\documentclass[11pt]{article} 
\usepackage{a4,amssymb, amsthm, dsfont}
\usepackage{amsmath}
\usepackage{mathrsfs,amsfonts}
\usepackage{graphics, epsfig, subfig, xcolor}
\usepackage[mathscr]{euscript} 
\DeclareMathAlphabet{\mathpzc}{OT1}{pzc}{m}{it}
\usepackage{bbm, mathtools, nicefrac, float}
\usepackage[normalem]{ulem}
\usepackage{enumerate}
\DeclareMathOperator*{\esssup}{ess\,sup}
\usepackage[shortlabels]{enumitem}
\newtheorem{thm}{Theorem}[section]
\newtheorem{cor}[thm]{Corollary}
\newtheorem{lem}[thm]{Lemma}
\newtheorem{prop}[thm]{Proposition}
\newtheorem{defn}[thm]{Definition}
\newtheorem{rem}[thm]{Remark}
\newtheorem{assum}[thm]{Assumption}
\numberwithin{equation}{section}

\renewcommand{\theequation}{\arabic{section}.\arabic{equation}}

\newcommand{\I}{\,\mathrm d}

\begin{document}	
	\title{  Analysis of a Model for Electrical Discharge in MEMS	
}
	\author{Heiko Gimperlein\thanks{Engineering Mathematics, University of Innsbruck, Technikerstra\ss e 13, 6020 Innsbruck, Austria} \and  Runan He\thanks{Instituto de Ciencias Matem\'{a}ticas (ICMAT), Calle Nicolas Cabrera 13, 28049 Madrid, Spain} \and  Andrew A.~Lacey\thanks{Maxwell Institute for Mathematical Sciences and Department of Mathematics, Heriot-Watt University, Edinburgh, EH14 4AS, United Kingdom}}
	\date{\vspace*{-0.8cm}}	
	\maketitle
	\abstract{\noindent We study the local well-posedness of the solution to a coupled nonlinear elliptic-parabolic system
 which models electrical discharge in a Micro-Electro-Mechanical System (MEMS).	
A simple MEMS capacitor device contains  two  plates acting as the capacitor's electrodes,
one of which is flexible, and which are separated by a narrow gas-filled  gap. 
In the event of the flexible plate approaching the other, electrical discharge can occur.
This is modelled here by two parabolic equations, for densities of electrons and positive
ions, and an elliptic equation for electric potential.
We show the local-in-time existence of a weak solution for the coupled system.
Compactness techniques, used previously in the study of drift-diffusion equations,
are employed in our proof.

}\vskip 0.8cm

%%%%%%%%%%%%%%%%%%%%%%%%%%%%%%%%

	\section{Introduction}	

%%%%%%%%%%%%%%%%%%%%%%%%%%%%%%%%

The present paper studies a model for electrical discharge 
in a micro-electro-mechanical system (MEMS) capacitor.
In particular, we shall show the existence, uniqueness and regularity, for some finite time,
of the solution to the following  elliptic-parabolic system:
\begin{subequations}\label{EPSys}
	\begin{equation}\label{ellip_eq}
		-\Delta \phi=(p-n)/\epsilon_0,\quad (x,y,t)\in\Omega\times(0,\infty),
	\end{equation}
	\begin{equation}\label{parab_eq_pos}
		\frac{\partial p}{\partial t} - \nabla\cdot \left( \epsilon_+\nabla p
		+ p\left( \mu_+\nabla \phi-\mathbf{v} \right) \right)
		= F(n,|\nabla \phi|),\quad (x,y,t)\in\Omega\times(0,\infty),
	\end{equation}
	\begin{equation}\label{parab_eq_neg}
		\frac{\partial n}{\partial t} - \nabla\cdot \left( \epsilon_-\nabla n
		- n \left( \mu_-\nabla \phi + \mathbf{v} \right) \right) =
		F(n,|\nabla\phi|),\quad (x,y,t)\in\Omega\times(0,\infty),
	\end{equation}
	\begin{equation}\label{nonlinearity}
		F(n,|\nabla\phi|) = \mu_-n|\nabla\phi| \left( \alpha_1e^{-\alpha_2/|\nabla\phi|}
		-\eta_0 \right) ,
	\end{equation}
\end{subequations}
subject to the initial conditions
\begin{equation}\label{inival}
	p(x,y,0)=p_0(x,y),\quad n(x,y,0)=n_0(x,y),\quad (x,y)\in\Omega,
\end{equation}
and boundary  conditions
\begin{subequations}\label{bdyval}
	\begin{equation}\label{Dirichletbdy}
		\phi(x,y,t)=\phi_D(x,y),\quad p(x,y,t)=p_D(x,y),
		\quad n(x,y,t)=n_D(x,y),\quad  
		(x,y,t)\in \partial\Omega_D\times(0,\infty),  
	\end{equation}
	\begin{equation}\label{Neumannbdy}
		\nu\cdot\nabla \phi=\nu\cdot \nabla p =
		\nu\cdot \nabla n=0,\quad (x,y,t)\in \partial\Omega_N\times(0,\infty).
\end{equation}\end{subequations}
The unknown functions $\phi(x,t)$, $p(x,t)$ and $n(x,t)$ correspond, 
respectively, to electrostatic potential % ($-\nabla\phi$ is the electric field), 
and to concentrations of positively charged ions and negatively charged electrons. 
The region $\Omega\subseteq\mathbb{R}^d$, with $d=2$ or 3, is  bounded,
represents the fluid-filled part of the interior of a MEMS device,
and has boundary $\partial\Omega$ which is
assumed to consist of a Dirichlet part $\partial\Omega_D$ 
and a Neumann part $\partial\Omega_N$: 
$\partial\Omega=\overline{\partial\Omega_D\cup \partial\Omega_N}$ 
and $\partial\Omega_D\cap \partial\Omega_N=\emptyset$. 
The unit outward normal vector on the boundary $\partial\Omega$ is $\nu$.

The quantities $\epsilon_\pm$, $\mu_\pm$,  $\alpha_1$, $\alpha_2$ and $\eta_0$ 
appearing in problem (\ref{EPSys}) are all  positive constants,
while $p_0$, $n_0$, $\phi_D$, $p_D$, $n_D$   and  $\mathbf{v}$
are given functions, with the last denoting the velocity of gas in the MEMS capacitor. The flow is incompressible, 
so that $\nabla\cdot \mathbf{v} = 0$ in $\Omega$,
and tangential to the boundary, $\nu \cdot \mathbf{v}=0$ on $\partial\Omega$.

\

Previous works on MEMS capacitors such as  recent papers  \cite{gimperlein2022quenching}, 
\cite{HeikoGimperlein2023DiscreteandContinuousDynamicalSystems}, 
 \cite{gimperlein2024existence}, 
\cite{gimperlein2024wellposedness}  and earlier  works \cite{Duong2019}, 
\cite{filippas1993quenching}, \cite{guo2000quenching}, \cite{kawohl1996remarks}
have studied the mechanical problem
of how one of the electrodes 
in a capacitor (see Fig.~\ref{MEMS}) can deform
and approach the other. This paper is now a first step in examining
how the electric field can behave when electrodes come together, in
so called ``touch-down'', allowing for electrical discharge between 
the two electrodes, but taking electrode position and fluid motion
as being known {\it a-priori}.

\

More is said about the physical significance of the problem in the following
subsection, Subsec.~\ref{subsec.physmot}.

\

We shall then prove the following well-posedness result which applies 
for  some  finite time $T_1\in (0, \infty)$:
\begin{thm}\label{mainthm}
Under Assumption \ref{main_assum} below,
	 there exists a unique weak solution 
	\[\begin{split}
		\phi\in L^\infty\left((0, T_1); H^2(\Omega)\right),\quad p,~ n\in L^\infty\left((0, T_1); L^2(\Omega)\right)\cap L^2\left((0, T_1); H^1(\Omega)\right) \cap H^1\left((0, T_1); H^{-1}(\Omega)\right),	\end{split}\]
of the problem \eqref{EPSys}-\eqref{bdyval} on a time interval $(0, T_1)$ and which satisfies $p\geq 0$ and $n\geq0$  almost everywhere.
\end{thm}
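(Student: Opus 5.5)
We prove the theorem with a Schauder (or Leray--Schauder) fixed-point argument on a short time interval, combined with energy estimates and Aubin--Lions compactness, as in the drift-diffusion literature. Uniqueness follows from a Gronwall argument. We first reduce to homogeneous Dirichlet data by subtracting suitable $H^2$ extensions of $\phi_D, p_D, n_D$, which Assumption \ref{main_assum} should guarantee, along with $p_0,n_0\in L^2(\Omega)$, $\mathbf v\in L^\infty$ with $\nabla\cdot\mathbf v=0$ and $\nu\cdot\mathbf v=0$, and enough boundary regularity for $H^2$ elliptic regularity of the mixed problem.

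For fixed $T>0$ and $R>0$, let
\[
K=\Bigl\{(\tilde p,\tilde n)\in L^2((0,T);L^2(\Omega))^2:\ \|\tilde p\|_{L^\infty L^2}+\|\tilde n\|_{L^\infty L^2}\le R\Bigr\}.
\]
The map $\mathcal T$ on $K$ is built in three steps.
\begin{enumerate}[(i)]
\item Solve \eqref{ellip_eq} with right-hand side $(\tilde p-\tilde n)/\epsilon_0$. Elliptic regularity gives $\|\phi\|_{H^2}\le C(1+\|\tilde p\|_{L^2}+\|\tilde n\|_{L^2})$. Since $d\le3$, Sobolev embedding gives $\nabla\phi\in L^\infty((0,T);L^6(\Omega))$.
\item With $\phi$ frozen, solve the now linear parabolic equations \eqref{parab_eq_pos}--\eqref{parab_eq_neg}. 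We replace $F(n,|\nabla\phi|)$ by the linear term $\mu_- n|\nabla\phi|g(|\nabla\phi|)$, where $g(s)=\alpha_1e^{-\alpha_2/s}-\eta_0$ is bounded. We use the truncation $n^+$ in the drift and source terms to preserve positivity.
\item Existence for the linear problem comes from Galerkin approximation, or Lions' theorem. The coefficients are admissible because $|\nabla\phi|\in L^6$ and $p\nabla\phi$ can be controlled in $L^2$ via $\|p\|_{L^3}\|\nabla\phi\|_{L^6}$.
\end{enumerate}

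The energy estimate is the core step. Testing the $p$-equation with $p$, the $\mathbf v$ term vanishes by incompressibility and tangency. For the drift term we use
\[
\Bigl|\int p\nabla\phi\cdot\nabla p\Bigr|\le\|\nabla\phi\|_{L^6}\|p\|_{L^3}\|\nabla p\|_{L^2}\le \|\nabla\phi\|_{L^6}\|p\|_{L^2}^{1/2}\|\nabla p\|_{L^2}^{3/2}\ (+\text{l.o.t.}),
\]
by Gagliardo--Nirenberg in $d\le3$. Young's inequality then gives a bound $\frac{\epsilon_+}{2}\|\nabla p\|^2+C\|\nabla\phi\|_{L^6}^4\|p\|^2$. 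The source term is handled the same way, using $\bigl|\int n|\nabla\phi|g\,p\bigr|\le C\|\nabla\phi\|_{L^6}\|n\|_{L^3}\|p\|_{L^2}$. Gronwall then yields
\[
\|p\|_{L^\infty L^2}^2+\|p\|_{L^2H^1}^2\le (\|p_0\|^2+\|n_0\|^2+C)\,e^{C(1+R)^4T},
\]
and likewise for $n$. Choosing $R$ large and then $T=T_1$ small makes this at most $R$, so $\mathcal T(K)\subset K$. This step, absorbing the quadratic drift nonlinearity so that the bound closes only for short time, is the main obstacle and dictates that the result be local. The equations also give a bound on $\partial_tp,\partial_tn$ in $L^2((0,T);H^{-1})$, since the fluxes lie in $L^2L^2$. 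Aubin--Lions then shows $\mathcal T(K)$ is relatively compact in $L^2L^2$. Continuity of $\mathcal T$ follows from uniqueness of the linear problems and these compactness bounds, with $\nabla\phi$ converging strongly in $L^2L^6$-type norms and $g$ continuous. Schauder's theorem then provides a fixed point.

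Nonnegativity follows by testing with $p^-=\min(p,0)$ and $n^-$, using $p_0,n_0,p_D,n_D\ge0$. For $n$, the truncated terms vanish on $\{n<0\}$, so $\|n^-(t)\|^2$ satisfies a homogeneous Gronwall inequality and $n\ge0$. The $p$-equation has source $F(n^+,\cdot)$; its nonnegative part is harmless, and with the $\eta_0$ term treated as a bounded linear coefficient after the same estimate, one gets $p^-=0$. Hence the truncation can be removed, and $\phi\in L^\infty H^2$ follows from (i).

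For uniqueness, take two solutions, subtract, and test the difference equations with $p_1-p_2$ and $n_1-n_2$. The elliptic estimate gives $\|\nabla(\phi_1-\phi_2)\|_{L^6}\le C\|(p_1-p_2,n_1-n_2)\|_{L^2}$. Since $g(s)s$ is Lipschitz in $s$, the difference of the $F$ terms is controlled by $|n_1-n_2||\nabla\phi_1|+n_2|\nabla(\phi_1-\phi_2)|$. Using the regularity $n_i,p_i\in L^2H^1\subset L^2L^6$ and the same Gagliardo--Nirenberg and Young argument, Gronwall yields that the difference vanishes.
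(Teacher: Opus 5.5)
Your Schauder/Gagliardo--Nirenberg scheme for existence and your Gronwall argument for uniqueness are sound. The step ``one gets $p^-=0$'', however, fails.

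When you test the $p$-equation with $p^-=\min(p,0)$, the recombination part of $F(n^+,|\nabla\phi|)$ contributes $-\mu_-\eta_0\int_\Omega n^+|\nabla\phi|\,p^-\,\mathrm{d}x\ge0$ to the right-hand side. This term is linear in $p^-$, but its coefficient is $n^+|\nabla\phi|$, not $p^-$. It is therefore an inhomogeneous forcing, and you only get $\frac{\mathrm{d}}{\mathrm{d}t}\|p^-\|_{L^2}^2\le C\|p^-\|_{L^2}^2+C\|\nabla\phi\|_{L^6}\|n\|_{L^3}\|p^-\|_{L^2}$, which does not force $p^-\equiv0$. Your argument for $n$ works precisely because there the source is proportional to $n$ itself.

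No estimate can repair this for the equation as written. Suppose $p_0=0$ on a ball $B\Subset\Omega$ on which $n_0>0$ and $\alpha_1e^{-\alpha_2/|\nabla\phi(\cdot,0)|}<\eta_0$; note that $\nabla\phi(\cdot,0)\not\equiv0$ on $B$, since $\Delta\phi(\cdot,0)=n_0/\epsilon_0>0$ there. Test with a nonnegative $f\in C_c^\infty(B)$, $f\not\equiv0$, and use continuity in time of $p,n$ in $L^2(\Omega)$ and of $\phi$ in $H^1(\Omega)$. This gives $\int_\Omega p(t)f\,\mathrm{d}x=t\,\mu_-\int_B n_0|\nabla\phi(0)|\left(\alpha_1e^{-\alpha_2/|\nabla\phi(0)|}-\eta_0\right)f\,\mathrm{d}x+o(t)<0$ for small $t>0$. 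So for such admissible data no weak solution of the original system has $p\ge0$.

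This is exactly where the paper's proof does something you do not. It proves positivity only for the auxiliary problem, in which the recombination term of the $p$-equation is replaced by $\mu_-\eta_0|\nabla\phi^M|\min\{M,-Mn^M/(1+Mp^M)\}\,p^M$ (see \eqref{Aux_nonlinearity_p}). That term is proportional to $p^M$ with coefficient at most $M$. Testing with $p^M_-$ then gives a homogeneous Gronwall inequality with zero initial value (Lemma \ref{Aux_zero_neg_pn}), and $p\ge0$ survives the strong $L^2$ limit $M\to\infty$.

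Be aware, though, that $Mn^Mp^M/(1+Mp^M)\to n$ only on $\{p>0\}$. The limit therefore reproduces the sink $-\mu_-\eta_0 n|\nabla\phi|$ only where $p>0$. By the example above, the paper's claim $p\ge0$ for the unmodified system runs into the same obstruction at this passage to the limit.

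Apart from this point, your route is genuinely different and in some respects cleaner. Your fixed-point bound closes for short time with no sign information. The paper's $M$-uniform estimates instead need positivity to discard $\int(|p^M|^2-|n^M|^2)G$ and $J_2$, followed by a Bihari inequality and a Stampacchia iteration. So your argument does yield local existence and uniqueness of a weak solution with $n\ge0$. What it cannot deliver, and what is false in general for the stated $F$, is $p\ge0$.
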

\begin{rem}
    {The restriction of the theorem to  a local time stems from} the non-linearities $p\nabla\phi$ 
    and $n\nabla\phi$ on the right-hand sides of \eqref{parab_eq_pos} and \eqref{parab_eq_neg}. 
    The proof of Theorem \ref{mainthm} implies that  blow-up singularities 
    with $\|p(t)\|_{L^2(\Omega)}\to\infty$ and $\|n(t)\|_{L^2(\Omega)}\to\infty$ 
    might develop as $t\to T_1$,  for a finite existence time $T_1<\infty$. 
    Future work will study how blow-up can occur in the system, determine the blow-up profile
    and examine stability of a blowing-up solution. 
\end{rem}

% &
\subsection{Physical Motivation} \label{subsec.physmot}
% %

A MEMS capacitor,  schematically drawn in Figure \ref{MEMS}, 
contains two conducting plates $A$ and $B$ which, 
when the device is uncharged and at equilibrium, 
are close and parallel to each other. More generally,  
the two plates lie inside a sealed box % (surroundings C) 
also containing a rarefied gas, as opposed to	there being a perfect vacuum 
(see  \cite{gimperlein2022quenching}, \cite{HeikoGimperlein2023DiscreteandContinuousDynamicalSystems}, 
 \cite{gimperlein2024existence}, 
\cite{gimperlein2024wellposedness} for various models of MEMS). 
		\begin{figure}[b!]
			\begin{center}
				\scalebox{0.6}{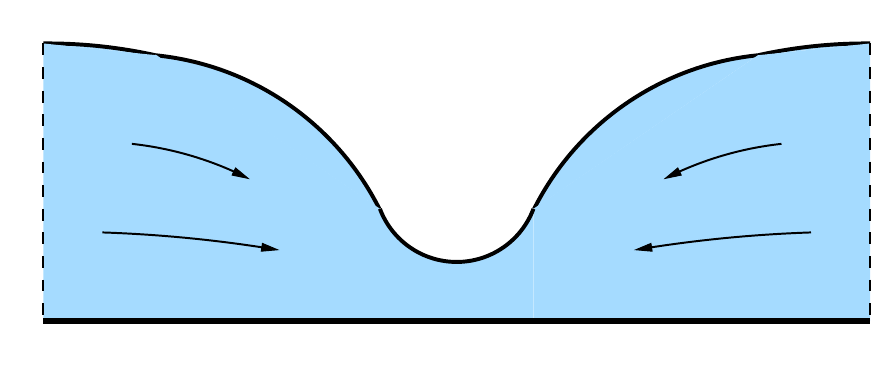}
			\end{center}	
			\caption{Sketch of a MEMS capacitor undergoing sparking.
			The shaded region is $\Omega$; the electrodes $A$ and $B$
			constitute the Dirichlet part of the boundary, $\partial\Omega_D$,
			on which electric potential $\phi$ is known and the concentrations
			$p$ and $n$ are assumed known; the artifical boundary $C$ is supposed
			sufficiently distant for homogeneous Neumann condtions to apply,
			so $C = \partial\Omega_N$.}
			\label{MEMS}
		\end{figure}
		We suppose that the deformable electrode $A$ is at a  positive constant potential $V$ 
		and fixed electrode $B$ is at zero potential, 
		which results in an electric-potential distribution $\phi$ between two electrodes $A$ and $B$ so that 
		\[\phi\mid_{A}=V\quad\text{and}\quad\phi\mid_B=0 .
\]
		Here $A$ is represented by $A=\{(x,y)\in\mathbb{R}^{\tilde{d}}\times\mathbb{R}_+: y=w(x)\}$,  
        % $w(x)=|x|^{4/3}$ is a curve modelling the deformed electrode $A$ under the potential difference of nearly touch-down  
 $B=\{(x,0)\in\mathbb{R}^{\tilde{d}}\times\{0\}\}$ and $\tilde{d}=1,~2$. 
Under a sufficiently large voltage $V$, the upper electrode deflects towards 
the fixed electrode, coming at least close to touching: %nearly touching it: 
``touch-down'' can occur. 
Previous work has focused on the mechanical problem, taking the
electric potential to be linear in the distance across the gap. 
For such models, near touch-down, assuming symmetry, the gap width $w(x) \sim \mathrm{const.} |x|^{4/3}$ 
(see \cite{gimperlein2022quenching} and \cite{TNLK}), for small $x$. 
Hence, around touch-down, say near $x=0$, a locally high electric field can cause electrical discharge 
and sparking across the gap between the electrodes, causing the approximation used for 
the electric potential to fail locally. 
It is then that we must consider the fuller electrical problem for discharge that is subject of the current paper,
taking the position of $A$, and the fluid velocity, $\mathbf{v}$,
as given. 

Since we are focusing on situations near touch-down,
the small gap width $w$ and large sizes of derivatives of $w$ mean that some simplifications 
which are often used in  modelling discharge through gases 
cannot be guaranteed to be valid in our problem.
   		
		Without extremely fast changes of quantities with respect to time $t$, an electrostatic approximation should be valid, and we can take  the electric field to be given by
		\[\mathbf{E}=-\nabla\phi.\]
		We also suppose that the charges on both a positive ion and an electron have size one,
so, with positive-ion density $p$ and electron density $n$, corresponding charge densities are $p$ and $-n$
($p$ and $n$ are, of course, both non-negative). 
It follows that the potential distribution  $\phi$ satisfies the following elliptic equation by Gauss's law:
		\[-\Delta\phi=\frac{p-n}{\epsilon_0},\]
		where $\epsilon_0$ is  the electric permittivity of free space.

\
		
		The charged  particles move under the influence of the electric field, 
		the effect of diffusion and through  motion of the % effectively incompressible gas with v 
gas which has some velocity $\mathbf v$.
These effects produce the current densities $\mathbf{j}_+$ and $\mathbf{j}_-$, given by 
		\[\mathbf{j}_+=\left(\mu_+\mathbf{E}+\mathbf{v}\right)p-\epsilon_+\nabla p,\]\[\mathbf{j}_-=\left(\mu_-\mathbf{E}-\mathbf{v}\right)n+\epsilon_-\nabla n.\]
		Here $\mu_\pm$ are the respective constant mobilities of the positive ions and the electrons, and $\epsilon_\pm$ are the coefficients of ion and electron diffusion, all regarded as constants. 

Although in previous studies of related MEMS mechanical problems, gas velocity has been an unknown, in this preliminary
study of the electrical problem we regard it as given: $\mathbf v = \mathbf{v} (x,y,t)$. Since we regard the gas as being
effectively incompressible, this function should satisfy $\nabla \cdot \mathbf v = 0$.

In the ionised zone, i.e.~a ``corona'' or where electrical sparks occur, 
where $p$ and $n$ are positive, the  positive and negative charges 
are produced in the corona thanks to the collisions of the electrons with the air molecules
(``ionisation''),  and to other external influence (for example, photoionization and cosmic rays). 
The production rates of the two species are automatically equal. 
Here we take the net production rate, i.e.~disassociation rate less recombination rate, per unit volume to be given 
(at fixed temperature and pressure) by
\begin{equation}\label{eq:Fdef}
F(n,|\nabla\phi|) = \mu_-n|\nabla\phi| \left( \alpha_1e^{-\alpha_2/|\nabla\phi|}
		-\eta_0 \right) ,
\end{equation}
with $\alpha_1$, $\alpha_2$ and $\eta_0$ all physical constants (see formulas (2.6) and (2.7)
of \cite{budd1991coronas})
so that both rates vary linearly with electron density and with size of electric field, 
while the formation
rate also has some Arrhenius-type dependence on electric field.
Balance equations, modelling the ionization process  due to electron collisions, 
for the two densities
are then given by:
		\[\begin{split}
			\frac{\partial p}{\partial t}+\nabla\cdot\mathbf{j}_+ = F(n,|\nabla\phi|) , \\
			\frac{\partial n}{\partial t}-\nabla\cdot\mathbf{j}_- = F(n,|\nabla\phi|) . \end{split}\]

	\begin{rem}
		The term $\left( \alpha_1e^{-\alpha_2/|\nabla\phi|}
		-\eta_0 \right)$ is called ``the total ionization coefficient''. 
		(Again see \cite{budd1991coronas}.)
		Clearly, the total charge density  $q=p-n$ and the total current density  $\mathbf{j}=\mathbf{j}_++\mathbf{j}_-$ satisfy the  continuity equation 
		\[\frac{\partial q}{\partial t}+\nabla\cdot\mathbf{j}=0.\]
	\end{rem}

		 The region $\Omega$, $\left\{(x,y)\in\mathbb{R}^{\tilde{d}}\times\mathbb{R}_+,\ |x|\leq r,\ y\leq w(x)\right\}$,  is two- or 
three-dimensional, so $\tilde{d}=1$ or 2, and is that part of the gaseous interior containing the ionised region. Indeed, we take $r$
to be so large in comparison with the gap width in the narrowest part that flows of ions and gradients of $p$ and of $n$ are negligible,
and, likewise, the electric field is very small. We can then apply the Neumann conditions:
\[
\frac{\partial p}{\partial \nu}(x,y)=\frac{\partial n}{\partial \nu}(x,y)=0, \mbox{ and }
\frac{\partial\phi}{\partial \nu} = 0 \quad \text{on}\ C ,
\]
where $\nu$ will be the outward unit normal and $C$ is given by $|x| = r$, $0 < y < w(x)$.

We should note that a final consequence of $C$ being ``far'' from $x=0$ ($r \gg w(0)$),
is that the fluid velocity should also be small, so we must take $\nu \cdot \mathbf v = 0$ on $C$.

\

Where the region $\Omega$ is bounded by the electrodes, we must first have that the potential is fixed,
\[
\phi = \phi_D , \mbox{ where } \phi_D = V \mbox{ for $(x,y)$ on }A \mbox{ and }
 \phi_D = 0 \mbox{ for $(x,y)$ on }B  .
\]
In contrast to the homogenous Dirichlet boundary condition (2.3) in \cite{budd1990modeling}, where  no ions are emitted from the conductor surface itself, we here suppose that  positive ions, emitted from the surfaces of conductors $A$ and $B$, give rise an identical non-zero concentration.  Analogous to the boundary conditions for $n$ from  (2.16) of \cite{budd1991coronas} and  (2.11c) of \cite{budd1990modeling}, we then expect constant, positive values for both of the densities of the postive and negative charges, $p$ and $n$, on the two electrodes:
\[
		p=\theta_p,\quad n=\theta_n\quad  \text{on}\ A \mbox{ and on } B .
\]

As $\mathbf v$ is a velocity of the fluid within the MEMS device, and the 
fluid clearly cannot pass into or out of either stationary electrode $A$ or $B$,
the specified velocity field $\mathbf v (x,y,t)$ should also satisfy
\[
\nu\cdot \mathrm v = 0\quad  \text{on}\ A \mbox{ and on } B .
\]

	\begin{rem}
		Note that the Dirichlet part of the boundary is $\partial\Omega_D=A\cup B$ and $A\cap B=\emptyset$, while the Neumann part of the boundary is $\partial\Omega_N=C$.
	\end{rem}
	 
The boundary data in (\ref{bdyval}) is now just a generalisation.

\

	  The densities of the positive ions and electrons will depend on how the
capacitor is set up, so there will be some initial values:
	 \[
p|_{t=0}=p_0,\quad n|_{t=0}=n_0.
\]
Should there be a mismatch between initial and boundary values, 
unbounded gradients are likely to occur, so we
	 require $p_0$ and $n_0$ to be compatible with the boundary value conditions \eqref{bdyval} for $p$ and $n$: $p_0|_{\partial\Omega_D}=\theta_p$, $n_0|_{\partial\Omega_D}=\theta_n$, $\nu\cdot\nabla p_0|_{\partial\Omega_N}=\nu\cdot\nabla n_0|_{\partial\Omega_N}=0$, $p_0\geq 0$, $n_0\geq 0$ in $\overline{\Omega}$, i.e.~\ref{con2}, \ref{con3} in Assumption \ref{main_assum}. 

We then  obtain the elliptic-parabolic system \eqref{EPSys} subject to the initial  conditions \eqref{inival} and boundary  conditions \eqref{bdyval}. 

\subsection{Literature Review} \label{subsec.literature}

The elliptic-parabolic coupled system \eqref{EPSys} is introduced in Budd's work 
\cite{budd1991coronas}, which models a positive corona discharge formed in air 
between two infinitely long concentric cylinders, when a high electric field is applied to the air. 
It numerically studies the asymptotic behaviour of the discharge for large time; 
in addition, it  derives  the steady state solution numerically 
and shows that such a solution is not stable to numerical computations. 
However, the boundary conditions used in the paper \cite{budd1991coronas} 
cannot be applied to our MEMS case due to the different geometry. 
Rigorous existence of a solution to the corona discharge model 
is not shown in the paper \cite{budd1991coronas}. 

\

A similar elliptic-parabolic coupled system, modelling gas discharge, 
can be found in Islamov's papers, \cite{islamov2003global} and 
\cite{islamov2006regularity}. In these papers, 
the elliptic-parabolic system is  similar to \eqref{EPSys} 
but the velocity $\mathbf{v}$ of any bulk of motion of the gas 
is ignored and the source term on the right-hand side is replaced 
by $F=n(\alpha-\beta p)+\eta$, with $\alpha$ and $\beta$ 
representing the electron/neutral-particle impact ionization rate 
and  the electron/ion recombination coefficient respectively, 
and $\eta$ denoting  the external ionization rate. 
With  nonlinear Neumann boundary conditions, 
the paper \cite{islamov2003global} shows  global existence 
and uniqueness of a weak solution to the elliptic-parabolic system 
in terms of the suitable regularity of initial and boundary values. 
The reference \cite{islamov2006regularity} optimizes 
the regularity result from \cite{islamov2003global} 
and proves the global existence and uniqueness of a strong solution 
to the same elliptic-parabolic system subject to the same boundary value conditions 
but with  optimal regularity for the initial and boundary data.  

\

The model \eqref{EPSys} has much in common with the drift-diffusion model 
for semiconductors except for the absence of  the velocity $\mathbf{v}$  
of motion of the gas, different source terms $F$ and seniconductor models 
accounting for a doping profile on the right-hand side of elliptic equation 
\eqref{ellip_eq}; see, for example, \cite{mock1972equations}, 
\cite{mock1974initial}, \cite{mock1975asymptotic}. 
In particular, \cite{mock1974initial} considers a model similar to 
\eqref{EPSys} but without $\mathbf{v}$, and with $F$ given by the 
``Shockley Read Hall'', modelling recombination, as well
as dissociation, so that $F=c(pn-1)/(p+n+2)$, 
where $c$ is a positive constant. The well-posedness of such a system 
with Neumann boundary conditions is shown and a steady-state  
solution is shown to exist and be stable; { more precisely, 
\cite{mock1975asymptotic} shows that any perturbation of the steady  solution 
decays exponentially in time to the equilibrium}.
Similar work on the well-posedness of steady states but with more general boundary conditions 
can be found in \cite{mock1972equations}. 
{Considering such systems subject to  Dirichlet and Neumann boundary conditions, 
\cite{gajewski1984existence} shows global existence and uniqueness, and determines 
the asymptotic behaviour of solutions, by using a Lyapunov-function approach.}\\

For more on the drift-diffusion model with various recombination terms and various doping terms, 
see Markowich's books: \cite{markowich2012semiconductor}  for dynamic models 
and \cite{markowich1985stationary} for stationary models. 
Well-posedness of drift-diffusion models for  semiconductors 
with avalanche generation (where $F=\alpha_+(\phi)|J_+|+\alpha_
-(\phi)|J_-|$, $\alpha_\pm$ and $J_\pm$ denote the ionization rates 
and conduction current density for holes and electron respectively) 
and mixed Dirichlet-Neumann boundary value conditions can be found in  
\cite{frehse1994existence}, \cite{markowich1985nonlinear} 
and \cite{naumann1995existence}. 

\subsection{Overview and Structure of the Paper} \label{subsec.overview}

We provide the {first rigorous} proof of the existence and uniqueness 
of a solution to the system \eqref{EPSys} subject to initial 
and boundary conditions \eqref{inival}-\eqref{bdyval}. 
The proof relies on the compactness techniques  
for drift-diffusion equations developed, for example, 
by Islamov \cite{islamov2003global}, \cite{islamov2006regularity} 
and Naumann \cite{naumann1995existence}.

For the proof of Theorem \ref{mainthm}, we shall make regularity assumptions on the domain, initial-boundary data and function $\mathbf{v}$:
\begin{assum}\label{main_assum} 
	Let $d \in \{2,3\}$. We suppose that $\Omega$ is a bounded domain in $\mathbb{R}^d$ with boundary $\partial \Omega =\overline{ \partial\Omega_D \cup \partial\Omega_N}$. The disjoint open subsets $\partial\Omega_D, \partial\Omega_N \subset \partial\Omega$ are of class $C^{1,1}$ and and enclose an angle $\leq \frac{\pi}{2}$.  
	Data  $p_0$, $n_0$, $p_D$, $n_D$, $\phi_D$ are given such that $\phi_D\in H^2(\Omega)$ and $p_0$, $n_0$ $p_D$, $n_D \in W^{2,\sigma}(\Omega)$, $\sigma\geq3$, satisfying 
	\begin{enumerate}[label=(A\arabic*),ref=(A\arabic*)]
		\item\label{con1} $p_D\geq0$, $n_D\geq 0$ almost everywhere in ${\Omega}$ and 
		\[R_a\leq \inf_{\partial\Omega_D}p_D,~\inf_{\partial\Omega_D}n_D,\quad \sup_{\partial\Omega_D}p_D,~\sup_{\partial\Omega_D}n_D\leq R_b,\]
		\[{\nu\cdot\nabla p_D=\nu\cdot\nabla n_D=0\quad\text{on}\quad\partial\Omega_N}\]
		for some positive constants $R_a \leq R_b$; 
		\item\label{con2}   
		$p_0\geq0$, $n_0\geq 0$ almost everywhere in ${\Omega}$;		
		\item \label{con3} $p_0$ and $n_0$ are compatible with the boundary value conditions \eqref{bdyval}:
		\[p_0(x,y)=p_D(x,y),\ n_0(x,y)=n_D(x,y),\quad \forall~(x,y)\in \partial\Omega_D,\] \[\nu\cdot \nabla p_0 =\nu\cdot \nabla n_0=0,\quad \forall~(x,y)\in \partial\Omega_N.\]
	\end{enumerate}
	We further assume that {the velocity $\mathbf{v}\in L^\infty\left((0, T); H^2(\Omega)\right)$ is given  and $\nabla\cdot\mathbf{v}=0$.}
\end{assum}

Theorem \ref{mainthm} considers weak solutions to the problem, which we now define.
\begin{defn}\label{weak_soln}
A  \underline{weak solution} of problem \eqref{EPSys}-\eqref{bdyval} is a triple $(\phi, p, n)$ of functions such that:
\begin{enumerate}[label=(\alph*),ref=(\alph*)]
	\item\label{a} $p\geq 0$, $n\geq0$  almost everywhere in $\Omega\times(0, T)$, where $T>0$,
    \[\phi\in L^\infty\left((0, T); H^2(\Omega)\right),\]
	\[p,~ n\in L^\infty\left((0, T); L^2(\Omega)\right)\cap L^2\left((0, T); H^1(\Omega)\right) \cap H^1\left((0, T); H^{-1}(\Omega)\right);\]
	\item\label{b} for almost every $t \in (0, T)$, $\phi$ is a weak solution of \eqref{ellip_eq}, with right hand side in $L^2(\Omega\times(0, T))$;
	\item\label{c} the integral identities 
	\begin{subequations}\label{intform_sys}
		\begin{equation}\label{intform_p}
			\int_{0}^{T}\int_\Omega\left\{\left(\frac{\partial p}{\partial t}-
			\frac{\mu_+}{2}p\Delta\phi-F_1\right)f+\epsilon_+\nabla p\cdot\nabla f-
			\frac{\mu_+}{2}\left(f\nabla p-p\nabla f\right)\cdot\nabla\phi\right\}
			\I x \I y \I t=0,
		\end{equation}
		\begin{equation}\label{intform_n}
			\int_{0}^{T}\int_\Omega\left\{\left(\frac{\partial n}{\partial t}
			+\frac{\mu_-}{2}n\Delta\phi-F_2\right)g+\epsilon_-\nabla n\cdot\nabla g
			+\frac{\mu_-}{2}\left(g\nabla n-n\nabla g\right)\cdot\nabla\phi\right\}
			 \I x \I y \I t=0,
		\end{equation}
	\end{subequations}
	hold for arbitrary 	$f, ~ g\in H_D^1(\Omega)$, %:=\left\{u\in H^1(\Omega):\ u=0~ \text{a.e. on}~ \partial\Omega_D\right\},\]
    where $F_1$, $F_2$ are defined in terms of $F$ from \eqref{eq:Fdef};
    \[\begin{split}
        F_1=F_1(p, n, |\nabla\phi|)=F(n, |\nabla\phi|)-\nabla p\cdot\mathbf{v},\quad F_2=F_2( n, |\nabla\phi|)=F(n, |\nabla\phi|)-\nabla n\cdot\mathbf{v},
    \end{split}\]
    \[H_D^1(\Omega):=\left\{u\in H^1(\Omega):\quad u=0\quad\mbox{a.e. on}\quad\partial\Omega_D\right\};\]
	\item\label{d} the initial value conditions \eqref{inival} hold for almost every $(x,y)\in\Omega$, the boundary value conditions \eqref{bdyval}  for almost every $(x,y,t)\in\Omega\times(0, T)$.
\end{enumerate} 
\end{defn}

 Concerning the outline of this article, Section \ref{Sec_2} recalls versions of the Sobolev inequality, an elliptic
regularity estimate and compact embedding results in the specific forms that are repeatedly used later.
 In Section \ref{AuxSystem}, we obtain \textit{a priori} estimates 
 in Subsection \ref{AprioriEst_Aux}  and show the existence of 
 a weak solution to an auxiliary problem associated with an additional parameter $M$ constructed 
 from the original initial boundary value problem \eqref{EPSys}-\eqref{bdyval}  
 in Subsection \ref{Exist_Aux_Pro}. Because the non-negativity of $p$ 
 and $n$ has not been proved,  \textit{a priori} estimates cannot be obtained 
 from the original problem \eqref{EPSys}-\eqref{bdyval}, 
 without using additional assumptions on spatial dimension. 
 We then prove, in Section \ref{positivity}, 
 that the solution of the auxiliary problem is positive, 
 and, by using the sign of the recombination term 
 $\mu_-\eta_0n|\nabla\phi|$, \textit{a priori} estimates independent of $M$ are obtained.
 These estimates, which are uniform with respect to $M$, allow us to justify passage to the limit  
 and prove that there exists a weak solution of the problem \eqref{EPSys}-\eqref{bdyval} 
 in Subsection \ref{mainthmproof_exist}. 
 By a straightforward comparison of two solutions 
 the uniqueness of the weak solution follows in Subsection \ref{mainthmproof_unique}.  
%------------------------------------------------------------------------------

%%%%%%%%%%%%%%%%%%%%%%%%%%%%%%%%

\section{Preliminaries}\label{Sec_2}

%%%%%%%%%%%%%%%%%%%%%%%%%%%%%%%%

In this section we recall standard embedding and compactness results, which will be used in the following sections. 

%
%\subsection{Useful Inequalities}\label{subsec2-1}
%

%In this subsection, we list the following inequalities which
%might be used in the following sections without proof. 

The following inequalities are special cases of the
Sobolev embedding theorem and H\"{o}lder's inequality; see reference \cite{necas2011direct} 
for more details. 
\begin{prop}\label{Sob_emb_reltn} For either dimension $d=2$ with $k\in(1, \infty)$,
or dimension $d=3$ with $k\in(1, 6]$,  there exists a constant  
$c_0=c_0(k,\Omega)>0$ such that for all $f\in H^1_{D}(\Omega)$,
		\begin{equation}\label{Sob_emb}
			\left(\int_\Omega|f(x)|^k\mathrm{d}x\right)^{1/k}\leq c_0\left(\int_\Omega|\nabla f(x)|^2\mathrm{d}x\right)^{1/2}.
		\end{equation}
		Moreover, by fixing $v \in L^6(\Omega)$, 
		$w \in L^2(\Omega)$ and choosing $u \in L^3(\Omega)$, or $u \in L^{6/5}(\Omega)$, 
		for the $d=2$ and $d=3$ cases, respectively,
		 it follows that
\begin{equation}\label{Holder-Cauchy}
			\begin{split}
				\int_\Omega uvw \, \mathrm{d}x\leq \|u\|_{L^3(\Omega)}\|v\|_{L^6(\Omega)}\|w\|_{L^2(\Omega)}&\leq \varepsilon\|v\|_{L^6(\Omega)}^2+\frac{1}{4\varepsilon}\|w\|_{L^2(\Omega)}^2\|u\|_{L^3(\Omega)}^2,\\
				\int_\Omega uv \, \mathrm{d}x\leq \|u\|_{L^{6/5}(\Omega)}\|v\|_{L^6(\Omega)}&\leq \varepsilon\|v\|_{L^6(\Omega)}^2+\frac{1}{4\varepsilon}\|u\|_{L^{6/5}(\Omega)}^2.
			\end{split}
		\end{equation}
		Further, we have the continuous embedding 
		\begin{equation}\label{Sob_emb_H2}
			H^2(\Omega)\hookrightarrow C^{0, \alpha}(\overline{\Omega})\hookrightarrow L^\infty(\Omega),\quad \|u\|_{L^\infty(\Omega)},~\|u\|_{C^{0, \alpha}(\Omega)}\leq c_0\|u\|_{H^2(\Omega)}
			\end{equation} with $\alpha>0$.
\end{prop}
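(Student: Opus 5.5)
The three assertions are standard, so I would derive each from the Sobolev embedding theorem and H\"older's inequality as recorded in \cite{necas2011direct}. The work is in tracking exponents for $d=2,3$ and in justifying the Poincar\'e-type step on $H^1_D(\Omega)$.

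\emph{Inequality \eqref{Sob_emb}.} First, on $H^1_D(\Omega)$ the seminorm $\|\nabla f\|_{L^2(\Omega)}$ is a norm equivalent to the full $H^1$ norm. This is a Poincar\'e inequality, and I would prove it by the usual contradiction and compactness argument. Suppose $f_j\in H^1_D(\Omega)$ with $\|f_j\|_{L^2}=1$ and $\|\nabla f_j\|_{L^2}\to0$. By Rellich (valid since $\Omega$ is Lipschitz under Assumption \ref{main_assum}), a subsequence converges in $L^2$ to a constant $c$. Continuity of the trace on $H^1$ together with $\partial\Omega_D$ having positive surface measure forces $c=0$, which contradicts $\|c\|_{L^2}=1$. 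Second, I would apply the embedding $H^1(\Omega)\hookrightarrow L^k(\Omega)$. For $d=3$ it holds for $k\le 2^*=6$. For $d=2$ it holds for every finite $k$, and $k\in(1,2)$ follows from boundedness of $\Omega$ by H\"older. Composing the two gives \eqref{Sob_emb} with $c_0=c_0(k,\Omega)$.

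\emph{Inequalities \eqref{Holder-Cauchy}.} These follow from H\"older's inequality with exponents $\tfrac13+\tfrac16+\tfrac12=1$ in the first line and $\tfrac56+\tfrac16=1$ in the second. Then Young's inequality $ab\le\varepsilon a^2+\frac1{4\varepsilon}b^2$ is applied with $a=\|v\|_{L^6}$ and $b=\|u\|_{L^3}\|w\|_{L^2}$, respectively $b=\|u\|_{L^{6/5}}$. The dimension restriction is only there so that $v\in L^6$ is available via \eqref{Sob_emb} in both cases. Since $\Omega$ is bounded, the $L^3$ and $L^{6/5}$ hypotheses on $u$ are compatible with one another.

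\emph{Embedding \eqref{Sob_emb_H2}.} For $d\le3$ we have $2-d/2>0$, so Morrey/Sobolev gives $H^2(\Omega)\hookrightarrow C^{0,\alpha}(\overline\Omega)$ for some $\alpha\in(0,1/2]$. A concrete route is $H^2\hookrightarrow W^{1,6}$ followed by $W^{1,6}\hookrightarrow C^{0,1/2}$ in $d=3$. This requires an extension operator, which exists because $\Omega$ is Lipschitz. The trivial bound $\|u\|_{L^\infty}\le\|u\|_{C^{0,\alpha}}$ completes the chain, and enlarging $c_0$ lets one constant serve all three estimates.

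The only nonroutine point is the Poincar\'e step on the mixed-boundary space. That is where the assumption that $\partial\Omega_D$ is a relatively open, nonempty part of a Lipschitz boundary is actually used.
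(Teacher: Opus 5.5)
Your proposal is correct and takes essentially the same route as the paper, which gives no argument beyond attributing all three estimates to the Sobolev embedding theorem and H\"older's inequality, with a pointer to \cite{necas2011direct}. You make explicit two ingredients that this citation subsumes. The first is the Poincar\'e inequality on $H^1_D(\Omega)$, obtained by the compactness--trace argument using that $\partial\Omega_D$ has positive surface measure; this is what allows the right-hand side of \eqref{Sob_emb} to be just $\|\nabla f\|_{L^2(\Omega)}$. The second is the Lipschitz extension operator behind \eqref{Sob_emb_H2}.
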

{Thanks to Assumption \ref{main_assum} on the geometry of $\Omega$, 
we have the following standard result
from the theory of linear elliptic boundary value problems:} % (see, for example, Theorem 4.3 of \cite{gilbarg1977elliptic}), 
\begin{prop}\label{EllipticRegEst} 	 Let $f\in L^2(\Omega)$, $\phi_D\in H^{2}(\Omega)$. Then the Dirichlet problem $-\Delta u=f$ in $\Omega$  admits a unique solution $u\in H^{2}(\Omega)$ with $u-\phi_D \in H_D^{1}(\Omega)$, and the following estimate holds:
	\begin{equation}\label{elliptic_reg}
		\|u\|_{H^{2}(\Omega)}\leq c_0\left(\|f\|_{L^2(\Omega)}+\|\phi_D\|_{H^{2}(\Omega)}\right).
	\end{equation}
	
\end{prop}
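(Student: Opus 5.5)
The statement is the standard elliptic regularity result of Proposition \ref{EllipticRegEst}. I would prove it in three steps: reduce to homogeneous mixed boundary data, get a weak solution from Lax--Milgram, and then prove $H^2$ regularity, using the geometric hypotheses of Assumption \ref{main_assum} at the interface between $\partial\Omega_D$ and $\partial\Omega_N$.

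\textbf{Reduction and weak solvability.} Write $u=\phi_D+w$ with $w\in H^1_D(\Omega)$. Then $w$ should satisfy
\[
\int_\Omega \nabla w\cdot\nabla v \I x=\int_\Omega (f+\Delta\phi_D)\,v\I x-\int_{\partial\Omega_N}(\nu\cdot\nabla\phi_D)\,v \I s
\]
for all $v\in H^1_D(\Omega)$. This is the weak form of $-\Delta w=f+\Delta\phi_D$ with $w=0$ on $\partial\Omega_D$ and $\nu\cdot\nabla u=0$ on $\partial\Omega_N$. The boundary term makes sense because the trace of $\nabla\phi_D$ lies in $H^{1/2}(\partial\Omega)$.

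Since $\partial\Omega_D$ has positive surface measure, the Poincar\'e inequality (the case $k=2$ of \eqref{Sob_emb}) makes the bilinear form coercive on $H^1_D(\Omega)$. The right-hand side is a bounded functional of size $\lesssim \|f\|_{L^2}+\|\phi_D\|_{H^2}$. Lax--Milgram then gives a unique $w$ with
\[
\|w\|_{H^1}\le c\left(\|f\|_{L^2}+\|\phi_D\|_{H^2}\right).
\]
Uniqueness of $u$ follows directly from coercivity.

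\textbf{$H^2$ regularity away from the interface.} Take a partition of unity. Away from $\overline{\partial\Omega_D}\cap\overline{\partial\Omega_N}$, the interior estimate, the pure Dirichlet estimate and the pure Neumann estimate for $C^{1,1}$ boundaries all apply. Each is obtained by difference quotients after locally flattening the boundary. They give $\|\chi w\|_{H^2}\le c\,(\|f\|_{L^2}+\|\phi_D\|_{H^2}+\|w\|_{H^1})$ for cutoffs $\chi$ supported away from the interface.

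\textbf{The main obstacle: the Dirichlet--Neumann interface.} Mixed problems are in general not $H^2$ near the junction. In the flat two-dimensional model, a junction angle $\omega$ produces singular functions $r^{\pi/(2\omega)}\sin(\pi\theta/(2\omega))$. These belong to $H^2$ exactly when $\pi/(2\omega)>1$, that is $\omega<\pi/2$, with $\omega=\pi/2$ as the borderline case. At $\omega=\pi/2$ there is a clean argument: even reflection across the Neumann face turns the Dirichlet--Neumann corner into a straight Dirichlet boundary, so the problem becomes a Dirichlet problem on a $C^{1,1}$-type piece. For $\omega<\pi/2$, I would use a Grisvard-type corner analysis (Kondrat'ev weighted spaces), whose leading exponent exceeds $1$.

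In $d=3$ the interface is a $C^{1,1}$ curve. I would apply difference quotients tangential to this edge to reduce to the two-dimensional wedge problem in the normal plane. This is the step I expect to be delicate, and in the paper I would cite Grisvard (\emph{Elliptic Problems in Nonsmooth Domains}) for it rather than reprove it. Summing the local estimates and absorbing $\|w\|_{H^1}$ through the Lax--Milgram bound yields \eqref{elliptic_reg}.
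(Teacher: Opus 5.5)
The paper gives no proof of this proposition; it quotes it as a standard fact of linear elliptic theory under the geometric part of Assumption~\ref{main_assum}. Your first two steps are fine: Lax--Milgram for $w=u-\phi_D\in H^1_D(\Omega)$, and local $H^2$ estimates away from $\overline{\partial\Omega_D}\cap\overline{\partial\Omega_N}$. The junction step, however, has a genuine gap, precisely in the borderline case $\omega=\pi/2$ that the assumption allows.

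The function you reflect is $w$, and its Neumann datum $\nu\cdot\nabla w=-\nu\cdot\nabla\phi_D$ need not vanish. Take the flat model $\{x>0,\ y>0\}$ with $\partial\Omega_D=\{y=0\}$ and $\partial\Omega_N=\{x=0\}$. The even reflection then satisfies $\Delta\tilde w=\widetilde{\Delta w}+2(\nu\cdot\nabla\phi_D)\,\delta_{\{x=0\}}$. This is a single-layer source, so you do not obtain a Dirichlet problem with $L^2$ right-hand side.

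Reflecting $u$ instead, which does satisfy $\nu\cdot\nabla u=0$, only moves the difficulty into the boundary data. The even extension of $\phi_D(\cdot,0)$ lies in $H^{3/2}$ only if $\int_0^\delta|\partial_x\phi_D(\sigma,0)|^2\,\mathrm d\sigma/\sigma<\infty$, and a general $\phi_D\in H^2(\Omega)$ violates this. The obstruction at $\omega=\pi/2$ is not the singular function $r\sin\theta=y$, which is smooth. It is the resonance between the first mixed exponent $\pi/(2\omega)=1$ and the linear part of the data, which generates $r\log r$ terms.

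This is not merely a weakness of the argument: \eqref{elliptic_reg} is false in the stated generality. In the same local model, let $z=x+iy$ and let $\chi$ be a radial cutoff with $\chi\equiv 1$ near $0$. Take $\phi_D=\chi\,x$ and
\[
U=\chi\Bigl(x-\tfrac{2}{\pi}\,\mathrm{Im}(z\log z)\Bigr),\qquad \mathrm{Im}(z\log z)=r\log r\,\sin\theta+r\theta\cos\theta .
\]
Then $U=\phi_D$ on $\theta=0$. Moreover $\partial_x\bigl(x-\tfrac{2}{\pi}\mathrm{Im}(z\log z)\bigr)=1-2\theta/\pi$ vanishes on $\theta=\pi/2$. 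Hence $U\in H^1$ is the unique weak solution with $U-\phi_D\in H^1_D(\Omega)$ and $f=-\Delta U$, which is smooth and supported in an annulus. Yet $\partial_x^2\,\mathrm{Im}(z\log z)=\mathrm{Im}(1/z)=-\sin\theta/r\notin L^2$ near $0$, so $U\notin H^2(\Omega)$. This configuration occurs in the paper's own geometry, where $B$ meets $C$ at a right angle.

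What is missing is a compatibility hypothesis. One option is $\nu\cdot\nabla\phi_D=0$ on $\partial\Omega_N$ near the junction, the analogue of what condition \ref{con1} imposes on $p_D,n_D$. Another is that $\phi_D$ be locally constant there; this is available in the application, because $\phi_D$ equals the constants $V$ and $0$ on $A$ and $B$. Under such a hypothesis $w$ has homogeneous mixed data, and your reflection argument goes through at $\omega=\pi/2$. For $\omega<\pi/2$ no condition is needed, because $1$ is not a mixed eigenvalue. In $d=3$ the same condition is required along any part of the edge where the angle equals $\pi/2$.
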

	We also recall the following nonlinear generalization of Gr\"{o}nwall's inequality, see e.g.~\cite{beckenbach2012inequalities} (Section 5, Chapter 4) and  \cite{bihari1956generalization}.
\begin{prop}\label{Bellman}
	Let $Y(t)$ and $F(t)$ be positive continuous functions in $a\leq t\leq b$ and $k\geq 0$, further $z(u)$ a non-negative non-decreasing continuous function for $u\geq 0$. Then the inequality 
	\begin{equation}\label{Bellmanineqy_1}
		Y(t)\leq k+K\int_{a}^{t} F(s)z(Y(s)) \I s,\quad\forall~ a\leq t\leq b,
	\end{equation}
	implies the inequality 
	\begin{equation}\label{Bellmanineqy_2}
		Y(t)\leq Z^{-1} \left(Z(k)+K\int_{a}^{t} F(s) \I s\right),\quad\forall~ a\leq t\leq b'\leq b,
	\end{equation}
	where $$Z(u)=\int_{u_0}^{u}\frac{\mathrm{d}\tau}{z(\tau)},\quad u_0>0,\quad u\geq0,$$
	and $Z^{-1}(u)$ is the inverse function of $Z(u)$.
\end{prop}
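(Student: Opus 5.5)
The plan is the classical Bihari argument: majorize $Y$ by the right-hand side of \eqref{Bellmanineqy_1}, which is absolutely continuous, and integrate a differential inequality for it. Implicit in the statement is that $z>0$ on $(0,\infty)$, since otherwise $Z$ is not defined. Under this assumption $Z$ is continuous and strictly increasing on $(0,\infty)$, so $Z^{-1}$ exists on the range of $Z$. The number $b'$ is chosen so that $Z(k)+K\int_a^t F(s)\I s$ stays in that range for $a\le t\le b'$. We take $K\ge 0$; for $K<0$ the claim is trivial after the same comparison.

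First assume $k>0$. Define
\[
R(t):=k+K\int_a^t F(s)\,z(Y(s))\I s .
\]
This function is $C^1$ on $[a,b]$ because $F$, $Y$ and $z$ are continuous. It satisfies $R(a)=k$ and $R\ge k>0$, and $Y(t)\le R(t)$ by hypothesis. Since $z$ is non-decreasing and $F\ge 0$,
\[
R'(t)=K F(t)\,z(Y(t))\le K F(t)\,z(R(t)).
\]
Because $z(R(t))>0$, we may divide by it. By the chain rule this gives
\[
\frac{\mathrm d}{\mathrm dt}Z(R(t))=\frac{R'(t)}{z(R(t))}\le K F(t).
\]
Integrating from $a$ to $t$ yields $Z(R(t))\le Z(k)+K\int_a^t F(s)\I s$. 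Applying the increasing map $Z^{-1}$, which is allowed for $t\le b'$, gives $Y(t)\le R(t)\le Z^{-1}\bigl(Z(k)+K\int_a^tF(s)\I s\bigr)$. This is \eqref{Bellmanineqy_2}.

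If $k=0$, the hypothesis \eqref{Bellmanineqy_1} also holds with $k$ replaced by any $\delta>0$. The previous step then gives the bound with $Z(\delta)$ in place of $Z(k)$. Letting $\delta\downarrow 0$ and using continuity and monotonicity of $Z$ and $Z^{-1}$ recovers \eqref{Bellmanineqy_2}, interpreting $Z(0)$ as the limit $\lim_{\delta\downarrow0} Z(\delta)$ whenever it is finite.

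The only delicate point is the bookkeeping of the domain of $Z^{-1}$, which is what forces the restriction to $t\le b'$. The differential inequality itself is routine once we work with the smooth majorant $R$ rather than with $Y$, which is merely continuous.
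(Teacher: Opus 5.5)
Your argument for $K\ge 0$ is correct and is the classical Bihari proof: majorize $Y$ by $R(t)=k+K\int_a^t F\,z(Y)\,\mathrm{d}s$, use $R'\le KF\,z(R)$, and integrate $R'/z(R)$. The paper does not prove this proposition itself; it cites Beckenbach--Bellman and Bihari, where exactly this argument appears, so there is no difference of route to compare.

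One claim in your write-up is wrong, however. For $K<0$ the statement is not ``trivial after the same comparison''. Multiplying $z(Y)\le z(R)$ by $KF<0$ gives $R'\ge KF\,z(R)$, which is the wrong direction, and the conclusion in fact fails.

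A counterexample: take $z(u)=u$, $F\equiv 1$, $a=0$, $b=1$, $k=1$, $K=-1$, so that $Z^{-1}(Z(1)-t)=e^{-t}$. Let $Y=\varepsilon$ on $[0,1-\varepsilon]$, rising linearly to $Y(1)=1/2$. Then $\int_0^t Y\,\mathrm{d}s\le 2\varepsilon$, so $Y(t)\le 1-\int_0^tY\,\mathrm{d}s$ holds for small $\varepsilon$. Yet $Y(1)=1/2>e^{-1}$.

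So $K\ge 0$ has to be a hypothesis, not a case you can dismiss. This matches Bihari's lemma and the paper's application, where $K=\tilde{H}_6>0$.

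Separately, your $\delta\downarrow 0$ treatment of $k=0$ is unnecessary, and it leaves the case $Z(0^+)=-\infty$ unaddressed. It is simpler to note that the hypothesis at $t=a$ gives $0<Y(a)\le k$, so $k=0$ cannot occur.
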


We finally note embedding and compactness results, and in particular the Aubin-Lions lemma, see e.g. \cite{evans2022partial} (Sec. 5.9), ~\cite{lions1969quelques} (Theorem 5.1 of Chap.~1) and \cite{lions1983contrôle}. %~\cite{islamov2003global} \textcolor{red}{(HG: Do we have a better reference? Does Islamov refer to something?)}.\\
\begin{lem}[Aubin-Lions Lemma]\label{comp2}
	Let the triple $\mathfrak X_0$, $\mathfrak X$ and $\mathfrak X_1$ be  Banach spaces with $\mathfrak X_0\subseteq \mathfrak X\subseteq \mathfrak X_1$. Suppose that $\mathfrak X_0$ is compactly embedded in $\mathfrak X$ and that $\mathfrak X$ is continuously embedded in $\mathfrak X_1$. For $1\leq p,~ q\leq \infty$, let
	\[\mathcal{W}=\left\{u:\Omega\times(0, T)\to\mathbb{R}~\big|~u\in L^p\left((0, T); \mathfrak X_0\right),~ \frac{\partial u}{\partial t}\in L^q\left((0, T); \mathfrak X_1\right)\right\}.\]
    \begin{enumerate}
        \item If $p<\infty$ then the embedding of $\mathcal{W}$ into $ L^p\left((0, T);\mathfrak X\right)$ 	is compact.
        \item  If $p=\infty$ and $q>1$ then the embedding of $\mathcal{W}$ into $ C\left((0, T);\mathfrak X\right)$ 	is compact.
    \end{enumerate}
\end{lem}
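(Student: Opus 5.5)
This is a classical result, and I would prove it along the lines of Lions and Simon. The key tool is the \emph{Ehrling inequality}. Because $\mathfrak X_0\hookrightarrow\mathfrak X$ is compact and $\mathfrak X\hookrightarrow\mathfrak X_1$ is continuous, for every $\eta>0$ there is $C_\eta$ with
\[
\|u\|_{\mathfrak X}\le \eta\|u\|_{\mathfrak X_0}+C_\eta\|u\|_{\mathfrak X_1},\qquad u\in\mathfrak X_0 .
\]
I would prove this by contradiction. If it failed, there would be a sequence $u_k$ with $\|u_k\|_{\mathfrak X_0}=1$ and $\|u_k\|_{\mathfrak X}>\eta+k\|u_k\|_{\mathfrak X_1}$. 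By compactness a subsequence converges in $\mathfrak X$. Since $\|u_k\|_{\mathfrak X}$ is bounded, $\|u_k\|_{\mathfrak X_1}\to0$, so the limit is $0$ in $\mathfrak X_1$ and hence in $\mathfrak X$. This contradicts $\|u_k\|_{\mathfrak X}>\eta$. The effect of this inequality is that it suffices to prove compactness in the weaker space $\mathfrak X_1$: a bounded set in $L^p((0,T);\mathfrak X_0)$ that is Cauchy in $L^p((0,T);\mathfrak X_1)$ is also Cauchy in $L^p((0,T);\mathfrak X)$, up to an error of order $2\eta\sup\|u\|_{L^p(\mathfrak X_0)}$, which can be made arbitrarily small.

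For part 1 ($p<\infty$), take a bounded set $\mathcal B\subset\mathcal W$. I would aim for the Fréchet–Kolmogorov-type criterion of Simon. In that form it says that a set bounded in $L^p((0,T);\mathfrak X_0)$ whose time translates satisfy $\|u(\cdot+h)-u\|_{L^p((0,T-h);\mathfrak X_1)}\to0$ uniformly as $h\to0$ is relatively compact in $L^p((0,T);\mathfrak X)$. To verify the translate condition, write $u(t+h)-u(t)=\int_t^{t+h}\partial_tu\,\mathrm ds$. This gives $\|u(\cdot+h)-u\|_{L^1(\mathfrak X_1)}\le h\|\partial_tu\|_{L^1(\mathfrak X_1)}$, which is uniformly small since $q\ge1$. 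Moreover $u\in W^{1,1}((0,T);\mathfrak X_1)\subset L^\infty((0,T);\mathfrak X_1)$, so $\mathcal B$ is bounded in $L^\infty(\mathfrak X_1)$. Interpolating between $L^1$ and $L^\infty$ then gives uniform smallness of the translates in $L^p(\mathfrak X_1)$ for $p<\infty$.

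For the proof of Simon's criterion itself, I would use the averages $u_a(t)=\frac1a\int_t^{t+a}u\,\mathrm ds$. For fixed $a$, the values $u_a(t)$ are bounded in $\mathfrak X_0$, hence precompact in $\mathfrak X$, and they are equicontinuous in $t$. Arzelà–Ascoli then gives compactness of $\{u_a\}$ in $C([0,T-a];\mathfrak X)$. Finally, $u_a-u$ is controlled by the translates in $\mathfrak X_1$, and Ehrling upgrades this control to $\mathfrak X$. I expect this step to be the main technical obstacle. It involves the bookkeeping near the endpoint $t=T$ and the upgrade of the $\mathfrak X_1$-closeness of $u_a$ and $u$ to $\mathfrak X$-closeness in the $L^p$ norm. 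Both are handled by the Ehrling inequality with $\eta$ chosen first and $a$ afterwards.

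For part 2 ($p=\infty$, $q>1$), Hölder's inequality gives
\[
\|u(t)-u(s)\|_{\mathfrak X_1}\le |t-s|^{1-1/q}\|\partial_tu\|_{L^q(\mathfrak X_1)} .
\]
So $\mathcal B$ is uniformly equicontinuous into $\mathfrak X_1$; this is where $q>1$ is essential. Also, $\|u(t)\|_{\mathfrak X_0}$ is bounded for a.e. $t$, and hence for every $t$ after choosing the continuous representative. This last point uses a weak-$*$ lower semicontinuity argument, and I would note that it requires, for example, reflexivity of $\mathfrak X_0$, which holds in all our applications, such as $H^1$ and $H^2$. Consequently, for each $t$ the set $\{u(t)\}$ is precompact in $\mathfrak X$. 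Ehrling's inequality converts $\mathfrak X_1$-equicontinuity together with $\mathfrak X_0$-boundedness into $\mathfrak X$-equicontinuity. The vector-valued Arzelà–Ascoli theorem then yields compactness in $C([0,T];\mathfrak X)$.
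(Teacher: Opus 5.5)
\textbf{Part 1.} Your part 1 is correct and is essentially Simon's proof. It combines Ehrling's inequality, the translation criterion proved via the averages $u_a$ and Arzel\`a--Ascoli (with backward averages at the endpoint), and the $L^1$--$L^\infty$ interpolation of translates using $\partial_t u\in L^1((0,T);\mathfrak X_1)$. One detail: for $p=1$ the $\mathfrak X$-equicontinuity of the $u_a$ does not follow from the $L^1(\mathfrak X_0)$ bound. Take it instead from $\partial_t u_a=\frac1a\int_t^{t+a}\partial_t u\,\mathrm ds$ (or from the $\mathfrak X_1$-translates) together with Ehrling. The paper gives no proof of this lemma and only cites the literature. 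Lions' Theorem 5.1 assumes reflexive spaces and $1<p,q<\infty$, so the generality stated here is really Simon's compactness theorem, which is the route you take.

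\textbf{Part 2: the gap.} To apply Ehrling at every time you need $u(t)\in\mathfrak X_0$ with $\|u(t)\|_{\mathfrak X_0}\le R$ for \emph{all} $t$, where $R$ bounds the set in $L^\infty((0,T);\mathfrak X_0)$. You obtain this only by assuming $\mathfrak X_0$ reflexive, which the lemma does not assume. As written, you therefore prove part 2 only in the reflexive case.

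Without reflexivity the step itself is false. Take $\mathfrak X_0=C^1([0,1])$, $\mathfrak X=\mathfrak X_1=C([0,1])$ and $u(t)(x)=\sqrt{(x-1/2)^2+(t-T/2)^2}$. Then $u\in L^\infty((0,T);\mathfrak X_0)$ and $\partial_t u\in L^\infty((0,T);\mathfrak X_1)$. Yet the continuous representative has $u(T/2)(x)=|x-1/2|$, which is not in $\mathfrak X_0$.

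\textbf{Repair.} The conclusion still holds, and the repair does not even need Ehrling.
Let $K$ be the set of values $u(t)$, with $u$ in the bounded set and $t$ outside the null set where $\|u(t)\|_{\mathfrak X_0}>R$. Then $K$ is relatively compact in $\mathfrak X$. Its $\mathfrak X$-closure $\overline K$ is compact in $\mathfrak X$, hence compact and closed in $\mathfrak X_1$. By $\mathfrak X_1$-continuity and density of the good times, every value $u(t)$ lies in $\overline K$.

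On $\overline K$, the identity map from the $\mathfrak X_1$-metric to the $\mathfrak X$-metric is the inverse of a continuous bijection from a compact space onto a Hausdorff space. It is therefore continuous, and uniformly continuous on the compact set. Your H\"older bound in $\mathfrak X_1$ then gives uniform equicontinuity, and continuity of each $u$, in $\mathfrak X$. Arzel\`a--Ascoli concludes.

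Alternatively, follow Simon: approximate $u(t)$ in $\mathfrak X$, uniformly in $t$ and $u$, by the averages $\frac1h\int_t^{t+h}u\,\mathrm ds$. These lie in the $\mathfrak X_0$-ball of radius $R$ for every $t$, with no reflexivity needed.
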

\begin{rem}\label{comp1}
	By interpolation, it follows that  $L^\infty\left((0, T); L^2(\Omega)\right)\cap L^2\left((0, T); H^1(\Omega)\right)$ embeds continuously into  $L^\sigma(\Omega\times(0, T))$ for some finite $\sigma>0$. More details can be found in Corollary \ref{Cor_Aux_der_est}.
\end{rem}
%%%%%%%%%%%%%%%%%%%%%%%%%%%%%%%%

\section{An Auxiliary System and  \textit{A Priori} Estimates}\label{AuxSystem}

%%%%%%%%%%%%%%%%%%%%%%%%%%%%%%%%

In this section, we obtain \textit{a priori} estimates and prove the existence of a solution 
$\left(\phi^M, p^M, n^M\right)$ of an auxiliary system which is given as follows:
\begin{subequations}\label{Aux_EPSys}
	\begin{equation}\label{Aux_ellip_eq}
		-\Delta \phi^M=G\left(M, p^M-n^M\right)/\epsilon_0,\quad (x,y,t)\in\Omega\times(0, T),
	\end{equation}
	\begin{equation}\label{Aux_parab_eq_pos}
		\frac{\partial p^M}{\partial t}-\nabla\cdot \left(\epsilon_+\nabla p^M+\mu_+p^M\nabla \phi^M\right)=F_1\left(p^M, n^M,\left|\nabla \phi^M\right|\right),\quad (x,y,t)\in\Omega\times(0, T),
	\end{equation}
	\begin{equation}\label{Aux_parab_eq_neg}
		\frac{\partial n^M}{\partial t}-\nabla\cdot\left(\epsilon_-\nabla n^M-\mu_-n^M\nabla \phi^M\right) =F_2\left(p^M, n^M,\left|\nabla\phi^M\right|\right),\quad (x,y,t)\in\Omega\times(0,T),
	\end{equation}
	\begin{equation}\label{Aux_nonlinearity_p}
		\begin{split}F_1\left(p^M, n^M,\left|\nabla\phi^M\right|\right)&=\mu_-\left|\nabla\phi^M\right|\left(\alpha_1e^{-\alpha_2/\left|\nabla\phi^M\right|}n^M+\eta_0\min\left\{M, -\frac{Mn^M}{1+Mp^M}\right\}p^M\right)\\
			&-\nabla p^M\cdot\mathbf{v},\end{split}
	\end{equation}
	\begin{equation}\label{Aux_nonlinearity_n}
		\begin{split}F_2\left(p^M, n^M,\left|\nabla\phi^M\right|\right)&=\mu_-\left|\nabla\phi^M\right|\left(\alpha_1e^{-\alpha_2/\left|\nabla\phi^M\right|}n^M+\eta_0\min\left\{M, -\frac{Mp^M}{1+Mp^M}\right\}n^M\right)\\
			&-\nabla n^M\cdot\mathbf{v},\end{split}
	\end{equation}
\end{subequations}
\begin{equation}\label{Aux_inival}
	p^M(x,y,0)=p_0(x,y),\quad n^M(x,y,0)=n_0(x,y),\quad (x,y)\in\Omega,
\end{equation}
\begin{subequations}\label{Aux_bdyval}
	\begin{equation}\label{Aux_Dirichletbdy}
		\begin{split}
			\phi^M(x,y,t)=\phi_D(x,y),\quad (x,y,t)\in \partial\Omega_D\times(0,\infty),\qquad\qquad\quad\\ p^M(x,y,t)=p_D(x,y),\quad n^M(x,y,t)=n_D(x,y),\quad (x,y,t)\in \partial\Omega_D\times(0,\infty),
		\end{split}
	\end{equation}
	\begin{equation}\label{Aux_Neumannbdy}
		\nu\cdot\nabla \phi^M=\nu\cdot \nabla p^M =\nu\cdot \nabla n^M=0,\quad (x,y,t)\in \partial\Omega_N\times(0,\infty) ,
\end{equation}\end{subequations}
where the function $G(M,z)$ is defined for some $M>0$ through 
\begin{equation}\label{G_def}G(M, z)=\begin{cases}
	M\quad\text{for}\quad z>M\\
	z\quad\text{for}\quad |z|\leq M\\
	-M\quad\text{for}\quad z<-M.
\end{cases}\end{equation}
Note that we here fix extensions $p_D, n_D, \phi_D$ of the Dirichlet boundary data and denote them by the same symbols.

A weak solution of problem \eqref{Aux_EPSys}-\eqref{Aux_bdyval} is defined in the same way as in Definition \ref{weak_soln} except for the fact that $p^M$ and $n^M$ are not necessarily  non-negative and with the following modification of condition \ref{c}:
\begin{enumerate}
	\item[($c_M$)] the integral identities 
	\begin{subequations}\label{Aux_intform_sys}		
		\begin{equation}\label{Aux_intform_p}
			\begin{split}\int_{0}^{T}\int_\Omega\bigg\{\left(\frac{\partial p^M}{\partial t}
			-\frac{\mu_+}{2}p^M\Delta\phi^M-F_1\left(p^M, n^M,|\nabla\phi^M|\right)\right)f\quad\\
				+\epsilon_+\nabla p^M\cdot\nabla f-\frac{\mu_+}{2}
				\left(f\nabla p^M-p^M\nabla f\right)\cdot\nabla\phi^M\bigg\} \I x \I y \I t=0,\end{split}
		\end{equation}
		\begin{equation}\label{Aux_intform_n}
			\begin{split}\int_{0}^{T}\int_\Omega\bigg\{\left(\frac{\partial n^M}{\partial t}+\frac{\mu_-}{2}n^M\Delta\phi^M-F_2\left(p^M, n^M,|\nabla\phi^M|\right)\right)g\quad\\
				+\epsilon_-\nabla n^M\cdot\nabla g+\frac{\mu_-}{2}\left(g\nabla n^M-n^M\nabla g\right)
				\cdot\nabla\phi^M\bigg\} \I x \I y \I t=0,\end{split}
		\end{equation}
	\end{subequations}
hold for arbitrary functions 
	\[f,~ g\in  H_D^1(\Omega).\]
\end{enumerate}

\subsection{\textit{A Priori} Estimates for the Auxiliary System}\label{AprioriEst_Aux}

To show the existence of a weak solution to the auxiliary problem 
\eqref{Aux_EPSys}-\eqref{Aux_bdyval}, we use the standard approach 
for proving the existence result of linear evolution equation, 
see, for example, Section 2 of Islamov's paper \cite{islamov2003global}. 
To this end, we first establish some  \textit{a priori} estimates, 
as the following Lemma:
\begin{lem}\label{Aux_pro_a_priori_est}
	Under Assumption \ref{main_assum},
	a weak solution $\left(\phi^M, p^M, n^M\right)$ of problem \eqref{Aux_EPSys}-\eqref{Aux_bdyval} satisfies the following \textit{a priori} estimates:
	 \begin{equation}\label{aprioriest}
		\begin{split}			\left\|p^M\right\|_{L^\infty\left((0,T);L^2(\Omega)\right)}^2+\left\|n^M\right\|_{L^\infty\left((0,T);L^2(\Omega)\right)}^2+\left\|p^M\right\|_{L^2\left((0,T);H^1(\Omega)\right)}^2+\left\|n^M\right\|_{L^2\left((0,T);H^1(\Omega)\right)}^2
			\leq D_1,
		\end{split}	\end{equation}
	\begin{equation}\label{aprioriest_d}
		\left\|\frac{\partial p^M}{\partial t}\right\|_{L^2\left((0,T);H^{-1}(\Omega)\right)},\quad\left\|\frac{\partial n^M}{\partial t}\right\|_{L^2\left((0,T); H^{-1}(\Omega)\right)}\leq D_2,
	\end{equation}
where the constant $D_1$ depends on $c_0$, $\epsilon_{\pm, 0}$, $\mu_\pm$, $\phi_D$, $p_{D, 0}$, $n_{D, 0}$, $\alpha_{1}$, $\eta_0$ and $MT$, while the constant $D_2$ is independent of the solution.
\end{lem}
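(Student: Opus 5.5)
We will derive \eqref{aprioriest} by a standard energy argument for the shifted unknowns $\tilde p=p^M-p_D$ and $\tilde n=n^M-n_D$, which vanish on $\partial\Omega_D$ and may therefore be used as test functions $f=\tilde p$, $g=\tilde n$ in \eqref{Aux_intform_p}--\eqref{Aux_intform_n} (integrated over $(0,t)$ instead of $(0,T)$). Since the estimate is allowed to depend on $MT$, we will not try to be sharp. The key observation is that the truncation $G$ makes the elliptic problem uniformly controlled. The right-hand side $G(M,p^M-n^M)/\epsilon_0$ is bounded by $M/\epsilon_0$ pointwise, so by Proposition \ref{EllipticRegEst},
\[
\|\phi^M(t)\|_{H^2(\Omega)}\leq c_0\left(M|\Omega|^{1/2}/\epsilon_0+\|\phi_D\|_{H^2(\Omega)}\right)=:K_M
\]
for a.e.\ $t$. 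Hence $\|\Delta\phi^M\|_{L^2}\le K_M$, $\|\nabla\phi^M\|_{L^6}\le c_0K_M$ (Sobolev), and $|\nabla\phi^M|$ is bounded in every $L^k$ with $k\le 6$. Likewise, the nonlinearities are tamed. We have $\alpha_1e^{-\alpha_2/|\nabla\phi^M|}\le\alpha_1$, and the min-terms satisfy $\left|\eta_0\min\{M,\cdot\}\right|\le\eta_0 M$ when $p^M\ge0$. When $p^M$ may be negative we will instead use that $\min\{M,\cdot\}\le M$ together with a sign split; if needed, we simply bound the coefficient by $M$ in absolute value on the set where it is finite. In either case $|F_1+\nabla p^M\cdot\mathbf v|\le C_M|\nabla\phi^M|(|n^M|+|p^M|)$.

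The energy identity then reads
\[
\tfrac12\frac{\mathrm d}{\mathrm dt}\|\tilde p\|^2_{L^2}+\epsilon_+\|\nabla\tilde p\|^2_{L^2}=\text{(terms involving }\nabla p_D,\ \mathbf v,\ \nabla\phi^M,\ \Delta\phi^M).
\]
We will handle these terms one at a time. The convection term $\int(\nabla p^M\cdot\mathbf v)\tilde p$ is treated with $\nabla\cdot\mathbf v=0$ and $\nu\cdot\mathbf v=0$: the part $\int\nabla\tilde p\cdot\mathbf v\,\tilde p$ vanishes, and the remainder involves $\nabla p_D$, with $\mathbf v\in L^\infty H^2\subset L^\infty$. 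The drift terms $\frac{\mu_+}{2}\int p^M\Delta\phi^M\tilde p$ and $\frac{\mu_+}{2}\int(\tilde p\nabla p^M-p^M\nabla\tilde p)\cdot\nabla\phi^M$ are the dangerous ones. We will bound them by \eqref{Holder-Cauchy}, with $u=\nabla\phi^M$ or $\Delta\phi^M$ and $w=\nabla\tilde p$, together with the interpolation $\|\tilde p\|_{L^3}\le C\|\tilde p\|_{L^2}^{1/2}\|\tilde p\|_{H^1}^{1/2}$ for $d\le3$. For example,
\[
\int|\tilde p||\nabla\tilde p||\nabla\phi^M|\le \|\nabla\phi^M\|_{L^6}\|\tilde p\|_{L^3}\|\nabla\tilde p\|_{L^2}\le \delta\|\nabla\tilde p\|^2_{L^2}+C_\delta K_M^4\|\tilde p\|^2_{L^2}.
\]
For $\int|\tilde p|^2|\Delta\phi^M|\le K_M\|\tilde p\|_{L^4}^2$ we use Gagliardo--Nirenberg in the same way, together with \eqref{Sob_emb} to control the part of $\|\tilde p\|_{L^4}$ coming from $\nabla\tilde p$. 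Terms involving $p_D\in W^{2,\sigma}\subset L^\infty$ are lower order. The source terms are bounded by $C_M\|\nabla\phi^M\|_{L^6}\|(\tilde p,\tilde n)\|_{L^3}\|(\tilde p,\tilde n)\|_{L^2}$ plus data, which again absorbs. Adding the $p$ and $n$ inequalities and choosing $\delta$ small gives
\[
\frac{\mathrm d}{\mathrm dt}Y+c\left(\|\nabla\tilde p\|^2+\|\nabla\tilde n\|^2\right)\le C(K_M)(1+Y),\qquad Y=\|\tilde p\|^2_{L^2}+\|\tilde n\|^2_{L^2}.
\]
Gr\"onwall's inequality (the linear case of Proposition \ref{Bellman}) yields $Y\le (Y(0)+CT)e^{CT}$, with $C$ depending on $M$; this is the source of the $MT$ dependence. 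Integrating in time and using the Poincar\'e inequality \eqref{Sob_emb} on $H^1_D$, and then adding back $p_D,n_D$, gives \eqref{aprioriest}.

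For \eqref{aprioriest_d} we use the weak form directly: for $f\in H^1_D$ with $\|f\|_{H^1}\le1$, every term in $\langle\partial_tp^M,f\rangle$ is bounded by $C(\|p^M\|_{H^1}+\|p^M\|_{L^2}^{1/2}\|p^M\|_{H^1}^{1/2}+\dots)$ using the same H\"older bounds. Squaring and integrating in time, \eqref{aprioriest} bounds $\|\partial_tp^M\|_{L^2H^{-1}}$. Here the constant depends only on the data through $D_1$, which is how we read the phrase ``independent of the solution''.

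The main obstacle is the cubic drift term $p^M\nabla p^M\cdot\nabla\phi^M$. The proof relies on the $M$-truncation to make $\|\phi^M\|_{H^2}$ bounded a priori, rather than bounded in terms of $\|p^M-n^M\|_{L^2}$. Without this, one obtains a superlinear inequality $Y'\le C Y^2$, which only gives the local-time control that is later handled via Proposition \ref{Bellman}.
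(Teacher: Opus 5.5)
Your route is the paper's. You use the same test functions $p^M-p_D$, $n^M-n_D$, and the same $M$-dependent bound $\|\phi^M(t)\|_{H^2(\Omega)}\le c_0(M/\epsilon_0+\|\phi_D\|_{H^2(\Omega)})$ obtained from $|G|\le M$. The rest is term-by-term absorption and a linear Gr\"onwall argument, with the $H^{-1}$ bound read off the weak form.

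Two smaller remarks. First, the term you call the main obstacle is absent. In the skew-symmetric form \eqref{Aux_intform_p}, with $\tilde p=p^M-p_D$ one has $\tilde p\nabla p^M-p^M\nabla\tilde p=\tilde p\nabla p_D-p_D\nabla\tilde p$, so the cubic part cancels identically. This is why \eqref{Int_est} only contains $p_D\nabla p^M-p^M\nabla p_D$. Moreover $|\Delta\phi^M|=|G|/\epsilon_0\le M/\epsilon_0$ pointwise, so your Gagliardo--Nirenberg estimates are valid but unnecessary. Second, killing $\int\tilde p\,\nabla\tilde p\cdot\mathbf v$ uses $\nu\cdot\mathbf v=0$ on $\partial\Omega_N$, which is stated in the introduction but not in Assumption \ref{main_assum}. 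The paper's direct bound $\|\mathbf v\|_{L^\infty}\|\tilde p\|_{L^2}\|\nabla\tilde p\|_{L^2}$ avoids this.

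The step that fails as written is the treatment of the $\min$-coefficients. They cannot be bounded by $M$ in absolute value ``on the set where they are finite''. Both $\min\{M,-\tfrac{Mn^M}{1+Mp^M}\}$ and $\min\{M,-\tfrac{Mp^M}{1+Mp^M}\}$ are bounded only from above and are unbounded below near $\{1+Mp^M=0\}$ (the first wherever $n^M\neq0$). Even when $p^M,n^M\ge0$, the first has modulus $Mn^M/(1+Mp^M)$, which exceeds $M$ once $n^M>1+Mp^M$; only its product with $p^M$ is controlled, by $n^M$.

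A one-sided sign split does rescue the $F_1$ contribution: since $p_D\ge0$,
$\min\{M,-\tfrac{Mn^M}{1+Mp^M}\}\,p^M(p^M-p_D)\le M|p^M||p^M-p_D|+|n^M|p_D$.
It does not rescue the $F_2$ contribution. On $\{p^M<-1/M,\ 0<n^M<n_D\}$ the quantity $\min\{M,-\tfrac{Mp^M}{1+Mp^M}\}\,n^M(n^M-n_D)$ is positive and unbounded as $p^M\uparrow-1/M$. Hence your bound $|F_1+\nabla p^M\cdot\mathbf v|\le C_M|\nabla\phi^M|(|n^M|+|p^M|)$ and its $F_2$ analogue are unjustified for sign-changing solutions, and both \eqref{aprioriest} and \eqref{aprioriest_d} rest on them. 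The paper's estimate \eqref{J_2} makes the same replacement of the $\min$ by $M$, so you inherit this hole rather than introduce it, but it cannot be waved through.

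A clean repair is to reverse the order of the arguments. First prove $p^M,n^M\ge0$ by testing with $-p^M_-$, $-n^M_-$ as in Lemma \ref{Aux_zero_neg_pn}. There the $\min$-terms appear as $\min\{\cdot\}\,(p^M_-)^2$ and $\min\{\cdot\}\,(n^M_-)^2$ with non-negative weights. So only the true one-sided bound $\min\{M,\cdot\}\le M$ and the truncation bound on $\phi^M$ are needed. Once non-negativity is known, $|\min\{M,-\tfrac{Mn^M}{1+Mp^M}\}\,p^M|\le n^M$ and $|\min\{M,-\tfrac{Mp^M}{1+Mp^M}\}|\le1$, and your argument then goes through unchanged.
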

\begin{proof}

	Clearly, if the {\it a priori} estimate \eqref{aprioriest} holds, then the equations \eqref{Aux_parab_eq_pos} and \eqref{Aux_parab_eq_neg} readily imply that the estimate \eqref{aprioriest_d} is valid.  
	
	To show \eqref{aprioriest}, we take, in equations \eqref{Aux_intform_sys}, 
	\[\begin{split}f(x,y,s)=\begin{cases}
		\mu_-\left(p^M(x,y,s)-p_D(x,y)\right),\quad~&\forall~s\in(0, t],\\ 0,\quad &\forall~s\in(t, T),
	\end{cases}\\ g(x,y,s)=\begin{cases}
	\mu_+\left(n^M(x,y,s)-n_D(x,y)\right),\quad~&\forall~s\in(0, t],\\ 0,\quad &\forall~s\in(t, T).
	\end{cases}\end{split}\] 
Adding the two  identities yields	
	\begin{equation}\label{Int_est}\begin{split}
		&\int_\Omega\left(\frac{\mu_-}{2}\left|p^M(t)\right|^2
		+\frac{\mu_+}{2}\left|n^M(t)\right|^2\right)\I x\I y
		+\int_{0}^{t}\int_\Omega\left(\epsilon_+\mu_-\left|\nabla p^M\right|^2
		+\epsilon_-\mu_+\left|\nabla n^M\right|^2\right)\I x\I y\I s\\
		=&\int_\Omega\left(\frac{\mu_-}{2}\left|p_0\right|^2
		+\frac{\mu_+}{2}\left|n_0\right|^2+\mu_-p_Dp^M(t)
		-\mu_-p_Dp_0+\mu_+n_Dn^M(t)-\mu_+n_Dn_0\right) \I x \I y\\
		&-\frac{\mu_+\mu_-}{2\epsilon_0}\int_{0}^{t}\int_\Omega\left(\left|p^M\right|^2
		-\left|n^M\right|^2-\left(p^Mp_D-n^Mn_D\right)\right)G
		\left(M, p^M-n^M\right) \I x \I y \I s\\
		&+\int_{0}^{t}\int_\Omega\left(\epsilon_+\mu_-\nabla p_D\cdot\nabla p^M
		+\epsilon_-\mu_+\nabla n_D\cdot\nabla n^M\right) \I x \I y \I s\\
		&+\frac{\mu_-\mu_+}{2}\int_{0}^{t}\int_\Omega\left(p_D\nabla p^M-p^M\nabla p_D-
		\left(n_D\nabla n^M-n^M\nabla n_D\right)\cdot\nabla\phi^M\right) \I x \I y \I s\\
		&+\int_{0}^t\int_\Omega\left(\mu_-F_1\left(p^M,n^M,\left|\nabla\phi^M\right|\right)
		\left(p^M-p_D\right)+\mu_+F_2\left(p^M,n^M,\left|\nabla\phi^M\right|\right)
		\left(n^M-n_D\right)\right) \I x \I y \I s.
	\end{split}\end{equation}
We set
\[a_{\min}=\min\left\{\frac{\mu_-}{2},~\frac{\mu_+}{2},~\epsilon_+\mu_-,~\epsilon_-\mu_+\right\},\]
then 
\begin{equation}\label{Aux_Int_LHS}\begin{split}&a_{\min}
\left[\int_\Omega\left(\left|p^M(t)\right|^2+\left|n^M(t)\right|^2\right)
\mathrm{d}x \, \mathrm{d}y+\int_{0}^{t}\int_\Omega\left(\left|\nabla p^M\right|^2
+\left|\nabla n^M\right|^2\right) \I x \I y \I s \right]\\
\leq& \int_\Omega\left(\frac{\mu_-}{2}\left|p^M(t)\right|^2
+\frac{\mu_+}{2}\left|n^M(t)\right|^2\right)\mathrm{d}x \, \mathrm{d}y
+\int_{0}^{t}\int_\Omega\left(\epsilon_+\mu_-\left|\nabla p^M\right|^2
+\epsilon_-\mu_+\left|\nabla n^M\right|^2\right) \I x \I y \I s .\end{split}\end{equation}
H\"older's  and Cauchy's inequalities imply 
\begin{equation}\label{Aux_Int_RHS_1}
	\begin{split}
		&\int_\Omega\left(\frac{\mu_-}{2}\left|p_0\right|^2
		+\frac{\mu_+}{2}\left|n_0\right|^2+\mu_-p_Dp^M(t)-\mu_-p_Dp_0+\mu_+n_Dn^M(t)
		-\mu_+n_Dn_0\right)\mathrm{d}x \, \mathrm{d}y\\
		\leq&\left(\frac{\mu_-^2+\mu_+^2}{4\varepsilon}+\frac{3(\mu_-+\mu_+)}{2}\right)\left(\|p_D\|_{L^2(\Omega)}^2+\|p_0\|_{L^2(\Omega)}^2+\|n_D\|_{L^2(\Omega)}^2+\|n_0\|_{L^2(\Omega)}^2\right)\\
		&+\varepsilon\int_\Omega\left(\left|p(t)\right|^2+\left|n(t)\right|^2\right)\mathrm{d}x \, \mathrm{d}y.
	\end{split}
\end{equation}Clearly, the definition \eqref{G_def} of $G(M, p^M-n^M)$ implies 
$|G(M, p^M-n^M)|\leq M$ in $\Omega\times(0, T)$. Then, with the Cauchy inequality, we have 
\begin{equation}\label{Aux_Int_RHS_2}
		\begin{split}
			&\frac{\mu_-\mu_+}{2\epsilon_0}\left|\int_{0}^{t}\int_\Omega
			\left(\left|p^M\right|^2-\left|n^M\right|^2-\left(p^Mp_D-n^Mn_D\right)\right)
			G\left(M, p^M-n^M\right) \I x \I y \I s \right|\\
			\leq&\frac{3\mu_-\mu_+}{4\epsilon_0}M\int_{0}^{t}
			\int_\Omega \left(\left|p^M\right|^2+\left|n^M\right|^2\right) \I x \I y \I s 
			+\frac{\mu_-\mu_+}{4\epsilon_0}tM\left(\|p_D\|_{L^2(\Omega)}^2+\|n_D\|_{L^2(\Omega)}^2\right),
		\end{split}
	\end{equation}
\begin{equation}\label{Aux_Int_RHS_3}
	\begin{split}
		&\int_{0}^{t}\int_\Omega\left(\epsilon_+\mu_-\nabla p_D\cdot\nabla p^M
		+\epsilon_-\mu_+\nabla n_D\cdot\nabla n^M\right) \I x \I y \I s \\
		\leq&\varepsilon\int_{0}^{t}\int_\Omega\left(\left|\nabla p^M\right|^2
		+\left|\nabla n^M\right|^2\right) \I x \I y \I s 
		+\frac{t}{4\varepsilon}\left(\epsilon_+^2\mu_-^2+ \epsilon_-^2\mu_+^2\right)\left(\|p_D\|_{L^2(\Omega)}^2+\|n_D\|_{L^2(\Omega)}^2\right).
	\end{split}
\end{equation}
Using \eqref{Sob_emb}, \eqref{Holder-Cauchy}, \eqref{elliptic_reg},  and 
 $H^1(\Omega)\hookrightarrow L^6(\Omega)$
it follows that
	\begin{equation}\label{Aux_Int_RHS_4}
		\begin{split}
			&\frac{\mu_-\mu_+}{2}\int_{0}^{t}\int_\Omega
			\left(p_D\nabla p^M-p^M\nabla p_D-\left(n_D\nabla n^M-n^M\nabla n_D\right)\right)
			\cdot\nabla\phi^M \I x \I y \I s \\
			\leq&\frac{\mu_-\mu_+}{2}\int_{0}^{t}\left\|\nabla\phi^M(s)\right\|_{L^6(\Omega)}\left(\left\|\nabla p^M(s)\right\|_{L^2(\Omega)}\left\|p_D\right\|_{L^3(\Omega)}+\left\|\nabla n^M(s)\right\|_{L^2(\Omega)}\left\|n_D\right\|_{L^3(\Omega)}\right)\mathrm{d}s\\
			+&\frac{\mu_-\mu_+}{2}\int_{0}^{t}\left\|\nabla\phi^M(s)\right\|_{L^6(\Omega)}\left(\left\| p^M(s)\right\|_{L^2(\Omega)}\left\|\nabla p_D\right\|_{L^3(\Omega)}+\left\| n^M(s)\right\|_{L^2(\Omega)}\left\|\nabla n_D\right\|_{L^3(\Omega)}\right)\mathrm{d}s\\
			\leq&\left(\frac{A_1}{\epsilon_0}+\frac12\right)\int_{0}^{t}
			\int_\Omega\left(\left|p^M\right|^2+\left|n^M\right|^2\right) \mathrm{d}x \, \mathrm{d}y \mathrm{d}s+A_1\left\|\phi_D\right\|_{H^2(\Omega)}^2t \\
			+&\varepsilon\int_{0}^{t}\int_\Omega
			\left(\left|\nabla p^M\right|^2+\left|\nabla n^M\right|^2\right) \I x \I y \I s .
		\end{split}
	\end{equation} Here \begin{equation}\label{A1}A_1=\frac{\mu_-^2\mu_+^2c_0^2}{4}\left(\frac{c_0^2}{2\varepsilon}+1\right)\left(\left\|p_D\right\|^2_{W^{1,3}(\Omega)}+\left\|n_D\right\|^2_{W^{1,3}(\Omega)}\right).\end{equation}
	Note that
	\begin{equation}
		\begin{split}&\int_{0}^t\int_\Omega\left(\mu_-F_1\left(p^M,n^M,\left|\nabla\phi^M\right|\right)
		\left(p^M-p_D\right)+\mu_+F_2\left(p^M,n^M,\left|\nabla\phi^M\right|\right)
		\left(n^M-n_D\right)\right) \I x \I y \I s \\
		=:&J_1+J_2+J_3.\end{split}\end{equation}
        Here 
        \[\begin{split}
            J_1&:=\mu_-\alpha_1\int_{0}^t\int_\Omega		\left\{ e^{-\alpha_2/\left|\nabla\phi^M\right|}\left|\nabla\phi^M\right|		\left[\mu_- n^M\left(p^M-p_D\right)+\mu_+n^M		\left(n^M-n_D\right)\right]\right\} \I x \I y \I s, \\
            J_2&:=\mu_-\eta_0\int_{0}^t\int_\Omega\left[\left|\nabla\phi^M\right|
		\min\left\{M,-\frac{Mn^M}{1+Mp^M}\right\}\mu_-p^M
		\left(p^M-p_D\right)\right] \I x \I y \I s \\
		&+\mu_-\eta_0\int_{0}^t\int_\Omega\left[\left|\nabla\phi^M\right|
		\min\left\{M,-\frac{Mp^M}{1+Mp^M}\right\}\mu_
		+n^M\left(n^M-n_D\right)\right] \I x \I y \I s, \\
        J_3&:=-\int_{0}^{t}\int_\Omega\left(\left(\mu_-\left(p^M-p_D\right)\nabla p^M+\mu_
		+\left(n^M-n_D\right)\nabla n^M\right)\cdot\mathbf{v}\right) \I x \I y \I s. \\
        \end{split}
        \]
{        We set 
        \begin{equation}\label{A2-A5}\begin{split}A_2^M=&\frac{c_0^2}{2}\left(\frac{M}{\epsilon_0}+\|\phi_D\|_{H^{2}(\Omega)}\right)^2\left\|p_D+n_D\right\|^2_{L^3(\Omega)},\\ 
        A_3^M=&\frac{\mu_-^2\alpha_1^2\left(\mu_++\mu_-\right)^2}{2}\left(\frac{c_0^2}{\varepsilon/c_0^2}\left(\frac{M}{\epsilon_0}+\|\phi_D\|_{H^{2}(\Omega)}\right)^2+1\right),\\
         A^M_4=&\frac{\mu_-^2\eta_0^2\left(\mu_++\mu_-\right)^2c_0^2}{4\varepsilon/c_0^2}M^2
	\left(\frac{M}{\epsilon_0}+\|\phi_D\|_{H^2(\Omega)}\right)^2+\frac{1}{2},\\
	A^M_5=&\frac{\mu_-^2\eta_0^2\left(\mu_++\mu_-\right)^2c_0^4}{2}M^2
	\left(\frac{M}{\epsilon_0}+\|\phi_D\|_{H^2(\Omega)}\right)^2\left(\|\nabla p_D\|^2_{L^2(\Omega)}
	+\|\nabla n_D\|^2_{L^2(\Omega)}\right).\end{split}\end{equation}
	Now,  \eqref{Sob_emb}, \eqref{Holder-Cauchy} and \eqref{elliptic_reg} imply 	
	\begin{equation}\label{J_1}\begin{split}
		|J_1|&\leq \mu_-\alpha_1\left(\mu_++\mu_-\right)\int_{0}^t	\int_\Omega\left|\nabla\phi^M\right|\left|n^M\right|\left(\left|p^M+n^M\right|+\left|p_D+n_D\right|\right) \I x \I y \I s \\
		&\leq \mu_-\alpha_1\left(\mu_++\mu_-\right)\int_{0}^t\left(\left\|\nabla\phi^M(s)\right\|_{L^6(\Omega)}\left\|n^M(s)\right\|_{L^3(\Omega)}\left\|p^M(s)+n^M(s)\right\|_{L^2(\Omega)}\right)\mathrm{d}s\\
		&+\mu_-\alpha_1\left(\mu_++\mu_-\right)\int_{0}^t\left(\left\|\nabla\phi^M(s)\right\|_{L^6(\Omega)}\left\|n^M(s)\right\|_{L^{2}(\Omega)}\left\|p_D+n_D\right\|_{L^3(\Omega)}\right)\mathrm{d}s\\
		&\leq \varepsilon\int_{0}^t\int_\Omega\left|\nabla n^M\right|^2 \I x \I y \I s 
		+A_3^M\int_0^t\int_\Omega\left(\left|p^M\right|^2
		+\left|n^M\right|^2\right) \I x \I y \I s +A_2^Mt,
		\end{split}\end{equation} and
	\begin{equation}\label{J_2}\begin{split}
	|J_2|&\leq\mu_-\eta_0\left(\mu_++\mu_-\right)\int_{0}^t\int_\Omega M
	\left|\nabla\phi^M\right|\left(\left|p^M\right|^2+\left|n^M\right|^2
	+\left|p_Dp^M+n_Dn^M\right|\right) \I x \I y \I s \\
	&\leq \mu_-\eta_0\left(\mu_++\mu_-\right)\int_{0}^tM
	\left\|\nabla\phi^M(s)\right\|_{L^6(\Omega)}\left(\left\|p^M(s)\right\|_{L^{3}(\Omega)}
	+\left\|p_D\right\|_{L^{3}(\Omega)}\right)\left\|p^M(s)\right\|_{L^2(\Omega)}\mathrm{d}s\\
	&+\mu_-\eta_0\left(\mu_++\mu_-\right)\int_{0}^tM\left\|\nabla\phi^M(s)
	\right\|_{L^6(\Omega)}\left(\left\|n^M(s)\right\|_{L^{3}(\Omega)}
	+\left\|n_D\right\|_{L^{3}(\Omega)}\right)\left\|n^M(s)\right\|_{L^2(\Omega)}\mathrm{d}s\\
	&\leq \varepsilon\int_0^t\int_\Omega\left(\left|\nabla p^M\right|^2
	+\left|\nabla n^M\right|^2\right) \I x \I y \I s 
	+A^M_4\int_{0}^{t}\int_\Omega\left(\left| p^M\right|^2
	+\left| n^M\right|^2\right) \I x \I y \I s +A^M_5t.\end{split}\end{equation}}
Because $\mathbf{v}\in L^\infty\left((0, T); H^2(\Omega)\right)$, we obtain from \eqref{Sob_emb}, \eqref{Holder-Cauchy}, the Sobolev embedding  \eqref{Sob_emb_H2} and the elliptic  estimate \eqref{elliptic_reg},  
\begin{equation*}\begin{split}
		\left|J_3^{(1)}\right|:=&\left|\int_{0}^{t}\int_\Omega\left(\mu_-\left(p^M-p_D\right)\nabla p^M
		\cdot\mathbf{v}\right) \I x \I y \I s \right|\\
		\leq&\mu_-\int_{0}^{t}\left(\left\|p^M(s)\right\|_{L^2(\Omega)}
		+\left\|p_D\right\|_{L^2(\Omega)}\right)\left\|\nabla p^M(s)\right\|_{L^2(\Omega)}
		\left\|\mathbf{v}(s)\right\|_{L^\infty(\Omega)}\mathrm{d}s\\
		\leq&\varepsilon\int_{0}^{t}\int_\Omega\left|\nabla p^M\right|^2 \I x \I y \I s 
		+\frac{\mu_-^2}{4\varepsilon}\esssup_{t\in(0, T)}\|\mathbf{v}(t)\|^2_{H^2(\Omega)}
		\left(\int_{0}^t\left|p^M\right|^2 \I x \I y \I s +\|p_D\|_{L^2(\Omega)}^2t\right)\end{split}\end{equation*}
  and  \begin{equation*}\begin{split}
		\left|J_3^{(2)}\right|:=&\left|\int_{0}^{t}\int_\Omega\left(\mu_+\left(n^M-n_D\right)\nabla n^M\cdot\mathbf{v}\right)
		 \I x \I y \I s \right|\\
		\leq&\varepsilon\int_{0}^{t}\int_\Omega\left|\nabla n^M\right|^2 \I x \I y \I s 
		+\frac{\mu_+^2}{4\varepsilon}\esssup_{t\in(0, T)}\|\mathbf{v}(t)\|^2_{H^2(\Omega)}
		\left(\int_{0}^t\left|n^M\right|^2 \I x \I y \I s +\|n_D\|_{L^2(\Omega)}^2t\right).
	\end{split}
\end{equation*}
Furthermore,
	\begin{equation}\label{Aux_Int_RHS_5}\begin{split}
		|J_3|\leq& \left|J_3^{(1)}\right|+\left|J_3^{(2)}\right|\\
		\leq& \varepsilon\int_{0}^{t}\int_\Omega\left(\left|\nabla p^M\right|^2
		+\left|\nabla n^M\right|^2\right) \I x \I y \I s \\
		+&\frac{\mu_-^2+\mu_+^2}{4\varepsilon}\esssup_{t\in(0, T)}\|\mathbf{v}(t)\|^2_{H^2(\Omega)}
		\left(\|p_D\|_{L^2(\Omega)}^2+\|n_D\|_{L^2(\Omega)}^2\right)t\\
		+&\frac{\mu_-^2+\mu_+^2}{4\varepsilon}\esssup_{t\in(0, T)}\|\mathbf{v}(t)\|^2_{H^2(\Omega)}
		\int_{0}^t\int_\Omega\left(\left|p^M\right|^2
		+\left|n^M\right|^2\right) \I x \I y \I s.
		\end{split}
	\end{equation}
	Therefore, summing  \eqref{Int_est}-\eqref{Aux_Int_RHS_5}, we conclude that
	\begin{equation}\label{Int_est_4}\begin{split}
			&b_{\min}\left[\int_\Omega\left(\left|p^M(t)\right|^2
			+\left|n^M(t)\right|^2\right)\mathrm{d}x \, \mathrm{d}y
			+\int_{0}^{t}\int_\Omega\left(\left|\nabla p^M\right|^2
			+\left|\nabla n^M\right|^2\right) \I x \I y \I s \right]\\
			\leq&B_1+B_2^Mt
			+B_3^M\int_{0}^{t}\int_\Omega \left(\left|p^M\right|^2
			+\left|n^M\right|^2\right) \I x \I y \I s ,
	\end{split}\end{equation}
	where
\[B_1=\left(\frac{\mu_-^2+\mu_+^2}{4\varepsilon}+\frac{3(\mu_-+\mu_+)}{2}\right)\left(\|p_D\|_{L^2(\Omega)}^2+\|p_0\|_{L^2(\Omega)}^2+\|n_D\|_{L^2(\Omega)}^2+\|n_0\|_{L^2(\Omega)}^2\right),\]
\[\begin{split}B_2^M&=\left(\|p_D\|_{L^2(\Omega)}^2+\|n_D\|_{L^2(\Omega)}^2\right)\left(M+\frac{1}{4\varepsilon}\left(\epsilon_+^2\mu_-^2+\epsilon_-^2\mu_+^2\right)+\frac{\mu_-^2+\mu_+^2}{4\varepsilon}\esssup_{t\in(0, T)}\|\mathbf{v}(t)\|_{H^2(\Omega)}\right)\\
	&+A_1\|\phi_D\|_{H^2(\Omega)}^2+A_2^M+A_5^M,\end{split}\]
\[B_3^M=\frac{3\mu_-\mu_+}{4\epsilon_0}M+
\left(\frac{A_1}{\epsilon_0}+\frac12\right)+A_3^M+A_4^M
+\frac{\mu_-^2+\mu_+^2}{4\varepsilon}\esssup_{t\in(0, T)}\|\mathbf{v}(t)\|_{H^2(\Omega)} ,
\]
{$A_1$ is given by \eqref{A1} and $A^M_{j}$, $j=2,\dots,5$, are given in \eqref{A2-A5},}
with $\varepsilon>0$ chosen sufficiently small so that 
\[
b_{\min}=a_{\min}-6\varepsilon>0.
\]
Therefore, by Gr\"{o}nwall's inequality, we obtain 
\[
	\begin{split}
		&\left\|p^M\right\|_{L^\infty\left((0, T); L^2(\Omega)\right)}+\left\|n^M\right\|_{L^\infty\left((0, T); L^2(\Omega)\right)}+\left\|p^M\right\|_{L^2\left((0, T); H^1(\Omega)\right)}+\left\|n^M\right\|_{L^2\left((0, T); H^1(\Omega)\right)}\\
		&\leq \left(B_1+B_2^MT\right)\exp\left(B_3^MT\right)=:D_1.
	\end{split}
\]
This concludes the \textit{a priori} estimate \eqref{aprioriest}, and Lemma  \ref{Aux_pro_a_priori_est} is proved.
\end{proof}

\subsection{Existence of a Solution to the Auxiliary System}\label{Exist_AuxSys}

We now prove the existence of a weak solution to problem \eqref{Aux_EPSys}-\eqref{Aux_bdyval}.
\begin{thm}\label{Exist_Aux_Pro}
	Under Assumption \ref{main_assum}, then for any $M$, $M\in(0,\infty)$, there exists 
	a weak solution of problem \eqref{Aux_EPSys}-\eqref{Aux_bdyval}.
\end{thm}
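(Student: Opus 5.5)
We prove Theorem \ref{Exist_Aux_Pro} by a Galerkin approximation combined with the Schauder (or Leray--Schauder) fixed point theorem. The \textit{a priori} bounds of Lemma \ref{Aux_pro_a_priori_est} supply the uniform estimates, and the Aubin--Lions Lemma \ref{comp2} supplies the compactness.

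First we homogenise the boundary data. Writing $p^M=p_D+\tilde p$ and $n^M=n_D+\tilde n$ with $\tilde p,\tilde n\in H^1_D(\Omega)$, we fix an orthonormal basis $\{w_k\}$ of $L^2(\Omega)$ consisting of eigenfunctions of the mixed Dirichlet--Neumann Laplacian. These eigenfunctions are orthogonal in $H^1_D(\Omega)$.

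We then linearise. Given $(\bar p,\bar n)$ in the closed convex set
\[
K=\bigl\{(\bar p,\bar n)\in L^2((0,T);L^2(\Omega))^2:\ \|\bar p\|,\|\bar n\|\le R\bigr\},
\]
we define $\bar\phi(t)$ by Proposition \ref{EllipticRegEst} with right-hand side $G(M,\bar p-\bar n)/\epsilon_0$. Since $|G|\le M$, this gives
\[
\|\bar\phi(t)\|_{H^2(\Omega)}\le c_0\bigl(M|\Omega|^{1/2}/\epsilon_0+\|\phi_D\|_{H^2(\Omega)}\bigr)
\]
uniformly in $t$, independently of $(\bar p,\bar n)$. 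This uniform $L^\infty(H^2)$ control of the field is exactly what the truncation $G$ buys.

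Next we solve the linear parabolic problems for $(p,n)$. The drift is taken as $\nabla\bar\phi$. The reaction coefficients are frozen as
\[
a_1=\mu_-|\nabla\bar\phi|\min\bigl\{M,-M\bar n/(1+M\bar p)\bigr\},
\]
and analogously $a_2$, with the $\alpha_1$-term evaluated at $|\nabla\bar\phi|$ and acting on the unknown $n$. Each truncated coefficient is bounded by $M|\nabla\bar\phi|\in L^\infty(L^6)$. The convection term $\nabla p\cdot\mathbf v$ is kept, with $\mathbf v\in L^\infty(L^\infty)$.

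For the $N$-dimensional Galerkin system we get linear ODEs with $L^\infty$-in-time coefficients, hence a unique absolutely continuous solution. Repeating the energy computation of Lemma \ref{Aux_pro_a_priori_est}, with the same test functions and the same H\"older/Sobolev splittings \eqref{Holder-Cauchy}, gives bounds in
\[
L^\infty(L^2)\cap L^2(H^1)\quad\text{and}\quad \partial_t\in L^2(H^{-1}).
\]
These bounds are uniform in $N$ and in $(\bar p,\bar n)\in K$, because only $|G|\le M$ and $|\min\{\cdot\}|\le M$ enter them. Weak limits as $N\to\infty$ give a unique weak solution $(p,n)=:\mathcal T(\bar p,\bar n)$ of the linear problem; uniqueness follows from the energy estimate applied to the difference. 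We choose $R$ larger than the resulting $L^2(L^2)$ bound, so that $\mathcal T(K)\subset K$.

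Compactness of $\mathcal T$ follows from the Aubin--Lions Lemma \ref{comp2} with
\[
\mathfrak X_0=H^1(\Omega),\quad \mathfrak X=L^2(\Omega),\quad \mathfrak X_1=H^{-1}(\Omega),\quad p=q=2,
\]
so $\mathcal T(K)$ is precompact in $L^2(L^2)$.

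The main obstacle is continuity of $\mathcal T$ on $K$ in the $L^2(L^2)$ topology. Suppose $(\bar p_k,\bar n_k)\to(\bar p,\bar n)$ in $L^2(L^2)$. Because $G(M,\cdot)$ is Lipschitz, elliptic regularity gives $\bar\phi_k\to\bar\phi$ in $L^2(H^2)$, so $\nabla\bar\phi_k\to\nabla\bar\phi$ in $L^2(L^6)$ and, along subsequences, a.e. The maps $z\mapsto|z|e^{-\alpha_2/|z|}$ and the truncated minima are continuous and bounded by $C|z|$ and $M$ respectively; note that $-M\bar n/(1+M\bar p)$ has a singularity at $\bar p=-1/M$, but the outer $\min$ with $M$ and the product with a bounded factor can be handled by dominated convergence, since the coefficient is at most $M$ in modulus. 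Hence the frozen coefficients converge a.e. with a uniform $L^\infty(L^6)$ majorant.

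With this, we extract a subsequence of $\mathcal T(\bar p_k,\bar n_k)$ converging weakly in $L^2(H^1)$ and strongly in $L^2(L^2)$. We pass to the limit in the products appearing in \eqref{Aux_intform_sys}, namely weak $\times$ strong convergence tested against smooth $f$. Uniqueness of the linear solution then identifies the limit as $\mathcal T(\bar p,\bar n)$, so the whole sequence converges.

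Schauder's theorem then yields a fixed point, which is a weak solution of \eqref{Aux_EPSys}--\eqref{Aux_bdyval}. The initial data are attained because the solution lies in $C([0,T];L^2)$ via $H^1\cap\partial_t\in H^{-1}$. The regularity $\phi^M\in L^\infty(H^2)$ comes from the uniform elliptic bound above.
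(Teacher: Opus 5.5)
Your overall strategy is sound, but it is a genuinely different route from the paper's. The paper (Appendix~\ref{appA}) applies Galerkin directly to the nonlinear coupled system. There $p_j^M-p_D$ and $n_j^M-n_D$ are expanded in the mixed eigenfunctions, and $\phi_j^M$ is recomputed from $G(M,p_j^M-n_j^M)$ at every time. The resulting nonlinear ODE system is solved locally and continued to $[0,T]$ via the energy bound of Lemma~\ref{Aux_pro_a_priori_est}. Aubin--Lions plus a.e.\ convergence, after an extra argument for strong convergence of the gradients, then lets it pass to the limit in the nonlinear terms.

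You instead freeze $(\bar p,\bar n)$ in the potential and in the truncated reaction coefficients, solve a linear problem, and close with Schauder on a ball of $L^2(L^2)$. This gives you a linear Galerkin ODE, so global solvability is free and no continuation step is needed. All limit passages are in linear problems with strongly convergent coefficients, where weak-times-strong convergence suffices, and the nonlinear structure of $F_1,F_2$ enters only through continuity of the coefficient maps. The paper's route is more direct: one approximation, one compactness argument, and no linear uniqueness or fixed-point theorem. Your uniform bound over all of $K$ is available because, as in Lemma~\ref{Aux_pro_a_priori_est}, only $|G|\le M$ and a bound on the truncated coefficient enter.

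That last bound is where your argument, like the paper's, fails as written. $\min\{M,x\}$ is bounded only from above. For $\bar n>0$, $-M\bar n/(1+M\bar p)\to-\infty$ as $\bar n\to\infty$ or as $\bar p\downarrow-1/M$, so the coefficient is not at most $M$ in modulus. The paper's estimate \eqref{J_2} rests on the same incorrect claim.

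For your continuity step the problem is worse. The coefficient is undefined on $\{\bar p=-1/M\}$ and, for $\bar n>0$, jumps from $M$ to $-\infty$ across that set. Yet $K$ (for $R$ large) contains functions equal to $-1/M$ on sets of positive measure. Take $\bar p\equiv-1/M$, $\bar n\equiv1$, and approximate by $\bar p_k=-1/M\pm1/k$: the coefficients are $-k$ and $M$ respectively, although both sequences converge to $\bar p$ in $L^2(L^2)$. So a.e.\ convergence of $\bar p_k$ gives no a.e.\ convergence of the coefficients, dominated convergence is unavailable, and $\mathcal T$ is neither well defined nor continuous on $K$.

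The fix is to use a bounded continuous truncation, e.g.\ clip to $[-M,M]$ and replace $1+M\bar p$ by $1+M\bar p_+$. With that change your Schauder argument goes through verbatim. The paper's Galerkin ODE tacitly needs the same modification, since its right-hand side can even fail to be integrable near the level set $p_j^M=-1/M$; so does its nonlinear limit passage.
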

\begin{proof}
	The derived \textit{a priori} estimates \eqref{aprioriest} and \eqref{aprioriest_d} 
	are sufficient to ensure the existence of weak  solutions $\left(\phi^M, p^M, n^M\right)$ 
	to the problem \eqref{Aux_EPSys}. To this end, use of the Galerkin method gives 
	approximate solutions $\left(\phi^{M}_j, p_j^M, n_j^M\right)$ (see, for example, Sec. 7.1.2 
	of \cite{evans2022partial} and Sec. 6 of \cite{islamov2003global}), 
	where $p_j^M$ and $n_j^M$ are defined in the form of finite sums using a basis of $H^{1}(\Omega)$, 
	while $\phi^M_j$ is constructed from Poisson's equation. 
	Furthermore, all above-mentioned estimates, in particular, \eqref{aprioriest} 
	and \eqref{aprioriest_d}, hold uniformly with respect to $j$, 
	which implies that one can justify the passage to the limit 
	and obtain the weak solution of the problem \eqref{Aux_EPSys}. 
	More details of the proof can be found in Appendix \ref{appA}.

\end{proof}

%%%%%%%%%%%%%%%%%%%%%%%%%%%%%%%%

\section{Positivity of Solutions to the Auxiliary Problem}\label{positivity}

%%%%%%%%%%%%%%%%%%%%%%%%%%%%%%%%

In this section, we shall show that, under the conditions \ref{con1}-\ref{con3}, $p^M$, $n^M\geq 0$ almost everywhere in $\Omega\times(0, T)$. We define
\[u_+=\max\{u, ~ 0\},\quad u_-=\max\{-u,~ 0\}.\]
Clearly, $u_+$, $u_-\geq0$, $u=u_+-u_-$, $|u|=u_++u_-$ and $u_+u_-=0$. We note that $u_\pm$, $|u|\in W^{1,2}(\Omega)$ if $u\in W^{1,2}(\Omega)$, and
\[\mathrm{D}u_+=\begin{cases}
	\mathrm{D}u,\quad&\text{for}~ u>0,\\ 0,\quad&\text{for}~ u\leq 0, 
\end{cases}\quad \mathrm{D}u_-=\begin{cases}
0,\quad&\text{for}~ u\geq 0,\\ -\mathrm{D}u,\quad&\text{for}~ u< 0.
\end{cases}\]
Here the operator  $\mathrm{D}$ represents the derivative of the function $u$ with respect to spatial or time variables.  Clearly, 
\begin{equation}\label{D_relation}u_-\mathrm{D}u_+=u_+\mathrm{D}u_-=\mathrm{D}u_-\mathrm{D}u_+=\mathrm{D}u_+\mathrm{D}u_-=0.\end{equation}
\begin{lem}\label{Aux_zero_neg_pn}
	The functions $p^M$ and $n^M$ from Theorem \ref{Exist_Aux_Pro} are non-negative almost everywhere in $\Omega\times(0, T)$.
\end{lem}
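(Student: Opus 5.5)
We will prove non-negativity by the Stampacchia truncation method: we test the weak formulation \eqref{Aux_intform_sys} with the negative parts of the densities and close with Gr\"onwall's inequality.

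\textbf{Step 1 (test functions).} By \ref{con1} we have $p_D,n_D\geq R_a>0$ on $\partial\Omega_D$. The traces of $p^M,n^M$ equal $p_D,n_D$ there, so the negative parts $p^M_-$ and $n^M_-$ vanish on $\partial\Omega_D$. Hence $f=p^M_-\chi_{(0,t)}$ and $g=n^M_-\chi_{(0,t)}$ are admissible in \eqref{Aux_intform_sys}. By \ref{con2} the initial data satisfy $(p_0)_-=(n_0)_-=0$. Using \eqref{D_relation}, the time derivative term gives $-\frac12\|p^M_-(t)\|^2_{L^2}$, and the diffusion term gives $-\epsilon_+\int_0^t\|\nabla p^M_-\|^2_{L^2}$. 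Similarly for $n^M$.

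\textbf{Step 2 (transport terms).} On the support of $p^M_-$ we have $p^M=-p^M_-$. The drift terms $\frac{\mu_+}{2}p\Delta\phi f$ and $\frac{\mu_+}{2}(f\nabla p-p\nabla f)\cdot\nabla\phi$ then reduce to $\frac{\mu_+}{2}|p^M_-|^2\Delta\phi^M$; the antisymmetric part cancels. Here $\Delta\phi^M=-G/\epsilon_0$ and $|G|\leq M$, so this contribution is bounded by $C M\int_0^t\|p^M_-\|^2_{L^2}$.

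The velocity term is $\int\nabla p^M\cdot\mathbf v\, p^M_-=-\frac12\int\mathbf v\cdot\nabla|p^M_-|^2$. This vanishes after integration by parts, using $\nabla\cdot\mathbf v=0$ and $\nu\cdot\mathbf v=0$. Alternatively, if the boundary condition on $\mathbf v$ is not available, we bound it by $\varepsilon\|\nabla p^M_-\|^2+C\|\mathbf v\|^2_{L^\infty}\|p^M_-\|^2$, using \eqref{Sob_emb_H2}.

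\textbf{Step 3 (reaction terms; the main obstacle).} The term $\int F_1 p^M_-$ involves $n^M$, which is why the truncation $\min\{M,-Mn^M/(1+Mp^M)\}$ was built into \eqref{Aux_nonlinearity_p}.

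\emph{Ionisation term.} This is $\mu_-\alpha_1|\nabla\phi^M|e^{-\alpha_2/|\nabla\phi^M|}n^M p^M_-$. Split $n^M=n^M_+-n^M_-$. The part with $n^M_+$ enters with a favourable sign, because it is a source for $p$ tested against $-p_-$. The part with $n^M_-$ is bounded by
\[
\int|\nabla\phi^M|\,n^M_-p^M_-\leq\|\nabla\phi^M\|_{L^6}\|n^M_-\|_{L^3}\|p^M_-\|_{L^2}.
\]
By \eqref{elliptic_reg}, $\|\nabla\phi^M\|_{L^6}\leq c_0(M/\epsilon_0+\|\phi_D\|_{H^2})$. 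By the Sobolev/interpolation estimate \eqref{Holder-Cauchy} this is at most $\varepsilon\|\nabla n^M_-\|^2_{L^2}+C_M(\|n^M_-\|^2+\|p^M_-\|^2)$.

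\emph{Recombination term.} This is $\eta_0|\nabla\phi^M|\min\{\cdots\}p^M\,p^M_-$. The min factor is bounded by $M$ in absolute value, so it is controlled by $C_M\|\nabla\phi^M\|_{L^6}\|p^M_-\|_{L^3}\|p^M_-\|_{L^2}$ and absorbed in the same way. The same holds for the $n$-equation, where $F_2$ is linear in $n^M$ apart from the bounded min factor. For that equation the coupling is only through the bounded coefficient $Mp^M/(1+Mp^M)$; care is needed at $1+Mp^M=0$, which is why the min with $M$ caps it.

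\textbf{Step 4 (conclusion).} Summing the two inequalities and choosing $\varepsilon$ small to absorb the gradient terms, we obtain
\[
\|p^M_-(t)\|^2+\|n^M_-(t)\|^2\leq C_M\int_0^t\big(\|p^M_-\|^2+\|n^M_-\|^2\big)\,\mathrm ds.
\]
Since the left-hand side vanishes at $t=0$, Gr\"onwall's inequality (Proposition \ref{Bellman} with $z$ linear) gives $p^M_-=n^M_-=0$ almost everywhere in $\Omega\times(0,T)$.

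The delicate point is Step 3. We must verify the signs of the cross terms and justify the chain rule $\langle\partial_t p,p_-\rangle=-\frac12\frac{d}{dt}\|p_-\|^2$ for $p\in H^1(H^{-1})\cap L^2(H^1)$, which is standard (Lions–Magenes).
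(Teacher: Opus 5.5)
Your proposal is correct and is essentially the paper's proof. The paper likewise tests \eqref{Aux_intform_sys} with the negative parts (it uses $-\mu_-p^M_-$ and $-\mu_+n^M_-$), lets the antisymmetric drift terms cancel via \eqref{D_relation}, drops the favourably signed $n^M_+p^M_-$ ionisation term, bounds the remaining terms by $\varepsilon\|\nabla(\cdot)_-\|^2+C_M\|(\cdot)_-\|^2$, and concludes by Gr\"onwall from $p^M_-(0)=n^M_-(0)=0$.

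One small correction, which the paper's own estimate also glosses over. $\min\{M,\cdot\}$ is only bounded \emph{above} by $M$; it is unbounded below near $1+Mp^M=0$, and that can occur where $p^M<0$, so your claim that the factor is bounded in absolute value is false. This does not hurt the argument. The recombination terms appear on the right-hand side of the energy identity as $\mu_-^2\eta_0\min\{M,-\tfrac{Mn^M}{1+Mp^M}\}\,|\nabla\phi^M|\,|p^M_-|^2$ and $\mu_-\mu_+\eta_0\min\{M,-\tfrac{Mp^M}{1+Mp^M}\}\,|\nabla\phi^M|\,|n^M_-|^2$, so the one-sided bound $\min\{\cdots\}\le M$ is all you need.
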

\begin{proof}
	Recalling that the Dirichlet boundary conditions
	$p^M\big|_{\partial\Omega_D}=p_D|_{\partial\Omega_D}>0$, $  n^M\big|_{\partial\Omega_D}=n_D|_{\partial\Omega_D}>0$,
	and $p_D\geq 0$, $n_D\geq 0$ a.e. in $\overline{\Omega}$, it follows that 
	\[p_-^M\big|_{\partial\Omega_D}=0,\quad n_-^M\big|_{\partial\Omega_D}=0.\]
	Similar to the manipulations in the proof of Theorem \ref{Exist_Aux_Pro}, 	 we take, in equations \eqref{Aux_intform_sys}, 
	\[f(x,y,s)=\begin{cases}
		-\mu_-p_-^M(x,y,t),\quad~&\forall~s\in(0, t],\\
        ~\\
        0,\quad &\forall~s\in(t, T),
	\end{cases}\] \[g(x,y,s)=\begin{cases}
		-\mu_+n_-^M(x,y,t),\quad~&\forall~s\in(0, t],\\ ~\\ 
        0,\quad &\forall~s\in(t, T).
	\end{cases}\]
	 Adding the  two resulting identities and using the relation \eqref{D_relation} yields
		\begin{equation}\label{Int_est_neg}\begin{split}
			&\frac{1}{2}\int_\Omega\left(\mu_-\left|p_-^M(t)\right|^2
			+\mu_+\left|n_-^M(t)\right|^2\right)\mathrm{d}x \, \mathrm{d}y
			+\int_{0}^{t}\int_\Omega\left(\epsilon_+\mu_-\left|\nabla p_-^M\right|^2
			+\epsilon_-\mu_+\left|\nabla n_-^M\right|^2\right) \I x \I y \I s \\
			&+\int_{0}^{t}\int_\Omega\left(\mu_-^2\alpha_{1}e^{-\alpha_2/\left|\nabla\phi^M\right|}
			\left|\nabla\phi^M\right|n_+^Mp_-^M\right) \I x \I y \I s \\
			=&\frac{1}{2}\int_\Omega\left(\mu_-\left|p_-^M(0)\right|^2+\mu_
			+\left|n_-^M(0)\right|^2\right)\mathrm{d}x \, \mathrm{d}y\\
			&-\frac{\mu_+\mu_-}{2\epsilon_0}\int_{0}^{t}
			\int_\Omega\left(\left|p_-^M\right|^2
			-\left|n_-^M\right|^2\right)G\left(M, p^M-n^M\right) \I x \I y \I s \\
			&+\int_{0}^t\int_\Omega\left(\mu_-\alpha_{1}e^{-\alpha_2/\left|\nabla\phi^M\right|}
			\left|\nabla\phi^M\right|\left(\mu_-n_-^Mp_-^M+\mu_
			+\left|n_-^M\right|^2\right)\right) \I x \I y \I s \\
			&+\eta_0\int_{0}^{t}\int_\Omega\left(\mu_-\min\left\{M, -\frac{Mn^M}{1+Mp^M}\right\}
			\left|p_-^M\right|^2+\mu_+\min\left\{M, -\frac{Mp^M}{1+Mp^M}\right\}
			\left|n_-^M\right|^2\right) \I x \I y \I s \\
			&+\int_{0}^{t}\int_\Omega\left(\left(\mu_-p_-^M\nabla p_-^M+\mu_+n_-^M\nabla n_-^M\right)
			\cdot\mathbf{v}\right) \I x \I y \I s .
	\end{split}\end{equation}
Analogous to estimates  \eqref{Aux_Int_RHS_2}, \eqref{J_1}, \eqref{J_2} and \eqref{Aux_Int_RHS_5}, we correspondingly obtain 
%{\it\color{red}(Might we have an extra equation number here?)}
\begin{equation}\label{ non-neg_Aux_Int_est_1}
\begin{split}
	&\left|\frac{\mu_+\mu_-}{2\epsilon_0}\int_{0}^{t}
	\int_\Omega\left(\left|p_-^M\right|^2-\left|n_-^M\right|^2\right)
	G\left(M, p^M-n^M\right) \I x \I y \I s \right|\\
	\leq&\frac{\mu_+\mu_-}{2\epsilon_0}M\int_{0}^{t}
	\int_\Omega\left(\left|p_-^M\right|^2+\left|n_-^M\right|^2\right) \I x \I y \I s ,\end{split}\end{equation}
\begin{equation}\label{non-neg_Aux_Int_est_2-1}
\begin{split}
	&\left|\int_{0}^t\int_\Omega\left(\mu_-\alpha_{1}e^{-\alpha_2/\left|\nabla\phi^M\right|}
	\left|\nabla\phi^M\right|\left(\mu_-n_-^Mp_-^M+\mu_+\left|n_-^M\right|^2\right)
	\right) \I x \I y \I s \right|\\
	\leq&\mu_-\alpha_1\left(\mu_-+\mu_+\right)
	\int_{0}^{t}\left\|\nabla\phi^M\right\|_{L^6(\Omega)}\left\|n_-^M\right\|_{L^3(\Omega)}
	\left\|p_-^M+n_-^M\right\|_{L^2(\Omega)}\mathrm{d}s\\
	\leq&\frac{\mu_-^2\alpha_1^2\left(\mu_-+\mu_+\right)^2c_0^2}{2\varepsilon/c_0^2}
	\left(\frac{M}{\epsilon_0}+\|\phi_D\|_{H^2(\Omega)}\right)^2\int_{0}^{t}
	\int_\Omega\left(\left|p_-^M\right|^2+\left|n_-^M\right|^2\right) \I x \I y \I s \\
	+&\varepsilon\int_{0}^{t}\int_\Omega\left(\left|\nabla p_-^M\right|^2
	+\left|\nabla n_-^M\right|^2\right) \I x \I y \I s , \end{split}\end{equation}
    \begin{equation}\label{non-neg_Aux_Int_est_2-2}\begin{split}
	&\left|\mu_-\eta_0\int_{0}^{t}\int_\Omega\left(\mu_-\min\left\{M, -\frac{Mn^M}{1+Mp^M}\right\}
	\left|p_-^M\right|^2\left|\nabla\phi^M\right|\right) \I x \I y \I s \right|\\
	+&\left|\mu_-\eta_0\int_{0}^{t}\int_\Omega\left(\mu_+\min\left\{M, -\frac{Mp^M}{1+Mp^M}\right\}
	\left|n_-^M\right|^2\left|\nabla\phi^M\right|\right) \I x \I y \I s \right|\\
	\leq&\frac{\mu_-^2\eta_0^2\left(\mu_++\mu_-\right)^2}{4\varepsilon/c_0^2}M^2
	\left(\frac{M}{\epsilon_0}+\|\phi_D\|_{H^2(\Omega)}\right)^2\int_{0}^{t}
	\int_\Omega\left(\left|p_-^M\right|^2+\left|n_-^M\right|^2\right) \I x \I y \I s \\
	+&\varepsilon\int_{0}^{t}
	\int_\Omega\left(\left|\nabla p_-^M\right|^2
	+\left|\nabla n_-^M\right|^2\right) \I x \I y \I s , \\
\end{split}\end{equation} and 
\begin{equation}\label{ non-neg_Aux_Int_est_3}
	\begin{split}
		&\left|\int_{0}^{t}\int_\Omega\left(\left(\mu_-p_-^M\nabla p_-^M+\mu_+n_-^M\nabla n_-^M\right)\cdot\mathbf{v}\right) \I x \I y \I s \right|\\
		\leq&\frac{\mu_+^2+\mu_-^2}{4\varepsilon}\esssup_{t\in(0, T)}\|\mathbf{v}(t)\|^2_{H^2(\Omega)}\int_{0}^{t}\int_\Omega\left(\left| p_-^M\right|^2+\left| n_-^M\right|^2\right) \I x \I y \I s \\
		+&\varepsilon\int_{0}^{t}\int_\Omega\left(\left|\nabla p_-^M\right|^2+\left|\nabla n_-^M\right|^2\right) \I x \I y \I s .
	\end{split}
\end{equation}
Since 
\[\int_{0}^{t}\int_\Omega\left(\mu_-^2\alpha_{1}e^{-\alpha_2/\left|\nabla\phi^M\right|}\left|\nabla\phi^M\right|n_+^Mp_-^M\right) \I x \I y \I s \geq 0,\]
 we conclude, from \eqref{Aux_Int_LHS}, \eqref{Int_est_neg}-\eqref{ non-neg_Aux_Int_est_3}, that
\begin{equation}\label{ non-neg_Aux_Gronwall}
	\begin{split}&b_{\min}\left(\int_\Omega\left(\left|p_-^M(t)\right|^2+\left|n_-^M(t)\right|^2\right)\mathrm{d}x \, \mathrm{d}y+\int_{0}^{t}\int_\Omega\left(\left|\nabla p_-^M\right|^2+\left|\nabla n_-^M\right|^2\right) \I x \I y \I s \right)\\
	\leq& E_{1}+E_2\int_{0}^{t}\int_\Omega\left(\left| p_-^M\right|^2+\left| n_-^M\right|^2\right) \I x \I y \I s , \end{split}
\end{equation}
where 
\[E_{1}=\frac{1}{2}\int_\Omega\left(\mu_-\left|p_-^M(0)\right|^2+\mu_+\left|n_-^M(0)\right|^2\right)\mathrm{d}x \, \mathrm{d}y, \]
\[ E_{2}=\frac{\mu_+\mu_-}{2\epsilon_0}M+\frac{\mu_-^2(\alpha_1^2+\eta_0^2)\left(\mu_-+\mu_+\right)^2c_0^2}{2\varepsilon/c_0^2}\left(\frac{M}{\epsilon_0}+\|\phi_D\|_{H^2(\Omega)}\right)^2+\frac{\mu_+^2+\mu_-^2}{4\varepsilon}\esssup_{t\in(0, T)}\|\mathbf{v}(t)\|^2_{H^2(\Omega)},\]
and $\varepsilon>0$ is  small enough for $b_{\min}=a_{\min}-3\varepsilon>0.$
Consequently, from \eqref{ non-neg_Aux_Gronwall} and Gr\"{o}nwall's inequality, we obtain
\begin{equation}\label{Gronwall}
\begin{split}
	&\left\|p_-^M\right\|_{L^\infty\left((0, T), L^2(\Omega)\right)}+\left\|n_-^M\right\|_{L^\infty\left((0, T), L^2(\Omega)\right)}+\left\|p_-^M\right\|_{L^2\left((0, T), H^1(\Omega)\right)}+\left\|n_-^M\right\|_{L^2\left((0, T), H^1(\Omega)\right)}\\
	&\leq E_{1}\exp\left(E_{2}T\right).
\end{split}
\end{equation}
Using the positivity of initial data  \ref{con2},  Assumption \ref{main_assum}, $p_0$, $n_0\geq 0$, %thus, 
$p_-^M(0)=n_-^M(0)=0$, equivalently, $E_{1}=0$, hence \eqref{Gronwall} implies $p_-^M\equiv n_-^M\equiv0$. This concludes the proof of Lemma \ref{Aux_zero_neg_pn}. 
\end{proof}
By using the positivity of the solution  from Lemma \ref{Aux_zero_neg_pn}, we obtain \textit{a priori} estimates uniform with respect to $M$ for short time $T$, namely, the following Theorem: 
\begin{thm}\label{thm_apriori_wrt_M}
	There exists a time $T_1$ such that the functions $\phi^M$, $p^M$ and $n^M$ from Theorem \ref{Exist_Aux_Pro} satisfy the \textit{a priori} estimate
	\begin{equation}\label{Aux_aprioriest_pn_wrt_M}
		\begin{split}
			\esssup_{t\in(0, T_1)}\left\|p^M(t)\right\|_{L^2(\Omega)}^2+\esssup_{t\in(0, T_1)}\left\|n^M(t)\right\|_{L^2(\Omega)}^2\qquad\\
			+\left\|p^M\right\|_{L^2\left((0,T_1),H^1(\Omega)\right)}^2+\left\|n^M\right\|_{L^2\left((0,T_1),H^1(\Omega)\right)}^2
			\leq L_1,
		\end{split}
	\end{equation}
	\begin{equation}\label{Aux_ellip_est_wrt_M}
		\left\|\phi^M\right\|_{L^\infty\left((0, T_1), H^2(\Omega)\right)}+\left\|\frac{\partial\phi^M}{\partial t}\right\|_{L^2\left((0, T_1), H^1(\Omega)\right)}\leq L_2,
	\end{equation}
	where $L_1$ and $L_2$ are  positive constants depending on  $c_0$, $\epsilon_\pm$, $\epsilon_0$, $\mu_\pm$, $\alpha_1$, $p_0$, $n_0$, $\phi_D$, $p_D$, $n_D$ and $\mathbf{v}$.
\end{thm}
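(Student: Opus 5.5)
Thanks to Lemma \ref{Aux_zero_neg_pn}, $p^M,n^M\ge0$, and the truncated recombination factors $\min\{M,-Mn^M/(1+Mp^M)\}$ and $\min\{M,-Mp^M/(1+Mp^M)\}$ are then non-positive. We repeat the energy identity \eqref{Int_est}, with test functions $\mu_-(p^M-p_D)$ and $\mu_+(n^M-n_D)$, but now aim for every constant to be independent of $M$. The price is that we can no longer bound $\phi^M$ through $|G|\le M$.

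\emph{Elliptic bound without $M$.} Since $|G(M,z)|\le|z|$, Proposition \ref{EllipticRegEst} gives
\[
\|\phi^M(t)\|_{H^2(\Omega)}\le c_0\bigl(\epsilon_0^{-1}\|p^M(t)\|_{L^2(\Omega)}+\epsilon_0^{-1}\|n^M(t)\|_{L^2(\Omega)}+\|\phi_D\|_{H^2(\Omega)}\bigr),
\]
and hence $\|\nabla\phi^M\|_{L^6}$ and $\|\nabla\phi^M\|_{L^\infty}$ (via \eqref{Sob_emb_H2}, for the gradient we use $W^{1,6}$ and $L^6$ only) are controlled by $Y(t):=\|p^M(t)\|_{L^2}^2+\|n^M(t)\|_{L^2}^2$.

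\emph{Terms to estimate.} We go through the terms of \eqref{Int_est} in turn.

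\begin{itemize}
\item The $G$-term: $G(p^M-n^M)(p^M-n^M)\ge0$ gives a good sign for the part $-(p^2-n^2)G$. Otherwise, bounding it directly by $\|p^M-n^M\|_{L^2}\,\|p^M+n^M\|_{L^4}^2$ (using $|G|\le|p-n|$) yields cubic terms. These are handled by the Gagliardo--Nirenberg inequality $\|u\|_{L^3}^2\le C\|u\|_{L^2}\|u\|_{H^1}$, valid for $d\le3$, together with Young's inequality. This produces $\varepsilon\|\nabla\cdot\|^2+C(Y^{2}+Y)$ or similar.
\item The ionisation term $J_1$: use $e^{-\alpha_2/|\nabla\phi|}\le1$ and bound it by $\|\nabla\phi^M\|_{L^6}\|n^M\|_{L^3}\|p^M+n^M\|_{L^2}$. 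This gives at most $\varepsilon\|\nabla n^M\|^2+C(1+Y)^3$.
\item The recombination term $J_2$: the parts $\min\{\dots\}\,\mu_-|\nabla\phi|(p^M)^2\le0$ and the $n$ analogue are simply dropped. This is exactly where positivity is used, and the crucial point is that the factor $M$ disappears.
\item The cross terms with $p_D,n_D$: use $|\min\{\dots\}|\,p^M\le Mn^M p^M/(1+Mp^M)\le n^M$, which is uniform in $M$. After this they are treated like $J_1$.
\item The $\mathbf v$-terms: handled exactly as in \eqref{Aux_Int_RHS_5}.
\item The drift terms with $p_D,n_D$ in \eqref{Aux_Int_RHS_4}: these now produce $\|\nabla\phi^M\|_{L^6}\|\nabla p^M\|_{L^2}$, which is bounded by $\varepsilon\|\nabla p^M\|^2+C(1+Y)$.
\end{itemize}

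Collecting everything and absorbing the gradient terms with small $\varepsilon$ yields
\[
Y(t)+\int_0^t\|\nabla(p^M,n^M)\|^2\,\mathrm ds\le k+K\int_0^t z(Y(s))\,\mathrm ds,\qquad z(y)=(1+y)^3,
\]
with $k,K$ independent of $M$. Proposition \ref{Bellman} then gives $Y(t)\le Z^{-1}(Z(k)+Kt)$ as long as $Z(k)+Kt<\int_{u_0}^\infty\mathrm d\tau/z(\tau)$, which holds on an $M$-independent interval $(0,T_1)$. This proves \eqref{Aux_aprioriest_pn_wrt_M}. The first half of \eqref{Aux_ellip_est_wrt_M} then follows from the elliptic bound above.

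\emph{Time derivative of $\phi^M$.} For $\partial_t\phi^M$, differentiate the Poisson equation in time: $-\Delta\partial_t\phi^M=\epsilon_0^{-1}G'(\cdot)\,\partial_t(p^M-n^M)$ with zero Dirichlet data. Subtracting the two parabolic equations, the source terms cancel (the total charge satisfies a conservation law up to the truncations). This gives $\partial_t(p^M-n^M)=-\nabla\cdot\mathbf J$ with $\mathbf J\in L^2(L^2)$, plus truncation remainders bounded uniformly via the estimates above. Testing with $\partial_t\phi^M$ then gives $\|\nabla\partial_t\phi^M\|_{L^2}\lesssim\|\mathbf J\|_{L^2}+\dots$. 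The factor $G'\in\{0,1\}$ is awkward, and I would handle it by working with the difference quotient of $G$, which is Lipschitz with constant $1$.

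\emph{Main obstacle.} The main obstacle is the closing step: checking carefully that every cubic term is absorbed with only $L^2$--$H^1$ control, so that a superlinear Bihari inequality with $M$-free constants results. The $\partial_t\phi^M$ bound is a secondary technical difficulty.
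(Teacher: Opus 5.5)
Your proof of \eqref{Aux_aprioriest_pn_wrt_M} and of the $L^\infty(0,T_1;H^2(\Omega))$ bound on $\phi^M$ is essentially the paper's route. You use the same test functions, positivity from Lemma \ref{Aux_zero_neg_pn} to discard $(|p^M|^2-|n^M|^2)G$ and the quadratic recombination parts, the $M$-free elliptic bound via $|G(M,z)|\le|z|$, and a superlinear Bihari inequality. The paper gets $z(\tau)=\tau+\tau^2$; your $(1+y)^3$ is cruder but works equally well.

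On one point you are more careful than the paper. It asserts $J_2\le0$ after writing $|p^M|^2+p^Mp_D$, but $p^M(p^M-p_D)$ actually produces the nonnegative cross term $\mu_-^2\eta_0|\nabla\phi^M|\,\frac{Mn^Mp^M}{1+Mp^M}\,p_D$ (and its $n$-analogue). That term genuinely needs your bound $Mn^Mp^M/(1+Mp^M)\le n^M$.

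The gap is the bound on $\partial_t\phi^M$ in $L^2(0,T_1;H^1(\Omega))$. Your cancellation is correct: for $p^M,n^M\ge0$ both truncated recombination terms equal $-\mu_-\eta_0|\nabla\phi^M|Mn^Mp^M/(1+Mp^M)$. Hence $\partial_t(p^M-n^M)=\nabla\cdot\mathbf J$ with $\mathbf J$ bounded in $L^2(L^2)$ uniformly in $M$. However, the Poisson equation differentiates to $-\Delta\partial_t\phi^M=\epsilon_0^{-1}\partial_tG(M,p^M-n^M)$. You therefore need $\partial_tG(M,p^M-n^M)$ bounded in $L^2(0,T_1;(H^1_D)')$, and the Lipschitz property of $G$ does not give this.

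Set $D_hw(t)=(w(t+h)-w(t))/h$ and $u=p^M-n^M$. You only get the pointwise bound $|D_hG(M,u)|\le|D_hu|$, while $D_hu$ is controlled only in the dual norm, where pointwise domination says nothing. Alternatively, write $D_hG(M,u)=a_hD_hu$ with $a_h\in[0,1]$ and test with $D_h\phi^M$. You then need control of $\nabla(a_hD_h\phi^M)$, but $a_h$ (formally $\chi_{\{|u|<M\}}$) has no uniform spatial regularity. Indeed $\chi\,\nabla\cdot\mathbf J=\nabla\cdot(\chi\mathbf J)-\mathbf J\cdot\nabla\chi$, and $\nabla\chi$ is a measure on the level set $\{|p^M-n^M|=M\}$. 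So the difference-quotient step fails, and the second half of \eqref{Aux_ellip_est_wrt_M} is not established by your argument.

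For context, the paper's own justification here is only a citation of elliptic results from Gilbarg--Trudinger, which do not address this issue either. Your divergence-form argument does go through verbatim for the untruncated limit problem, where $G'\equiv1$. The uniform bound on $\partial_t\phi^M$ also does not appear to be used anywhere later in the paper.
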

\begin{proof}
By using the non-negativity of $p^M$ and $n^M$ and \eqref{J_2}, we have
\[\begin{split}\int_{0}^{t}\int_\Omega\left(\left|p^M\right|^2-\left|n^M\right|^2\right)G\left(M, p^M-n^M\right) \I x \I y \I s \geq 0,\qquad\quad\quad\\ 
J_2=\mu_-^2\eta_0\int_{0}^{t}\int_\Omega\left[\left|\nabla\phi^M\right|\left(-\frac{M n^M}{1+Mp^M}\right)\left(\left|p^M\right|^2+p^Mp_D\right)\right] \I x \I y \I s \quad\\
+\mu_-\mu_+\eta_0\int_{0}^{t}\int_\Omega\left[\left|\nabla\phi^M\right|\left(-\frac{M p^M}{1+Mp^M}\right)\left(\left|n^M\right|^2+n^Mn_D\right)\right] \I x \I y \I s \leq0.\end{split}\]
According to the integral identity \eqref{Int_est} and estimates \eqref{Aux_Int_LHS}
\begin{equation}\label{Aux_Pos_Est}\begin{split}&a_{\min}\left[\int_\Omega\left(\left|p^M(t)\right|^2+\left|n^M(t)\right|^2\right)\mathrm{d}x \, \mathrm{d}y+\int_{0}^{t}\int_\Omega\left(\left|\nabla p^M\right|^2+\left|\nabla n^M\right|^2\right) \I x \I y \I s \right]\\
		\leq& \int_\Omega\left(\frac{\mu_-}{2}\left|p_0\right|^2+\frac{\mu_+}{2}\left|n_0\right|^2+\mu_-p_Dp^M(t)-\mu_-p_Dp_0+\mu_+n_Dn^M(t)-\mu_+n_Dn_0\right)\mathrm{d}x \, \mathrm{d}y\\
		&+\int_{0}^{t}\int_\Omega\left(\epsilon_+\mu_-\nabla p_D\cdot\nabla p^M+\epsilon_-\mu_+\nabla n_D\cdot\nabla n^M\right) \I x \I y \I s \\
		&+\frac{\mu_-\mu_+}{2}\int_{0}^{t}\int_\Omega\left(p_D\nabla p^M-p^M\nabla p_D-\left(n_D\nabla n^M-n^M\nabla n_D\right)\right)\cdot\nabla\phi^M \I x \I y \I s \\
		&+\frac{\mu_+\mu_-}{2\epsilon_0}\int_{0}^{t}\int_\Omega\left(p^Mp_D-n^Mn_D\right)G\left(M, p^M-n^M\right) \I x \I y \I s +J_1+J_3\\
		=:&I_1+I_2+I_3+I_4+J_1+J_3. % \quad \mbox{\color{red}(The order of the equals sign and colon look strange to me.)} 
\end{split}\end{equation}
	Here $J_1$ and $J_3$ are defined by \eqref{J_1} and \eqref{Aux_Int_RHS_5} respectively.
	Now \eqref{Aux_Int_RHS_1}, \eqref{Aux_Int_RHS_3} and \eqref{Aux_Int_RHS_4} imply
	\begin{equation}\label{Aux_Int_RHS_pos_1}
		\begin{split}
			I_1+I_2+I_3
			\leq&\varepsilon\int_\Omega\left(\left|p^M(t)\right|^2+\left|n^M(t)\right|^2\right)\mathrm{d}x \, \mathrm{d}y
			+2\varepsilon\int_{0}^{t}\int_\Omega\left(\left|\nabla p^M\right|^2+\left|\nabla n^M\right|^2\right) \I x \I y \I s \\
			+&\left(\frac{\mu_-^2+\mu_+^2}{4\varepsilon}+\frac{3(\mu_-+\mu_+)}{2}\right)\left(\|p_D\|_{L^2(\Omega)}^2+\|p_0\|_{L^2(\Omega)}^2+\|n_D\|_{L^2(\Omega)}^2+\|n_0\|_{L^2(\Omega)}^2\right)\\
			+&H_1t
			+H_2\int_{0}^{t}\int_\Omega\left(\left|p^M\right|^2+\left|n^M\right|^2\right) \I x \I y \I s ,		\end{split}
		\end{equation}where
\[\begin{split}
			H_1&=\frac{1}{4\varepsilon}\max\left\{\epsilon_+^2\mu_-^2, \epsilon_-^2\mu_+^2\right\}\left(\|\nabla p_D\|_{L^2(\Omega)}^2+\|\nabla n_D\|_{L^2(\Omega)}^2\right)\\
			&+\frac{\mu_-^2\mu_+^2c_0^2}{4}\left(\frac{c_0^2}{2\varepsilon}+1\right)\left(\|p_D\|_{W^{1,3}(\Omega)}^2+\|n_D\|_{W^{1,3}(\Omega)}^2\right)\|\phi_D\|_{H^2(\Omega)}^2,\\
			H_2&=\frac{1}{2}+\frac{\mu_-^2\mu_+^2c_0^2}{4}\left(\frac{c_0^2}{2\varepsilon}+1\right)\left(\|p_D\|_{W^{1,3}(\Omega)}^2+\|n_D\|_{W^{1,3}(\Omega)}^2\right).
\end{split}\]
	From estimate \eqref{J_1},   Sobolev embedding \eqref{Sob_emb} from Proposition \ref{Sob_emb_reltn} and elliptic estimate \eqref{elliptic_reg} from Proposition \ref{EllipticRegEst}, we obtain the following estimates for $I_4$ and $J_1$: {
\begin{equation}\label{J_1_pos}
	\begin{split}
		|I_4|&\leq 2\varepsilon \int_{0}^{t}\int_\Omega\left(\left| \nabla p^M\right|^2+\left|\nabla n^M\right|^2\right) \I x \I y \I s \\
		&+\frac{\mu_+^2\mu_-^2c_0^2}{8\epsilon_0^2\varepsilon/c_0^2}\left(\|\nabla p_D\|_{L^2(\Omega)}^2+\|\nabla n_D\|_{L^2(\Omega)}^2\right)\int_{0}^{t}\int_\Omega\left(\left|p^M\right|^2+\left|n^M\right|^2\right) \I x \I y \I s , \\
		|J_1|&\leq \alpha_{1}\mu_-\left(\mu_++\mu_-\right)\int_{0}^{t}\int_\Omega\left[\left|\nabla\phi^M\right|\left|n^M\right|\left(\left|p^M\right|+\left|n^M\right|+\left|p_D+n_D\right|\right)\right] \I x \I y \I s \\
		&\leq\alpha_{1}\mu_-\left(\mu_++\mu_-\right)\int_{0}^{t}\left\|\nabla\phi^M(s)\right\|_{L^6(\Omega)}\left\|n^M(s)\right\|_{L^3(\Omega)}\left(\left\|p^M(s)\right\|_{L^2(\Omega)}+\left\|n^M(s)\right\|_{L^2(\Omega)}\right)	\mathrm{d}s\\
		&+\alpha_{1}\mu_-\left(\mu_++\mu_-\right)\int_{0}^{t}\left\|\nabla\phi^M(s)\right\|_{L^6(\Omega)}\left\|n^M(s)\right\|_{L^2(\Omega)}\left(\left\|p_D\right\|_{L^3(\Omega)}+\left\|n_D\right\|_{L^3(\Omega)}\right)	\mathrm{d}s	\\
		&\leq \varepsilon\int_{0}^{t}\int_\Omega\left(\left|\nabla p^M\right|^2+\left|\nabla n^M\right|^2\right) \mathrm{d}x \, \mathrm{d}y \mathrm{d}s+H_3\int_{0}^{t}\int_\Omega\left(\left|p^M\right|^2+\left|n^M\right|^2\right) \I x \I y \I s \\
		&+\frac{\alpha_1^2\mu_-^2(\mu_++\mu_-)^2c_0^2}{2\epsilon_0^2\varepsilon/c_0^2}\int_{0}^{t}\left(\int_\Omega\left(\left| p^M\right|^2+\left| n^M\right|^2\right) \mathrm{d}x \, \mathrm{d}y \right)^2\mathrm{d}s+c_0^2\|\phi_D\|^2_{H^2(\Omega)}t,
	\end{split}
\end{equation}
where 
\[H_3=\frac{\alpha_1^2\mu_-^2(\mu_++\mu_-)^2c_0^2}{2\epsilon_0^2\varepsilon/c_0^2}\|\phi_D\|_{H^2(\Omega)}^2+\alpha_1^2\mu_-^2(\mu_++\mu_-)^2c_0^2\left(\|\nabla p_D\|_{L^2(\Omega)}^2+\|\nabla n_D\|_{L^2(\Omega)}^2\right)+c_0^2/\epsilon_0^2.\]
Combining estimate \eqref{Aux_Int_RHS_5} for $J_3$ with estimates \eqref{Aux_Int_RHS_pos_1}-\eqref{J_1_pos} and  choosing $\varepsilon>0$ sufficiently small, such that 
$\tilde{b}_{\min}=a_{\min}-6\varepsilon>0,$
then \eqref{Aux_Pos_Est}  simplifies to 
\begin{equation}\label{Gronwall_est_pos}
\begin{split}	&\tilde{b}_{\min}\left(\int_\Omega\left(\left|p^M(t)\right|^2+\left|n^M(t)\right|^2\right)\mathrm{d}x \, \mathrm{d}y+\int_{0}^{t}\int_\Omega\left(\left|\nabla p^M\right|^2+\left|\nabla n^M\right|^2\right) \I x \I y \I s \right)\\
\leq& H_4+H_5t+H_6\left[\int_{0}^{t}\int_\Omega \left(\left|p^M\right|^2+\left|n^M\right|^2\right) \I x \I y \I s +\int_{0}^{t}\left(\int_\Omega\left(\left| p^M\right|^2+\left|n^M\right|^2\right) \mathrm{d}x \, \mathrm{d}y \right)^2\mathrm{d}s\right].
\end{split}
\end{equation}
Here 
\[H_4=\left(\frac{\mu_-^2+\mu_+^2}{4\varepsilon}+\frac{3(\mu_-+\mu_+)}{2}\right)\left(\|p_D\|_{L^2(\Omega)}^2+\|p_0\|_{L^2(\Omega)}^2+\|n_D\|_{L^2(\Omega)}^2+\|n_0\|_{L^2(\Omega)}^2\right),\]
\[ H_5=H_1+c_0^2\|\phi_D\|_{H^2(\Omega)}^2+\frac{\mu_-^2+\mu_+^2}{4\varepsilon}\esssup_{t\in(0, T)}\|\mathbf{v}(t)\|^2_{H^2(\Omega)}\left(\|p_D\|_{L^2(\Omega)}^2+\|n_D\|_{L^2(\Omega)}^2\right),\]
\[\begin{split}H_6&=H_2+H_3+\frac{\mu_+^2\mu_-^2c_0^2}{8\epsilon_0^2\varepsilon/c_0^2}\left(\|\nabla p_D\|_{L^2(\Omega)}^2+\|\nabla n_D\|_{L^2(\Omega)}^2\right)\\
	&+\frac{\mu_-^2+\mu_+^2}{4\varepsilon}\esssup_{t\in(0, T)}\|\mathbf{v}(t)\|^2_{H^2(\Omega)}+\frac{\alpha_1^2\mu_-^2(\mu_++\mu_-)^2c_0^2}{\varepsilon/c_0^2}.\end{split}\]}
According to  \eqref{Bellmanineqy_1} in Proposition \ref{Bellman},  we note that, from \eqref{Gronwall_est_pos},
\[Y(t)=\int_\Omega\left(\left|p^M(t)\right|^2+\left|n^M(t)\right|^2\right) \mathrm{d}x \, \mathrm{d}y +\int_{0}^{t}\int_\Omega\left(\left|\nabla p^M\right|^2+\left|\nabla n^M\right|^2\right) \I x \I y \I s ,\] \[\tilde{H}_j=\frac{H_j}{\tilde{b}_{\min}},\quad j=4,~5,~6,\quad k=\tilde{H}_4+\tilde{H}_5t,\quad K=\tilde{H}_6,\quad F(t)=1, \quad z(\tau)=\tau+\tau^2,\]
Now \eqref{Bellmanineqy_2} from Proposition \ref{Bellman} implies
\begin{equation}\label{BellmanIneqy}Y(t)\leq \frac{\left(\tilde{H}_4+\tilde{H}_5t\right)e^{\tilde{H}_6t}}{1+\tilde{H}_4+\tilde{H}_5t-\left(\tilde{H}_4+\tilde{H}_5t\right)e^{\tilde{H}_6t}}.\end{equation}
Inequality \eqref{BellmanIneqy} holds provided 
\[h(t):=1+\tilde{H}_4+\tilde{H}_5t-\left(\tilde{H}_4+\tilde{H}_5t\right)e^{\tilde{H}_6t}>0.\]
As $h(t)$ is a non-increasing function of $t$, 
there exists a time $T_1$, depending on $\tilde{H}_j$, $j=4,~5,~6$, such that, for all $t\in (0, T_1)$, $h(t)>0$, hence estimate \eqref{Aux_aprioriest_pn_wrt_M} holds.

We note that the  right-hand sides of \eqref{BellmanIneqy} and \eqref{Aux_aprioriest_pn_wrt_M} 
are finite and independent of $M$ by 
the assumptions on the initial and boundary conditions, \ref{con1}-\ref{con3}, 
from Assumption~\ref{main_assum}. We use estimate \eqref{elliptic_reg} from Proposition \ref{EllipticRegEst}, together with inequality \eqref{Aux_aprioriest_pn_wrt_M}, Theorem 8.3 and Corollary 8.7 from  \cite{gilbarg1977elliptic} to estimate $\partial\phi^M/\partial t$. We then conclude that the solution of equation \eqref{Aux_ellip_eq} satisfies the estimate
\eqref{Aux_ellip_est_wrt_M}. 

This concludes the proof of Theorem \ref{thm_apriori_wrt_M}.
\end{proof}
Analogous to  \cite{naumann1995existence}, we establish the following optimal bounds estimates for the solution $\left(\phi^M, p^M, n^M\right)$ from  Theorem \ref{thm_apriori_wrt_M}. 
\begin{cor}\label{Cor_Aux_der_est}
	The solution $\left(\phi^M, p^M, n^M\right)$ from  Theorem \ref{thm_apriori_wrt_M} satisfies 
	\begin{equation}\label{Aux_opt_est1}
    \left\|p^M\right\|_{L^\sigma\left(\Omega\times(0, T_1)\right)},\quad \left\|p^M\right\|_{L^\sigma\left(\Omega\times(0, T_1)\right)}\leq L_3,\quad \forall~ 2\leq\sigma<\infty,	\end{equation}
		\begin{equation}\label{Aux_opt_est2}
			\left\|\frac{\partial p^M}{\partial t}\right\|_{L^2\left((0, T_1); H^{-1}(\Omega)\right)},\quad \left\|\frac{\partial n^M}{\partial t}\right\|_{L^2\left((0, T_1); H^{-1}(\Omega)\right)}\leq L_4,
		\end{equation}
	where $L_3$ and $L_4$ are  positive constants depending on $L_1$, $L_2$ in Theorem \ref{thm_apriori_wrt_M}.
\end{cor}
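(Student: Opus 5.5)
The plan is to prove \eqref{Aux_opt_est2} last, by duality, and to obtain \eqref{Aux_opt_est1} by a Moser-type iteration on powers of $p^M$ and $n^M$, in the spirit of \cite{naumann1995existence}. Remark \ref{comp1} alone does not suffice for \eqref{Aux_opt_est1}. Interpolating $L^\infty((0,T_1);L^2(\Omega))\cap L^2((0,T_1);H^1(\Omega))$ only gives $\sigma\le 10/3$ for $d=3$ and $\sigma\le 4$ for $d=2$. To reach every finite $\sigma$ I will exploit three facts that hold uniformly in $M$ on $(0,T_1)$: the non-negativity from Lemma \ref{Aux_zero_neg_pn}, and the bounds of Theorem \ref{thm_apriori_wrt_M}, namely $\|\phi^M\|_{L^\infty((0,T_1);H^2(\Omega))}\le L_2$ and hence $\|\nabla\phi^M\|_{L^\infty((0,T_1);L^6(\Omega))}\le c_0L_2$.

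\emph{Step 1 (test functions).} For $k\ge 2$ test \eqref{Aux_intform_p} with $f=(p^M-R_b)_+^{k-1}$, truncated at a level $N$ if needed and then letting $N\to\infty$. This function vanishes on $\partial\Omega_D$ by \ref{con1}, so it lies in $H^1_D(\Omega)$ and no boundary terms appear. Do the same for $n^M$ in \eqref{Aux_intform_n}. Since $p^M\le R_b+(p^M-R_b)_+$, a bound on $(p^M-R_b)_+$ in $L^k$ gives one on $p^M$.

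\emph{Step 2 (the individual terms).} Write $w=(p^M-R_b)_+^{k/2}$.
\begin{itemize}
\item The diffusion term produces $c_k\int|\nabla w|^2$.
\item The drift term, after the symmetrisation in \eqref{Aux_intform_p}, reduces to an integral of the type $\int w^2\,\Delta\phi^M$ plus lower-order terms in $R_b$. I will bound it by $\|\Delta\phi^M\|_{L^2}\|w\|_{L^4}^2$. The Gagliardo--Nirenberg inequality $\|w\|_{L^4}^2\le C\|w\|_{L^2}^{2-d/2}\|w\|_{H^1}^{d/2}$ then gives $\varepsilon\|\nabla w\|_2^2+C_\varepsilon\|w\|_2^2$, because $\|\Delta\phi^M\|_{L^2}\le L_2$ is already controlled.
\item The source term $\alpha_1\mu_-|\nabla\phi^M|\,n^M(p^M-R_b)_+^{k-1}$ is handled with $\nabla\phi^M\in L^6$ and Hölder/Young's inequality, coupling it to the $n$-estimate. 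The recombination parts of $F_1$ and $F_2$ have a favourable sign by non-negativity and are dropped.
\item The convection term $\int\nabla p^M\cdot\mathbf v\,(p^M-R_b)_+^{k-1}=\frac1k\int\mathbf v\cdot\nabla (p^M-R_b)_+^{k}$ vanishes, since $\nabla\cdot\mathbf v=0$ and $\nu\cdot\mathbf v=0$.
\end{itemize}

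\emph{Step 3 (iteration).} Adding the $p$- and $n$-inequalities gives, for each $k$, a linear Gr\"onwall inequality for $\int\bigl((p^M-R_b)_+^k+(n^M-R_b)_+^k\bigr)$. Linearity holds because $\phi^M$ is already bounded, so Gr\"onwall yields $L^\infty((0,T_1);L^k(\Omega))$ bounds, together with $\nabla w\in L^2$, starting from $k=2$ (Theorem \ref{thm_apriori_wrt_M}). The initial data enter through $p_0,n_0\in W^{2,\sigma}\subset L^\infty$. Inducting over $k$ (each step uses the previous $L^{k}$ bound of $n^M$ for the coupling term) gives \eqref{Aux_opt_est1} for every finite $\sigma$, with constants depending only on $L_1,L_2$ and the data.

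\emph{Step 4 (time derivatives).} For \eqref{Aux_opt_est2}, take $f\in H^1_D(\Omega)$ with $\|f\|_{H^1}\le1$ in \eqref{Aux_intform_p} and estimate $\langle\partial_tp^M,f\rangle$ term by term.
\begin{itemize}
\item $\|\nabla p^M\|_{L^2}$ is square integrable by \eqref{Aux_aprioriest_pn_wrt_M}.
\item $p^M\nabla\phi^M$ is bounded in $L^2$ by $\|p^M\|_{L^3}\|\nabla\phi^M\|_{L^6}$, and $\|p^M\|_{L^3}\le C\|p^M\|_{H^1}$, which is in $L^2$ in time.
\item $p^M\Delta\phi^M f$ is bounded by $\|p^M\|_{L^3}\|\Delta\phi^M\|_{L^2}\|f\|_{L^6}$.
\item The reaction terms are bounded by $\|\nabla\phi^M\|_{L^6}\|n^M\|_{L^{3/2}}\|f\|_{L^6}$ and $\|p^M\|_{L^{3/2}}\|f\|_{L^6}$, the latter using $|\min\{M,-Mn/(1+Mp)\}\,p|\le n$.
\item $\nabla p^M\cdot\mathbf v\, f$ is bounded by $\|\mathbf v\|_{L^\infty}\|\nabla p^M\|_{L^2}$.
\end{itemize}
Squaring and integrating over $(0,T_1)$ gives $L_4$, and the same argument applies to $n^M$.

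The main obstacle is Step 2, specifically the drift term $\int w^2\Delta\phi^M$. It is only controllable because $\Delta\phi^M$ is bounded in $L^\infty(L^2)$ independently of $M$ and $d\le3$ permits the Gagliardo--Nirenberg absorption. If that absorption fails for some $k$, I would instead rewrite $-\Delta\phi^M=G(M,p^M-n^M)/\epsilon_0$ and use its sign against $p^M$, as in the proof of Theorem \ref{thm_apriori_wrt_M}.
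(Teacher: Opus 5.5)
Your proposal is correct. Your Step 4 is essentially the paper's duality argument, but for \eqref{Aux_opt_est1} you take a genuinely different route.

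The paper does not test with powers. It runs a Stampacchia level-set argument:
\begin{itemize}
\item it tests with $(p^M-\delta)^+$ and $(n^M-\delta)^+$ for all $\delta\ge\delta_0$;
\item it bounds the energy of these truncations by $L_8\,w(\delta)^{1/2}$, where $w(\delta)=\mathrm{meas}\,P_\delta+\mathrm{meas}\,Q_\delta$;
\item it combines this with the space-time inequality \eqref{L4} to get $w(d)\le c_0^2L_8^2(d-\delta)^{-4}w(\delta)$;
\item an iteration lemma then gives $w(\delta)\le a_1e^{-a_2\delta}$.
\end{itemize}
This yields every $L^\sigma$ bound at once. In fact it gives exponential integrability, with $L^\sigma$ norms growing only linearly in $\sigma$.

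Your Moser-type scheme instead closes a separate linear Gr\"onwall inequality for each $k$, and its constants deteriorate as $k\to\infty$. In exchange it gives the stronger bound in $L^\infty((0,T_1);L^k(\Omega))$. It also uses Gagliardo--Nirenberg only at fixed time, where the power $d/2<2$ of $\|w\|_{H^1}$ allows absorption for both $d=2$ and $d=3$. This is a real advantage. The interpolation inequality stated before \eqref{L4} carries the two-dimensional exponent $1-2/\upsilon$; in $d=3$ it should be $3/2-3/\upsilon$. As you observe, in $d=3$ the energy space embeds only into $L^{10/3}(\Omega\times(0,T_1))$. So \eqref{L4}, on which the paper's iteration rests, holds as stated only for $d=2$.

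Two small remarks.

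First, your fallback for the drift term is actually the cleaner option. Set $u=(p^M-R_b)_+$ and $v=(n^M-R_b)_+$, and use the weights $\mu_-$, $\mu_+$ as in the paper. Then the two drift contributions add up on the left-hand side to
$\frac{\mu_+\mu_-}{\epsilon_0}\int_\Omega\bigl(\frac{k-1}{k}(u^k-v^k)+R_b(u^{k-1}-v^{k-1})\bigr)G(M,p^M-n^M)\ge 0$.
This is the same monotonicity used in the proof of Theorem \ref{thm_apriori_wrt_M}, so no absorption is needed for this term.

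Second, in Step 4 the recombination term is $\eta_0\mu_-|\nabla\phi^M|\,Mn^Mp^M/(1+Mp^M)\le\eta_0\mu_-|\nabla\phi^M|\,n^M$. It is therefore bounded by $\|\nabla\phi^M\|_{L^6}\|n^M\|_{L^{3/2}}\|f\|_{L^6}$, not by $\|p^M\|_{L^{3/2}}\|f\|_{L^6}$. The conclusion is unaffected.
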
	
\begin{proof}
    We note that for any $\upsilon\in(2, \infty)$ (for $\dim\Omega=2$) and $\upsilon\in (2, 6]$ (for $\dim\Omega=3$), the  inequality 
    \[\|f\|_{L^\upsilon(\Omega)}\leq c_0\left(\|f\|^{2/\upsilon}_{L^2(\Omega)}\|\nabla f\|^{1-2/\upsilon}_{L^2(\Omega)}+\|f\|_{L^2(\Omega)}\right),\quad \forall~ f\in H^1(\Omega)\] holds. Thus for a function $u\in L^\infty\left((0, T_1), L^2(\Omega)\right)\cap L^2\left((0, T_1), H^1(\Omega)\right)$, it follows that 
    \begin{equation}\label{L4}\begin{split}\left(\int_0^{T_1}\int_\Omega|u|^4 \I x \I y \I t \right)^{1/2}\leq c_0\left(\esssup_{t\in (0, T_1)}\int_\Omega\left|u(t)\right|^2 \mathrm{d}x \, \mathrm{d}y 
    +\int_{0}^{T_1}\int_\Omega\left|\nabla u\right|^2 \I x \I y \I s \right),\end{split}\end{equation}
    see, for example, references \cite{gagliardo1959ulteriori}, \cite{naumann1995existence} and \cite{nirenberg1959elliptic}. Hence estimate \eqref{Aux_aprioriest_pn_wrt_M} from Theorem \ref{thm_apriori_wrt_M} and \eqref{L4} implies 
    \begin{equation}\label{p4}
\left(\int_0^{T_1}\int_\Omega\left(\left|p^M\right|^4+\left|n^M\right|^4\right) \I x \I y \I t \right)^{1/2}\leq L_5,
    \end{equation}
    where $L_5$ is a positive constant depending on $L_1$.

    Now we define, for any $\delta>0$, 
    \begin{equation}\label{w_def}\begin{split}
        P_\delta&=\left\{(x,t)\in\Omega\times(0, T_1):\quad p^M(x,t)>\delta\right\},\\
        Q_\delta&=\left\{(x,t)\in\Omega\times(0, T_1):\quad n^M(x,t)>\delta\right\} , \\
        w(\delta)&=\mathrm{meas}~ P_\delta + \mathrm{meas}~ Q_\delta,
    \end{split}\end{equation}
    and $w(\delta)$ is non-increasing on $(0, \infty)$ from \eqref{Aux_aprioriest_pn_wrt_M}. 

    We set
    \[\delta_0=\max\left\{\esssup_{\partial\Omega_D} p_0, ~ \esssup_{\partial\Omega_D} n_0, ~ \esssup_{\partial\Omega_D} p_D, ~ \esssup_{\partial\Omega_D} n_D\right\},\]
    (c.f. Assumption \ref{main_assum} (\ref{con1}-\ref{con3})). Using the boundary conditions \eqref{Aux_bdyval}, we have 
    \[\left(p^M-\delta_0\right)^+,\quad \left(n^M-\delta_0\right)^+\in H_D^1(\Omega)\quad\text{for a.e.}\quad t\in (0, T_1)\quad\text{and}\quad\forall~ \delta\geq \delta_0.\]
    From \eqref{L4}, we find
    \begin{equation}\label{Aux_w}
       \begin{split} &(d-\delta)^2\left(w(d)\right)^{1/2}\\
       \leq &\left(\int_0^{T_1}\int_\Omega\left(\left(p^M-\delta\right)^+\right)^4 \I x \I y \I t \right)^{1/2}+\left(\int_0^{T_1}\int_\Omega\left(\left(n^M-\delta\right)^+\right)^4 \I x \I y \I t \right)^{1/2}\\
       \leq & c_0\esssup_{t\in(0, T_1)}\int_\Omega\left(\left|\left(p^M(t)-\delta\right)^+\right|^2+\left|\left(n^M(t)-\delta\right)^+\right|^2\right) \mathrm{d}x \, \mathrm{d}y \\
       &+c_0\int_{0}^{T_1}\int_\Omega\left(\left|\nabla \left(p^M-\delta\right)^+\right|^2+\left|\nabla\left( n^M-\delta\right)^+\right|^2\right) \I x \I y \I s ,\end{split}
    \end{equation}
    for all $d>\delta\geq \delta_0$.

    On the other hand, we take, in \eqref{Aux_intform_p} and \eqref{Aux_intform_n}, that 
    \[\begin{split}f(x,y,s)=\begin{cases}
		\mu_-\left(p^M(x,y,s)-\delta\right)^+ &\forall s\in(0, t],\\ 0 &\forall s\in(t, T_1),
	\end{cases}~ g(x,y,s)=\begin{cases}
	\mu_+\left(n^M(x,y,s)-\delta\right)^+&\forall s\in(0, t],\\ 0, &\forall s\in(t, T_1),
	\end{cases}\end{split}\]
    which are admissible for a.e. $t\in (0, T_1)$ and $\forall~\delta\geq\delta_0$,  add %two identities {\it\color{red}(Which two?)} 
the formulas for $f$ and $g$ and use the non-negativity of $p^M$ and $n^M$, resulting in the following estimate:
    \begin{equation}\label{Aux_cor_est_1}\begin{split}
        &a_{\min}\int_\Omega\left(\left|\left(p^M(t)-\delta\right)^+\right|^2+\left|\left(n^M(t)-\delta\right)^+\right|^2\right) \mathrm{d}x \, \mathrm{d}y \\
        &+a_{\min}\int_0^t\int_\Omega\left(\left|\nabla\left(p^M-\delta\right)^+\right|^2+\left|\nabla\left(n^M-\delta\right)^+\right|^2\right) \I x \I y \I s \\
        \leq&\frac{\mu_-\mu_+\delta}{2}\int_0^t\int_\Omega\left(\left(\nabla\left(n^M-\delta\right)^+-\nabla\left(p^M-\delta\right)^+\right)\cdot\nabla\phi^M\right) \I x \I y \I s \\
        &+\mu_-\alpha_1\int_{0}^t\int_\Omega\left\{ e^{-\alpha_2/\left|\nabla\phi^M\right|}\left|\nabla\phi^M\right|\left[\mu_-n^M\left(p^M-\delta\right)^++\mu_+n^M\left(n^M-\delta\right)^+\right]\right\} \I x \I y \I s \\
        &-\int_{0}^{t}\int_\Omega\left(\left(\mu_-\left(p^M-\delta\right)^+\nabla p^M+\mu_+\left(n^M-\delta\right)^+\nabla n^M\right)\cdot\mathbf{v}\right) \I x \I y \I s .
    \end{split}\end{equation}
    According to estimates \eqref{Sob_emb} from Proposition \ref{Sob_emb_reltn}, \eqref{elliptic_reg} from Proposition \ref{EllipticRegEst}, \eqref{Aux_aprioriest_pn_wrt_M}-\eqref{Aux_ellip_est_wrt_M} from Theorem \ref{thm_apriori_wrt_M}, \eqref{L4} and the H\"older inequality, we have 
    \begin{equation}\label{Aux_cor_est_2}
        \begin{split}
            &\int_0^t\int_\Omega\left(\left(\nabla\left(n^M-\delta\right)^+-\nabla\left(p^M-\delta\right)^+\right)\cdot\nabla\phi^M\right) \I x \I y \I s \\
            \leq& \left(\mathrm{meas}~ P_\delta\right)^{1/4}\left(\int_0^t\int_\Omega\left(\left|\nabla\left(p^M-\delta\right)^+\right|^2\right) \I x \I y \I s \right)^{1/2}\left(\int_{0}^{t}\int_\Omega\left(\left|\nabla\phi^M\right|^4\right) \I x \I y \I s \right)^{1/4}\\
            &+ \left(\mathrm{meas}~ Q_\delta\right)^{1/4}\left(\int_0^t\int_\Omega\left(\left|\nabla\left(n^M-\delta\right)^+\right|^2\right) \I x \I y \I s \right)^{1/2}\left(\int_{0}^{t}\int_\Omega\left(\left|\nabla\phi^M\right|^4\right) \I x \I y \I s \right)^{1/4}\\
            \leq&\varepsilon\int_0^t\int_\Omega\left(\left|\nabla\left(p^M-\delta\right)^+\right|^2+\left|\nabla\left(n^M-\delta\right)^+\right|^2\right) \mathrm{d}x \, \mathrm{d}y +\frac{L_1+L_2}{4\varepsilon}\left(w(\delta)\right)^{1/2}.
        \end{split}
    \end{equation}
   Analogous to the estimates \eqref{J_1} and \eqref{Aux_Int_RHS_5}, combining \eqref{Sob_emb}-\eqref{Holder-Cauchy} from Proposition \ref{Sob_emb_reltn}, \eqref{Aux_aprioriest_pn_wrt_M}-\eqref{Aux_ellip_est_wrt_M} from Theorem \ref{thm_apriori_wrt_M},  \eqref{L4}, \eqref{p4} and Young's inequality implies    {\begin{equation}\label{Aux_cor_est_3}
       \begin{split}
           &\left|\int_{0}^t\int_\Omega\left\{ e^{-\alpha_2/\left|\nabla\phi^M\right|}\left|\nabla\phi^M\right|\left[\mu_-n^M\left(p^M-\delta\right)^++\mu_+n^M\left(n^M-\delta\right)^+\right]\right\} \I x \I y \I s \right|\\
           \leq& (\mu_-+\mu_+) \int_{0}^t \left(\int_{S_{\delta}^p}\left|n^M(s)\right|^2 \mathrm{d}x \, \mathrm{d}y \right)^{1/2} \left(\int_\Omega \left|\nabla \phi^M(s)\right|^6  \mathrm{d}x \, \mathrm{d}y \right)^{1/6} \\
           &\times\left(\int_\Omega\left(\left(p^M(s)-\delta\right)^+\right)^{3} \mathrm{d}x \, \mathrm{d}y \right)^{1/3} \mathrm{d}s\\
           &+(\mu_-+\mu_+) \int_{0}^t \left(\int_{S_{\delta}^n}\left|n^M(s)\right|^2 \mathrm{d}x \, \mathrm{d}y \right)^{1/2} \left(\int_\Omega \left|\nabla \phi^M(s)\right|^6  \mathrm{d}x \, \mathrm{d}y \right)^{1/6} \\
           &\times\left(\int_\Omega\left(\left(n^M(s)-\delta\right)^+\right)^{3} \mathrm{d}x \, \mathrm{d}y \right)^{1/3} \mathrm{d}s\\
           \leq& (\mu_-+\mu_+)L_2c_0 \int_{0}^t \left(\int_{S_{\delta}^p}\left|n^M(s)\right|^2 \mathrm{d}x \, \mathrm{d}y \right)^{1/2} \left(\int_\Omega\left|\nabla\left(p^M(s)-\delta\right)^+\right|^{2} \mathrm{d}x \, \mathrm{d}y \right)^{1/2} \mathrm{d}s\\
           &+(\mu_-+\mu_+)L_2c_0 \int_{0}^t \left(\int_{S_{\delta}^n}\left|n^M(s)\right|^2 \mathrm{d}x \, \mathrm{d}y \right)^{1/2} \left(\int_\Omega\left|\nabla\left(n^M(s)-\delta\right)^+\right|^{2} \mathrm{d}x \, \mathrm{d}y \right)^{1/2} \mathrm{d}s\\
           \leq& \varepsilon\int_0^t\int_\Omega\left(\left|\nabla\left(p^M-\delta\right)^+\right|^{2}+\left|\nabla\left(n^M-\delta\right)^+\right|^{2}\right) \I x \I y \I s +\frac{L_6}{\varepsilon}(w(\delta))^{1/2},\end{split}\end{equation}}
 where $L_6$ is a positive constant depending on $\mu_\pm$, $c_0$, $L_2$ and $L_5$,
           \[S_\delta^p=\left\{(x,y)\in\Omega:\quad p(x,y,t)>\delta\right\} \quad \mbox{and} \quad S_\delta^n=\left\{(x,y)\in\Omega:\quad n(x,y,t)>\delta\right\}.\]
            {The embedding relation \eqref{Sob_emb_H2} from Proposition \ref{Sob_emb_reltn} and estimates \eqref{Aux_aprioriest_pn_wrt_M}-\eqref{Aux_ellip_est_wrt_M} from Theorem \ref{thm_apriori_wrt_M} imply
           \begin{equation}\label{Aux_cor_est_4}\begin{split}
           &\left|-\int_{0}^{t}\int_\Omega\left(\left(\mu_-\left(p^M-\delta\right)^+\nabla p^M+\mu_+\left(n^M-\delta\right)^+\nabla n^M\right)\cdot\mathbf{v}\right) \I x \I y \I s \right|\\
        \leq&(\mu_-+\mu_+)\left(\mathrm{meas}~ P_\delta\right)^{1/4}\esssup_{t\in(0, T_1)}\left\|\mathbf{v}(t)\right\|_{ H^2(\Omega)}\left(\int_0^t\int_\Omega\left|\nabla p^M\right|^2 \I x \I y \I s \right)^{1/2}\\
        &\times \left(\int_0^t\int_\Omega\left(\left(p^M-\delta\right)^+\right)^4 \I x \I y \I s \right)^{1/4}\\
        &+(\mu_-+\mu_+)\left(\mathrm{meas}~ Q_\delta\right)^{1/4}\esssup_{t\in(0, T_1)}\left\|\mathbf{v}(t)\right\|_{ H^2(\Omega)}\left(\int_0^t\int_\Omega\left(\left|\nabla n^M\right|^2\right) \I x \I y \I s \right)^{1/2}\\
        &\times \left(\int_0^t\int_\Omega\left(\left(n^M-\delta\right)^+\right)^4 \I x \I y \I s \right)^{1/4}\\
        \leq&\varepsilon\int_0^t\int_\Omega\left(\left|\nabla\left(p^M-\delta\right)^+\right|^{2}+\left|\nabla\left(n^M-\delta\right)^+\right|^{2}\right) \I x \I y \I s +\frac{L_7}{\varepsilon}(w(\delta))^{1/2}
       \end{split}
   \end{equation}}
   where $L_7$ is a positive constant depending on $\mu_\pm$, $c_0$,  $L_1$  and $\displaystyle\esssup_{t\in (0, T_1)}\left\|\mathbf{v}(t)\right\|_{H^2\left(\Omega\right)}$.

   Consequently, with appropriately chosen (fixed) $\varepsilon>0$, from estimates \eqref{Aux_cor_est_1}-\eqref{Aux_cor_est_4} it follows that 
   \begin{equation}\label{Aux_cor_est_5}\begin{split}
        &\esssup_{t\in(0, T_1)}\int_\Omega\left(\left|\left(p^M(t)-\delta\right)^+\right|^2+\left|\left(n^M(t)-\delta\right)^+\right|^2\right) \mathrm{d}x \, \mathrm{d}y \\
        &+\int_0^t\int_\Omega\left(\left|\nabla\left(p^M-\delta\right)^+\right|^2+\left|\nabla\left(n^M-\delta\right)^+\right|^2\right) \I x \I y \I s \\ \leq&L_8\left(w(\delta)\right)^{1/2},\quad \forall~\delta\geq\delta_0.
    \end{split}\end{equation}
    Here $L_8$ is a positive constant obtained from $a_{\min}$ and $L_j$, $j=1,~ 2,~ 6, ~7$.
    
    Inserting \eqref{Aux_cor_est_5} into \eqref{Aux_w} gives
    \[w(d)\leq \frac{c_0^2L_8^2}{(d-\delta)^4} w(\delta),\quad\forall~ d>\delta\geq\delta_0.\]
    Hence, by using an iteration lemma (see, for example, \cite{naumann1995existence}, \cite{stampacchia1965probleme},  \cite{stampacchia1966equations}(P.19, Lemma 4.1)), we obtain
    \[w(\delta)\leq e w(\delta_0)e^{-\left(ec_0^2L_8^2\right)^{-1/4}(\delta-\delta_0)},\quad\forall~ \delta>\delta_0.\]
    If $0<\delta\leq\delta_0$ a bound on $w(\delta)$ with the same formulation is readily obtained. Thus
    \[w(\delta)\leq a_1 e^{-a_2\delta}\quad\forall~ \delta>0.\]
    Here $a_1$ and $a_2$ are  positive constants which might depend on $L_1$ and $L_2$ but will be independent of $\delta$.

    Recalling the definitions \eqref{w_def} of $w(\delta)$, $P_\delta$ and $Q_\delta$, we have 
    \[\sup_{\delta>0}\left\{\delta \left(\mathrm{meas}~ P_\delta\right)^{1/\beta}\right\}\leq N(\beta),\quad \sup_{\delta>0}\left\{\delta \left(\mathrm{meas}~ Q_\delta\right)^{1/\beta}\right\}\leq N(\beta),\quad \forall~ 1\leq \beta<\infty,\]
    where \[N(\beta)=a_1^{1/q}\sup_{\delta>0}\left(\delta e^{-a_2\delta/\beta}\right)=\frac{\beta a_1^{1/q}}{e a_2}.\]
    This implies $p^M$ and $n^M$ belong to a weak $L^\beta$ space, 
    and from  elementary calculations, one then has
    \begin{equation}\label{Aux_opt_est}
        \left\|p^M\right\|_{L^\sigma(\Omega\times(0, T_1))}, \quad\left\|n^M\right\|_{L^\sigma(\Omega\times(0, T_1))}\leq 2\sigma^{1/\sigma}(T_1\mathrm{meas}~\Omega)^{1/\sigma(\sigma+1)}N(\sigma+1)
    \end{equation}
    for all $2\leq \sigma<\infty$ and $M>0$.
    This concludes the proof of estimate \eqref{Aux_opt_est1}.

    We finally derive the estimates  \eqref{Aux_opt_est2}. For the first, from \eqref{Aux_intform_p}, we obtain, for each $t\in (0, T_1)$ and any $f\in H_D^1(\Omega)$, 
    \begin{equation}\label{Aux-cor_est_der_1}
        \begin{split}
        \left|\int_\Omega\left(\frac{\partial p^M}{\partial t}(t)f\right) \mathrm{d}x \, \mathrm{d}y \right|&\leq \frac{\mu_+}{2}\int_\Omega\left| p^M(t) \right|\left|\Delta\phi^M(t)\right||f| \, \mathrm{d}x \, \mathrm{d}y +\epsilon_+\int_\Omega\left|\nabla p^M(t)\right|\left|\nabla f\right| \mathrm{d}x \, \mathrm{d}y \\
        &+\frac{\mu_+}{2}\int_\Omega\left(|f|\left|\nabla p^M(t)\right|+\left|p^M(t)\right||\nabla f|\right)\left|\nabla \phi^M(t)\right| \mathrm{d}x \, \mathrm{d}y \\
        &+\int_\Omega \left(\left|F\left(n^M(t), \left|\nabla\phi^M(t)\right|\right)\right|+\left|\nabla p^M\right|\left|\mathbf{v}(t)\right|\right)|f| \, \mathrm{d}x \, \mathrm{d}y .
    \end{split}
    \end{equation}
    Here we recall $F\left(n^M(t), \left|\nabla\phi^M(t)\right|\right)$ is given by \eqref{nonlinearity}.

    By \eqref{Sob_emb}-\eqref{Holder-Cauchy} from Proposition \ref{Sob_emb_reltn} and \eqref{Aux_aprioriest_pn_wrt_M}-\eqref{Aux_ellip_est_wrt_M} from Theorem \ref{thm_apriori_wrt_M}, we estimate the terms on the right-hand side of \eqref{Aux-cor_est_der_1} as follows: { \[\begin{split}
        &\int_\Omega\left| p^M(t) \right|\left|\Delta\phi^M(t)\right||f| \, \mathrm{d}x \, \mathrm{d}y +\int_\Omega\left|\nabla p^M(t)\right|\left|\nabla f\right| \mathrm{d}x \, \mathrm{d}y \\
        \leq&\frac{c_0^2}{\epsilon_0}\left(\left(\int_\Omega\left|\nabla p^M(t)\right|^2 \mathrm{d}x \, \mathrm{d}y \right)^{1/2}+\left(\int_\Omega\left|\nabla n^M(t)\right|^2 \mathrm{d}x \, \mathrm{d}y \right)^{1/2}\right)\\
        &\times\left(\int_\Omega\left| p^M (t)\right|^2 \mathrm{d}x \, \mathrm{d}y \right)^{1/2}\left(\int_\Omega|\nabla f|^2 \mathrm{d}x \, \mathrm{d}y \right)^{1/2}
        +\left(\int_\Omega\left|\nabla p^M(t) \right|^2 \mathrm{d}x \, \mathrm{d}y \right)^{1/2}\left(\int_\Omega|\nabla f|^2 \mathrm{d}x \, \mathrm{d}y \right)^{1/2}\\
        \leq&\left(\frac{c_0^2L_1}{\epsilon_0}+1\right)\left(\left(\int_\Omega\left|\nabla p^M(t)\right|^2 \mathrm{d}x \, \mathrm{d}y \right)^{1/2}+\left(\int_\Omega\left|\nabla n^M(t)\right|^2 \mathrm{d}x \, \mathrm{d}y \right)^{1/2}\right)\|f\|_{H^1(\Omega)},
    \end{split}\]
    \[\begin{split}
        &\int_\Omega\left(|f|\left|\nabla p^M(t)\right|+\left|p^M(t)\right||\nabla f|\right)\left|\nabla \phi^M\right| \mathrm{d}x \, \mathrm{d}y \\
        \leq&\esssup_{t\in(0, T_1)}\left(\int_\Omega\left|\nabla\phi^M(t)\right|^6 \mathrm{d}x \, \mathrm{d}y \right)^{1/6}\left(\int_\Omega\left|f\right|^{3} \mathrm{d}x \, \mathrm{d}y \right)^{1/3}\left(\int_\Omega\left|\nabla p^M(t)\right|^2 \mathrm{d}x \, \mathrm{d}y \right)^{1/2}\\
        &+\esssup_{t\in(0, T_1)}\left(\int_\Omega\left|\nabla\phi^M(t)\right|^6 \mathrm{d}x \, \mathrm{d}y \right)^{1/6}\left(\int_\Omega\left|p^M(t)\right|^{3} \mathrm{d}x \, \mathrm{d}y \right)^{1/3}\left(\int_\Omega\left|\nabla f\right|^2 \mathrm{d}x \, \mathrm{d}y \right)^{1/2}\\
        \leq&2c_0L_2\left(\int_\Omega\left|\nabla p^M(t)\right|^2 \mathrm{d}x \, \mathrm{d}y \right)^{1/2}\|f\|_{H^1(\Omega)}
    \end{split}\]
    and
    \[\begin{split}
        &\int_\Omega \left(\left|F\left(n^M(t), \left|\nabla\phi^M(t)\right|\right)\right||f|+\left|\nabla p^M(t)\right|\left|\mathbf{v}(t)\right||f|\right) \mathrm{d}x \, \mathrm{d}y \\
        \leq&\mu_-(\alpha_1+\eta_0)\esssup_{t\in(0, T_1)}\left(\int_\Omega\left|\nabla\phi^M(t)\right|^6 \mathrm{d}x \, \mathrm{d}y \right)^{1/6}\left(\int_\Omega\left|f\right|^{3} \mathrm{d}x \, \mathrm{d}y \right)^{1/3}
        \left(\int_\Omega\left|n^M(t)\right|^2 \mathrm{d}x \, \mathrm{d}y \right)^{1/2}\\
        &+\mu_-(\alpha_1+\eta_0)\esssup_{t\in(0, T_1)}\left(\int_\Omega\left|\mathbf{v}(t)\right|^6 \mathrm{d}x \, \mathrm{d}y \right)^{1/6}\left(\int_\Omega\left|f\right|^{3} \mathrm{d}x \, \mathrm{d}y \right)^{1/3}
        \left(\int_\Omega\left|\nabla p^M(t)\right|^2 \mathrm{d}x \, \mathrm{d}y \right)^{1/2}\\
        \leq&2c_0\mu_-(\alpha_1+\eta_0)\left(L_2+\esssup_{t\in(0, T_1)}\left\|\mathbf{v}(t)\right\|_{H^2(\Omega)}\right)\|f\|_{H^1(\Omega)}\\
        &\times\left(\left(\int_\Omega\left|\nabla n^M(t)\right|^2 \mathrm{d}x \, \mathrm{d}y \right)^{1/2}+\left(\int_\Omega\left|\nabla p^M(t)\right|^2 \mathrm{d}x \, \mathrm{d}y \right)^{1/2}\right)
    \end{split}\]}
    for all $t\in(0, T_1)$. The same argument can be applied to 
    $\partial n^M/\partial t$.
    Therefore, by estimates \eqref{Aux_aprioriest_pn_wrt_M}-\eqref{Aux_ellip_est_wrt_M} from Theorem \ref{thm_apriori_wrt_M}, we have the estimates \eqref{Aux_opt_est2}.

    This concludes the proof of Corollary \ref{Cor_Aux_der_est}.
\end{proof}
\begin{rem}
    From the derivation of \eqref{Aux_opt_est}, one can obtain, by the same reasoning, that 
    \begin{equation}\label{fm_est}
        \left\|f^M\right\|_{L^2(\Omega\times(0, T_1))}\leq C,\quad\text{where}\quad f^M=\sqrt{M}\left(G\left(M, p^M-n^M\right)-\left(p^M-n^M\right)\right).
    \end{equation}
    Here $C$ is a non-negative constant independent of $M$.
\end{rem}
%------------------------------------------------------------------------------

%%%%%%%%%%%%%%%%%%%%%%%%%%%%%%%%

\section{Proof of Theorem \ref{mainthm}}\label{mainthmproof}

%%%%%%%%%%%%%%%%%%%%%%%%%%%%%%%%

We carry out the proof in two parts.
The first part, Sub-section \ref{mainthmproof_exist}, 
shows the existence of a weak solution to the elliptic-parabolic system \eqref{EPSys}, 
while the second, Sub-section \ref{mainthmproof_unique}, 
shows the uniqueness of the weak solution. 

\subsection{Proof of Theorem \ref{mainthm}: Existence}\label{mainthmproof_exist}

Let us pass to the limit as $M\to\infty$. It follows from estimates \eqref{Aux_aprioriest_pn_wrt_M} and \eqref{Aux_ellip_est_wrt_M} of Theorem \ref{thm_apriori_wrt_M} and estimates \eqref{Aux_opt_est1} and \eqref{Aux_opt_est2} from Corollary \ref{Cor_Aux_der_est} that, as $M\to\infty$, the function $\phi^M$ is bounded in $L^\infty\left((0, T_1); H^2(\Omega)\right)$, the functions $p^M$, $n^M$ are bounded in the norm of $L^\infty\left((0, T_1); L^2(\Omega)\right)\cap L^2\left((0, T_1); H^1(\Omega)\right)$ and $\partial p^M/\partial t$, $\partial n^M/\partial t$ are bounded in $L^2\left((0, T_1); H^{-1}(\Omega)\right).$  

Thus, there exists a subsequence, still denoted by $\left(\phi^M, p^M, n^M\right)$, such that:
\begin{equation}\label{cvg_norm}
	\begin{split}
    \phi^M\to \phi \quad&\text{$\text{weakly}^*$  in }
		 L^\infty\left((0, T_1); H^2(\Omega)\right);\\
    p^M \to p,\quad n^M\to n \quad&\text{$\text{weakly}^*$  in }
		 L^\infty\left((0, T_1); L^2(\Omega)\right);\\
		 p^M \to p,\quad n^M\to n \quad&\text{weakly  in }
		 L^2\left((0, T_1); H^1(\Omega)\right);\\
         p^M \to p,\quad n^M\to n \quad&\text{weakly  in }
		 L^\sigma\left(\Omega\times(0, T_1)\right),\quad \forall~\sigma\in[2, \infty);
	\end{split}
\end{equation}
\begin{equation}\label{cvg_deriv}
	\frac{\partial p^M}{\partial t}\to\frac{\partial p}{\partial t},\quad \frac{\partial n^M}{\partial t}\to \frac{\partial n}{\partial t}\quad\text{weakly in}\quad L^2\left((0, T_1); H^{-1}(\Omega)\right);
\end{equation}
\begin{equation}\label{cvg_strong}
	p^M \to p,\quad n^M\to n \quad \text{strongly  in }
		 L^2\left(\Omega\times(0, T_1)\right) ;
\end{equation}
as $M\to\infty$ (c.f. Lemma \ref{comp2} (Aubin-Lions Lemma), \cite{lions1969quelques} (Lemma 1.3 from Sec.1.4 of Chap.1 and  Theorem 5.1 from Sec.5.2 of Chap.1), \cite{lions1983contrôle}, \cite{naumann1995existence}), moreover, \eqref{inival} and \eqref{bdyval} are readily deduced from \eqref{Aux_inival} and \eqref{Aux_bdyval} respectively.

We multiply both sides of (\ref{Aux_ellip_eq}) by $\phi^M(t)-\phi(t)$,  % to both sides of \eqref{Aux_ellip_eq}, 
then integration by part leads to 
\[\int_\Omega\left(\nabla\phi^M(t)\cdot\nabla\left(\phi^M(t)-\phi(t)\right)\right) \I x \I y \I s =\int_\Omega\left(p^M(t)-n^M(t)\right)\left(\phi^M(t)-\phi(t)\right) \I x \I y \I s ,\]
by using the above convergence properties \eqref{cvg_norm}-\eqref{cvg_strong}, and  we thus obtain 
\[\nabla\phi^M\to\nabla\phi \quad \text{strongly  in }
		 L^2\left(\Omega\times(0, T_1)\right) \quad\text{as $M\to\infty$}.\]
Analogously, \eqref{Aux_intform_p} implies 
\[\begin{split}
    &\epsilon_+\int_0^{T_1}\int_\Omega\left|\nabla\left(p^M-p\right)\right|^2 \I x \I y \I s \\
    \leq&-\int_0^{T_1}\int_\Omega\left(\frac{\partial p^M}{\partial s}\left(p^M-p\right)\right) \I x \I y \I s -\epsilon_+\int_0^{T_1}\int_\Omega\nabla p\cdot\nabla\left(p^M-p\right) \I x \I y \I s \\
    &+\int_{0}^{T}\int_\Omega\left(\frac{\mu_+}{2}p^M\Delta\phi^M+\mu_-\left(\alpha_1+\eta_0\right)|n^M||\nabla\phi^M|\right)|p^M-p| \I x \I y \I t \\
&+\frac{\mu_+}{2}\int_{0}^{T}\int_\Omega\left\{\left(|p^M-p|\nabla p^M-p^M\nabla (p^M-p)\right)\cdot\nabla\phi^M\right\} \I x \I y \I t \\
\to&~ 0,\quad\text{as}~M\to\infty.
\end{split}\]
According to \eqref{Aux_intform_n}, we obtain, by the same reasoning: 
\[\nabla n^M\to\nabla n\quad\text{strongly in}\quad L^2(\Omega\times(0, T_1)).\]
 Without loss of generality, we can assume that the sequences  $\left\{\phi^M\right\}$, 
 $\left\{\nabla\phi^M\right\}$, $\left\{p^M\right\}$, $\left\{n^M\right\}$, 
 $\left\{\nabla p^M\right\}$ $\left\{\nabla n^M\right\}$ converge a.e. in $\Omega\times(0, T_1)$ 
 and $\partial\Omega\times(0, T_1)$. Then  passage to the limit $M\to\infty$ 
 for identities \eqref{Aux_intform_p} and \eqref{Aux_intform_n} gives the weak formulations \eqref{intform_p} and \eqref{intform_n} of parabolic equations \eqref{parab_eq_pos} and \eqref{parab_eq_neg} respectively. We rewrite \eqref{Aux_ellip_eq} as %, we rewrite \eqref{Aux_ellip_eq} by  
 \begin{equation}\label{Aux_ellip_eq_rf}\Delta\phi^M-\left(p^M-n^M\right)/\epsilon_0=\left(1/(\epsilon_0\sqrt{M})f^M\right) ; \end{equation}
 thanks to the boundedness of $\|f^M\|_{L^2(0,T_1; L^2(\Omega))}$, \eqref{fm_est}, passage to limit $M\to\infty$ in \eqref{Aux_ellip_eq_rf} leads to equation \eqref{ellip_eq}. 

 This concludes the proof of the existence of a weak solution to the elliptic-parabolic system \eqref{EPSys}.

\subsection{Proof of Theorem \ref{mainthm}: Uniqueness}\label{mainthmproof_unique}

In this sub-section, we shall show that the weak solution to the elliptic-parabolic system \eqref{EPSys} is unique. 

We suppose there are two solutions $(\phi_1, p_1, n_1)$ and $(\phi_2, p_2, n_2)$ 
satisfying integral equations \eqref{intform_p} \eqref{intform_n} as well as  
elliptic equation \eqref{ellip_eq}, initial data \eqref{inival} and boundary conditions 
\eqref{bdyval}. Then the differences $\phi=\phi_1-\phi_2$, $p=p_1-p_2$, $n=n_1-n_2$ 
must satisfy identities:
\begin{subequations}\label{unique_int_form}
\begin{equation}
    -\Delta\phi=(p-n)/\epsilon_0,
\end{equation}\begin{equation}\label{unique_int_form_p}
\begin{split}
    \int_0^{T_1}\int_\Omega\left(\frac{\partial p}{\partial t}f-\frac{\mu_+}{2}\left(p\Delta\phi_1+p_2\Delta\phi\right)f-\left[F_1(p_1, n_1, |\nabla\phi_1|)-F_1(p_2, n_2, |\nabla\phi_2|)\right]f\right) \I x \I y \I s \\
    +\int_0^{T_1}\int_\Omega\left\{\epsilon_+\nabla p\cdot\nabla f-\frac{\mu_+}{2}\left[f\left(\nabla p\cdot \nabla\phi_1+\nabla p_2\cdot \nabla\phi\right)-\nabla f\cdot\left(p\nabla\phi_1+p_2\nabla\phi\right)\right]\right\} \I x \I y \I s =0,
\end{split}\end{equation}
\begin{equation}\label{unique_int_form_n}\begin{split}
    \int_0^{T_1}\int_\Omega\left(\frac{\partial n}{\partial t}g+\frac{\mu_-}{2}\left(n\Delta\phi_1+n_2\Delta\phi\right)g-\left[F_2(n_1, |\nabla\phi_1|)-F_2(n_2, |\nabla\phi_2|)\right]g\right) \I x \I y \I s \\
    +\int_0^{T_1}\int_\Omega\left(\epsilon_-\nabla n\cdot\nabla g+\frac{\mu_-}{2}\left[g\left(\nabla n\cdot \nabla\phi_1+\nabla n_2\cdot \nabla\phi\right)-\nabla g\cdot\left(n\nabla\phi_1+n_2\nabla\phi\right)\right]\right) \I x \I y \I s =0.
\end{split}\end{equation}
\end{subequations}
with zero initial and boundary data:
\begin{equation}\label{Unique_IniBdyVal}
\begin{split}
    p(x,y,0)=n(x,y,0)=0,\quad &(x,y)\in\Omega,\\
    p(x,y,t)=n(x,y,t)=0,\quad &(x,y,t)\in\partial\Omega_D\times(0, T_1),\\
    \nu\cdot\nabla\phi=\nu\cdot\nabla p=\nu\cdot\nabla n=0,\quad &(x,y,t)\in\partial\Omega_N\times(0, T_1).
\end{split}
\end{equation}
Here
\[\begin{split}F_1(p, n, |\nabla\phi|)=\mu_-n|\nabla\phi|\left(\alpha_1e^{-\alpha_2/|\nabla\phi|}-\eta_0\right)-\nabla p\cdot\mathbf{v},\\
F_2(n, |\nabla\phi|)=\mu_-n|\nabla\phi|\left(\alpha_1e^{-\alpha_2/|\nabla\phi|}-\eta_0\right)-\nabla n\cdot\mathbf{v}.\end{split}\]
By taking, in equations \eqref{unique_int_form}, 
	\[\begin{split}f(x,y,s)=\begin{cases}
		\mu_-p(x,y,s),\quad~&\forall~s\in(0, t],\\ 0,\quad &\forall~s\in(t, T_1),
	\end{cases}\quad g(x,y,s)=\begin{cases}
	\mu_+ n(x,y,s),\quad~&\forall~s\in(0, t],\\ 0,\quad &\forall~s\in(t, T_1),
	\end{cases}\end{split}\]
	 adding the two resulting identities, and  using the zero initial boundary data \eqref{Unique_IniBdyVal},  integration by parts leads to
    \begin{equation}\label{Unique_Int_identity}
        \begin{split}
            &\frac{1}{2}\int_\Omega\left(\mu_-|p(t)|^2+\mu_+|n(t)|^2\right) \mathrm{d}x \, \mathrm{d}y +\int_0^{t}\int_\Omega\left(\mu_-\epsilon_+|\nabla p|^2+\mu_+\epsilon_-|\nabla n|^2\right) \I x \I y \I s \\
        =&\int_0^{t}\int_\Omega\left(\mu_+\mu_-\left(\frac{1}{2}(|p|^2-|n|^2)\Delta\phi_1+(n_2\nabla n-p_2\nabla p)\cdot\nabla\phi\right)\right) \I x \I y \I s \\
        &+\int_0^{t}\int_\Omega\left(\mu_-p(F_1(p_1, n_1,|\nabla\phi_1|)-F_1(p_2, n_2,|\nabla\phi_2|))\right) \I x \I y \I s \\
        &+\int_0^{t}\int_\Omega\left(\mu_+n(F_2( n_1,|\nabla\phi_1|)-F_2( n_2,|\nabla\phi_2|))\right) \I x \I y \I s .
        \end{split}
    \end{equation}
    Analogous to \eqref{Int_est} and \eqref{Aux_Int_LHS}, we have 
    \begin{equation}\label{Unique_Int_LHS}\begin{split}        &a_{\min}\left(\int_\Omega\left(|p(t)|^2+|n(t)|^2\right) \mathrm{d}x \, \mathrm{d}y +\int_0^{t}\int_\Omega\left(|\nabla p|^2+|\nabla n|^2\right) \I x \I y \I s \right)\\ 
        \leq&  \frac{1}{2}\int_\Omega\left(\mu_-|p(t)|^2+\mu_+|n(t)|^2\right) \mathrm{d}x \, \mathrm{d}y +\int_0^{t}\int_\Omega\left(\mu_-\epsilon_+|\nabla p|^2+\mu_+\epsilon_-|\nabla n|^2\right) \I x \I y \I s .\end{split}
    \end{equation}
    Using the Sobolev embedding inequality \eqref{Sob_emb}, \eqref{Holder-Cauchy} from Proposition \ref{Sob_emb_reltn}, elliptic estimate \eqref{elliptic_reg} from Proposition \ref{EllipticRegEst}, H\"older's inequality and Cauchy's inequality, we estimate the terms on the right-hand side of \eqref{Unique_Int_identity} as:\\ 
{    \begin{equation}\label{Unique_RHS_1}
        \begin{split}
            &\frac{\mu_+\mu_-}{2}\int_0^{t}\int_\Omega\left((|p|^2-|n|^2)\Delta\phi_1\right) \I x \I y \I s 
            =\frac{\mu_+\mu_-}{2\epsilon_0}\int_0^{t}\int_\Omega\left((|p|^2-|n|^2)(n_1-p_1)\right) \I x \I y \I s \\
            \leq&\frac{\mu_+\mu_-}{2\epsilon_0}\int_0^t\left(\|p(s)\|_{L^2(\Omega)}\|p(s)\|_{L^6(\Omega)}\right)\left(\|p_1(s)\|_{L^3(\Omega)}+\|n_1(s)\|_{L^3(\Omega)}\right)\mathrm{d}s\\
            &+\frac{\mu_+\mu_-}{2\epsilon_0}\int_0^t\left(\|n(s)\|_{L^2(\Omega)}\|n(s)\|_{L^6(\Omega)}\right)\left(\|p_1(s)\|_{L^3(\Omega)}+\|n_1(s)\|_{L^3(\Omega)}\right)\mathrm{d}s\\
             \leq&\frac{\mu_+^2\mu_-^2c_0^2}{8\epsilon_0^2\varepsilon/c_0^2}\int_0^t\left(\|\nabla p_1(s)\|^2_{L^2(\Omega)}+\|\nabla n_1(s)\|^2_{L^2(\Omega)}\right)\int_\Omega\left(| p|^2+|n|^2\right) \I x \I y \I s \\
            &+\varepsilon\int_0^t\int_\Omega\left(|\nabla p|^2+|\nabla n|^2\right) \I x \I y \I s ,
        \end{split}
    \end{equation} and
    \begin{equation}\label{Unique_RHS_2}
        \begin{split}
            &\frac{\mu_+\mu_-}{2}\int_0^{t}\int_\Omega\left((n_2\nabla n-p_2\nabla p)\cdot\nabla\phi\right) \I x \I y \I s \\
            \leq&\frac{\mu_+\mu_-}{2}\int_0^{t}\left(\|\nabla n(s)\|_{L^2(\Omega)}\|n_2(s)\|_{L^3(\Omega)}+\|\nabla p(s)\|_{L^2(\Omega)}\|p_2(s)\|_{L^3(\Omega)}\right)\|\nabla\phi(s)\|_{L^6(\Omega)}\mathrm{d}s\\
            \leq&\varepsilon\int_0^t\left(\|\nabla p(s)\|^2_{L^2(\Omega)}+\|\nabla n(s)\|^2_{L^2(\Omega)}\right)\mathrm{d}s\\
            &+\frac{\mu_+^2\mu_-^2c_0^2}{16\varepsilon}\int_0^t\left(\|\nabla p_2(s)\|^2_{L^2(\Omega)}+\|\nabla n_2(s)\|^2_{L^2(\Omega)}\right)\|\nabla\phi(s)\|^2_{L^6(\Omega)}\mathrm{d}s\\
            \leq&\frac{\mu_+^2\mu_-^2c_0^2}{8\varepsilon/c_0^2}\int_0^t\left(\|\nabla p_2(s)\|^2_{L^2(\Omega)}+\|\nabla n_2(s)\|^2_{L^2(\Omega)}\right)\left(\|p(s)\|^2_{L^2(\Omega)}+\| n(s)\|^2_{L^2(\Omega)}\right)\mathrm{d}s\\
            &+\varepsilon\int_0^t\left(\|\nabla p(s)\|^2_{L^2(\Omega)}+\|\nabla n(s)\|^2_{L^2(\Omega)}\right)\mathrm{d}s.
        \end{split}
    \end{equation}}
    Further we write
    \begin{equation}\label{Unique_RHS_3}
        \begin{split}
           &\int_0^{t}\int_\Omega\left(\mu_-p[F_1(p_1, n_1,|\nabla\phi_1|)-F_1(p_2, n_2,|\nabla\phi_2|)]\right) \I x \I y \I s \\
        &+\int_0^{t}\int_\Omega\left(\mu_+n[F_2( n_1,|\nabla\phi_1|)-F_2( n_2,|\nabla\phi_2|)]\right) \I x \I y \I s \\
=&\int_0^{t}\int_\Omega\left(\mu_-\left(\alpha_1e^{-\alpha_2/|\nabla\phi_1|}-\eta_0\right)|\nabla\phi_1|n\left(\mu_-p+\mu_+n\right)\right) \I x \I y \I s \\
&+\int_0^{t}\int_\Omega\left(\mu_-\left(\alpha_1e^{-\alpha_2/|\nabla\phi_1|}-\eta_0\right)n_2(|\nabla\phi_1|-|\nabla\phi_2|)\left(\mu_-p+\mu_+n\right)\right) \I x \I y \I s \\
&+\int_0^{t}\int_\Omega\left(\mu_-\alpha_1\left(e^{-\alpha_2/|\nabla\phi_1|}-e^{-\alpha_2/|\nabla\phi_2|}\right)|\nabla\phi_2|n_2\left(\mu_-p+\mu_+n\right)\right) \I x \I y \I s \\
&+\int_0^{t}\int_\Omega\left(\mu_-p\nabla p+\mu_+n\nabla n\right)\cdot\mathbf{v} \I x \I y \I s \\
=:&U_1+U_2+U_3+U_4. 
        \end{split}
    \end{equation}
    By using \eqref{Sob_emb}-\eqref{Holder-Cauchy} from Proposition \ref{Sob_emb_reltn}, elliptic estimate \eqref{elliptic_reg} from Proposition \ref{EllipticRegEst} and \textit{a priori} estimates \eqref{Aux_aprioriest_pn_wrt_M}-\eqref{Aux_ellip_est_wrt_M} from Theorem \ref{thm_apriori_wrt_M},
    we estimate $U_i$, $i=1,~ 2$, as follows: 
    \begin{equation}\label{U1}
       \begin{split}
           U_1\leq&\mu_-(\alpha_1+\eta_0)\int_0^t\int_\Omega|\nabla\phi_1|\left|n\left(\mu_-p+\mu_+n\right)\right| \I x \I y \I s \\
           \leq&\mu_-(\mu_-+\mu_+)(\alpha_1+\eta_0)\int_0^t\|n(s)\|_{L^3(\Omega)}\left(\|p(s)\|_{L^2(\Omega)}+\|n(s)\|_{L^2(\Omega)}\right)\|\nabla\phi_1(s)\|_{L^6(\Omega)} \I x \I y \I s \\
           \leq&\frac{c_0^2\mu_-^2(\mu_-+\mu_+)^2(\alpha_1+\eta_0)^2L^2_2}{2\varepsilon/c_0^2}\int_0^t\int_\Omega\left(|p|^2+|n|^2\right) \I x \I y \I s 
           +\varepsilon\int_0^t\int_\Omega\left|\nabla n\right|^2 \I x \I y \I s ,
       \end{split} 
    \end{equation} 
    \begin{equation}\label{U2}
        \begin{split}
            U_2\leq& \mu_-(\alpha_1+\eta_0)(\mu_-+\mu_+)\int_0^t\int_\Omega|\nabla\phi|\left|n_2\left(\mu_-p+\mu_+n\right)\right| \I x \I y \I s \\
            \leq&\mu_-(\alpha_1+\eta_0)(\mu_-+\mu_+)\int_0^t\left(\|\nabla\phi(s)\|_{L^6(\Omega)}\|n_2(s)\|_{L^3(\Omega)}\|p(s)+n(s)\|_{L^2(\Omega)}\right)\mathrm{d}s\\
            \leq&\frac{\mu_-(\alpha_1+\eta_0)(\mu_-+\mu_+)c_0}{\epsilon_0}\int_0^t\left(\|p(s)\|_{L^2(\Omega)}+\|n(s)\|_{L^2(\Omega)}\right)^2\|n_2(s)\|_{L^2(\Omega)}\mathrm{d}s\\
            \leq&\frac{2\mu_-(\alpha_1+\eta_0)(\mu_-+\mu_+)c_0L_1}{\epsilon_0}\int_0^t\int_\Omega\left(|p|^2+|n|^2\right) \I x \I y \I s .
        \end{split}
    \end{equation}
    As the function $w(\xi)=e^{-\alpha_2/\xi}/\xi^2$, which is the derivative of $e^{-\alpha_2/\xi}$ with respect to $\xi$, has a global uniform upper bound $c_*=4e^{-2}/\alpha_2$ for $\xi>0$,   by using \eqref{Sob_emb}-\eqref{Holder-Cauchy} from Proposition \ref{Sob_emb_reltn}, elliptic estimate \eqref{elliptic_reg} from Proposition \ref{EllipticRegEst} and \textit{a priori} estimate \eqref{Aux_ellip_est_wrt_M} from Theorem \ref{thm_apriori_wrt_M}, we further obtain\\
    \begin{equation}\label{U3}
        \begin{split}
            U_3\leq&\mu_-\alpha_1(\mu_-+\mu_+)\int_0^t\int w(\xi)|\nabla\phi||\nabla\phi_2||n_2||p+n| \I x \I y \I s \\
            \leq&\mu_-\alpha_1(\mu_-+\mu_+)c_*\int_0^t\left(\|\nabla\phi_2(s)\|_{L^6(\Omega)}\|\nabla\phi(s)\|_{L^6(\Omega)}\|n_2\|_{L^6(\Omega)}\|p+n\|_{L^2(\Omega)}\right)\mathrm{d}s\\
            \leq&\mu_-\alpha_1(\mu_-+\mu_+)c_*c_0^3\int_0^t\left(\|\phi_2(s)\|_{H^2(\Omega)}\|\phi(s)\|_{H^2(\Omega)}\|\nabla n_2\|_{L^2(\Omega)}\|p+n\|_{L^2(\Omega)}\right)\mathrm{d}s\\
            \leq&\frac{2\mu_-\alpha_1(\mu_-+\mu_+)c_*c_0^4L_2}{\epsilon_0}\int_0^t\|\nabla n_2(s)\|_{L^2(\Omega)}\int_\Omega\left(|p|^2+|n|^2\right) \I x \I y \I s ,
        \end{split}
    \end{equation}
    where $\xi=\gamma|\nabla\phi_1|+(1-\gamma)|\nabla\phi_2|$,  $\gamma\in(0, 1)$.

    According to \eqref{Holder-Cauchy}, \eqref{Sob_emb_H2} from Proposition \ref{Sob_emb_reltn}, we estimate $U_4$ by 
    \begin{equation}\label{U4}
        \begin{split}
            U_4\leq&(\mu_-+\mu_+)\int_0^t\int_\Omega|(p\nabla p+n\nabla n)\mathbf{v}| \I x \I y \I s \\
            \leq&(\mu_-+\mu_+)\int_{0}^{t}\left(\|p(s)\|_{L^2(\Omega)}\|\nabla p(s)\|_{L^2(\Omega)}+\|n(s)\|_{L^2(\Omega)}\|\nabla n(s)\|_{L^2(\Omega)}\right)\|\mathbf{v}(s)\|_{L^\infty(\Omega)}\mathrm{d}s\\
            \leq&\frac{(\mu_-+\mu_+)^2}{4\varepsilon}\esssup_{t\in(0,T_1)}\|\mathbf{v}(t)\|^2_{H^2(\Omega)}\int_0^t\int_\Omega\left(| p|^2+| n|^2\right) \I x \I y \I s \\
            +&\varepsilon\int_0^t\int_\Omega\left(|\nabla p|^2+|\nabla n|^2\right) \I x \I y \I s .
        \end{split}
    \end{equation}
    Hence, using estimates \eqref{U1}-\eqref{U4}, \eqref{Unique_RHS_3} becomes 
    \begin{equation}\label{Unique_RHS_3-1}
        \begin{split}
             &\int_0^{t}\int_\Omega\left(\mu_-p[F_1(p_1, n_1,|\nabla\phi_1|)-F_1(p_2, n_2,|\nabla\phi_2|)]\right) \I x \I y \I s \\
        &+\int_0^{t}\int_\Omega\left(\mu_+n[F_2( n_1,|\nabla\phi_1|)-F_2( n_2,|\nabla\phi_2|)]\right) \I x \I y \I s \\
        \leq&2\varepsilon\int_0^t\int_\Omega\left(|\nabla p|^2+|\nabla n|^2\right) \I x \I y \I s             +Q_1\int_0^tQ_2\int_\Omega\left(| p|^2+| n|^2\right) \I x \I y \I s .
        \end{split}
    \end{equation}
    Here 
    \[\begin{split}Q_1&=\frac{c_0^2\mu_-^2(\mu_-+\mu_+)^2(\alpha_1+\eta_0)^2L_2^2}{2\varepsilon/c_0^2}+\frac{2\mu_-(\alpha_1+\eta_0)(\mu_-+\mu_+)c_0L_1}{\epsilon_0}+\frac{2\mu_-\alpha_1(\mu_-+\mu_+)c_*c_0^4L_2}{\epsilon_0}\\
    	&+\frac{(\mu_-+\mu_+)^2}{4\varepsilon}\esssup_{t\in(0,T_1)}\|\mathbf{v}(t)\|^2_{H^2(\Omega)},\\ 
    	\mbox{and } \quad Q_2&=\|\nabla n_2\|_{L^2(\Omega)}.\end{split}\]
    Therefore,  with regard to the identity \eqref{Unique_Int_identity} 
    and estimates \eqref{Unique_Int_LHS}-\eqref{Unique_RHS_3}, it follows that
    \begin{equation}\label{Unique_GronwallIneqy}
        \begin{split}  &\bar{b}_{\min}\left(\int_\Omega\left(|p(t)|^2+|n(t)|^2\right) \mathrm{d}x \, \mathrm{d}y +\int_0^{t}\int_\Omega\left(|\nabla p|^2+|\nabla n|^2\right) \I x \I y \I s \right)\\ 
        \leq& Q_3\int_0^tQ_4\int_\Omega\left(| p|^2+| n|^2\right) \I x \I y \I s ,
        \end{split}
    \end{equation}
    for sufficiently small $\varepsilon>0$ such that 
    $\bar{b}=a_{\min}-4\varepsilon>0$, and where 
    \[\begin{split}
    	Q_3=\frac{\mu_+^2\mu_-^2c_0^2}{8\epsilon_0^2\varepsilon/c_0^2}+\frac{\mu_+^2\mu_-^2c_0^2}{8\varepsilon/c_0^2}+Q_1,\quad 
    	Q_4=Q_2+\sum_{j=1}^{2}\left(\|\nabla p_j(s)\|^2_{L^2(\Omega)}+\|\nabla n_j(s)\|^2_{L^2(\Omega)}\right), 
    \end{split}\]
Since $p_j, n_j \in L^2(0,T_1;H^1(\Omega))$, \[\int_{0}^{T_1}Q_4\mathrm{d}s%\leq C_u
\]
is bounded by a constant independent of the data. Consequently, 
it follows from estimate \eqref{Unique_GronwallIneqy} and Gr\"{o}nwall's inequality 
that 
\[\|p(t)\|_{L^2(\Omega)}=\|n(t)\|_{L^2(\Omega)}=0,\] 
which implies that $p\equiv n\equiv0$ almost everywhere in $\Omega\times(0, T_1)$. 
This concludes the uniqueness of the weak solution. 

    The proof of Theorem \ref{mainthm} is complete. \qed

%%%%%%%%%%%%%%%%%%%%%%%%%%%%%%%%

\

\

   {\bf {\LARGE \qquad Appendix}}
   
    \appendix

%%%%%%%%%%%%%%%%%%%%%%%%%%%%%%%%

\renewcommand{\theequation}{A.\arabic{equation}}

%%%%%%%%%%%%%%%%%%%%%%%%%%%%%%%%

    \section{Proof of Theorem \ref{Exist_Aux_Pro}}\label{appA}

%%%%%%%%%%%%%%%%%%%%%%%%%%%%%%%%

      We divide the proof of Theorem \ref{Exist_Aux_Pro} into three steps:
\medskip
~\\
\textbf{Step 1. Construction of an Approximate Solution.}
\medskip
~\\
The proof follows by a standard Galerkin approximation. 
From the compactness of the embedding $H_{D}^1(\Omega)\hookrightarrow L^2(\Omega)$ 
we obtain a basis of $H_{D}^1(\Omega)$ consisting of non-zero eigenfunctions $w_j$ of $-\Delta + 1$, 
associated  with  eigenvalues $\lambda_j>0$:
\[\int_\Omega\left(w_jv+\nabla w_j\cdot\nabla v\right) \, \mathrm{d}x \, \mathrm{d}y =\lambda_j\int_\Omega(w_jv) \mathrm{d}x \, \mathrm{d}y ,\quad\forall~ v\in H_{D}^1(\Omega).\]
$w_j$ automatically satisfies the Neumann condition $\nu\cdot\nabla u=0$ on $\partial\Omega_N$;

Approximate solutions $\left(p_j^M(t), n_j^M(t)\right)$ are constructed with respect to this basis::
\begin{subequations}\label{Glk_pn_series}
\begin{equation}\label{Glk_p_series}
    p_j^M(t)-p_D \in\mathrm{span}\left\{w_1,\dots, w_j\right\},\quad p_j^M(t)-p_D=\sum_{i=1}^ja_{ij}(t)w_i,
\end{equation}
\begin{equation}\label{Glk_n_series}
     n_j^M(t)-n_D\in\mathrm{span}\left\{w_1,\dots, w_j\right\}, \quad n_j^M(t)-n_D=\sum_{i=1}^jb_{ij}(t)w_i,\end{equation}\end{subequations}
\begin{subequations}\label{Glk_Aprxm_eq}
    \begin{equation}\label{Glk_Aprxm_eq_p}\begin{split}
        &\int_{0}^{T}\int_\Omega\left\{\left(\frac{\partial p_j^M}{\partial t}-\frac{\mu_+}{2}p_j^M\Delta\phi_j^M-F_1\left(p_j^M, n_j^M,|\nabla\phi_j^M|\right)\right)\mu_-w_i\right\} \I x \I y \I t \\
				&+\int_{0}^{T}\int_\Omega\left\{\epsilon_+\mu_-\nabla p_j^M\cdot\nabla w_i
                -\frac{\mu_+\mu_-}{2}\left(w_i\nabla p_j^M-p_j^M\nabla w_i\right)\cdot\nabla\phi_j^M\right\} \I x \I y \I t =0, \quad i\in[1, j],\end{split}\end{equation}
\begin{equation}\label{Glk_Aprxm_eq_n}\begin{split}                &\int_{0}^{T}\int_\Omega\left\{\left(\frac{\partial n_j^M}{\partial t}+\frac{\mu_-}{2}n_j^M\Delta\phi_j^M-F_2\left(p_j^M, n_j^M,|\nabla\phi_j^M|\right)\right)\mu_+w_i\right\} \I x \I y \I t \\
				&+\int_{0}^{T}\int_\Omega\left\{\epsilon_-\mu_+\nabla n_j^M\cdot\nabla w_i
                +\frac{\mu_+\mu_-}{2}\left(w_i\nabla n_j^M-n_j^M\nabla w_i\right)\cdot\nabla\phi_j^M\right\} \I x \I y \I t =0,\quad i\in[1, j],
    \end{split}
\end{equation}\end{subequations}
\begin{equation}\label{Glk_inival}\begin{split}
    \left(p^M_j(0)-p_D, n_j^M(0)-n_D\right)&=(p_{0j}-p_D, n_{0j}-n_D),\\ (p_{0j}-p_D, n_{0j}-n_D)&\in (\mathrm{span}\left\{w_1,\dots, w_j\right\})^2,\\
    \lim_{j\to\infty} (p_{0j}, n_{0j})&=(p_0, n_0)\quad\text{in}\quad (L^2(\Omega))^2.\end{split}
\end{equation}
Here, $\phi_j^M$ is obtained from the elliptic equation 
\begin{equation}\label{Glk_ellip_eq}
    -\Delta \phi_j^M = G\left(M, p_j^M-n_j^M\right),\quad \phi_j^M|_{\partial\Omega_D}=\phi_D|_{\partial\Omega_D},\quad \nu\cdot\nabla\phi_j^M|_{\partial\Omega_N}=0.
\end{equation}
Solving the system of differential equations for $\left(a_{ij}(t), b_{ij}(t)\right)$, we get the solution $\left(p_j^M(t), n_j^M(t)\right)$ on the time interval $[0, t_j]$. From the \textit{a priori} estimate in Step 2 below, we can take $t_j=T$. 
\medskip~\\
\textbf{Step 2. \textit{A Priori} Estimates for the Approximate Solution.}
\medskip
~\\
We first show 
\begin{equation}\label{Glk_apriori_est_pn}p_j^M,~ n_j^M\quad\text{are bounded in}\quad L^\infty\left((0, T); L^2(\Omega)\right)\cap L^2\left((0, T); H^1(\Omega)\right).\end{equation}
We multiply the equations \eqref{Glk_Aprxm_eq_p} and \eqref{Glk_Aprxm_eq_n} by $a_{ij}(t)$ and $b_{ij}(t)$ respectively, and sum with respect to $i$. Adding the resulting equations gives
\begin{equation}\label{Glak_Int_idy}\begin{split}
		&\int_\Omega\left(\frac{\mu_-}{2}\left|p_j^M(t)\right|^2+\frac{\mu_+}{2}\left|n_j^M(t)\right|^2\right) \mathrm{d}x \, \mathrm{d}y +\int_{0}^{t}\int_\Omega\left(\epsilon_+\mu_-\left|\nabla p_j^M\right|^2+\epsilon_-\mu_+\left|\nabla n_j^M\right|^2\right) \I x \I y \I s \\
		&=\int_\Omega\left(\frac{\mu_-}{2}\left|p_{0,j}\right|^2+\frac{\mu_+}{2}\left|n_{0,j}\right|^2+\mu_-p_Dp_j^M(t)-\mu_-p_Dp_{0,j}+\mu_+n_Dn_j^M(t)-\mu_+n_Dn_{0,j}\right) \mathrm{d}x \, \mathrm{d}y \\
		&-\frac{\mu_+\mu_-}{2\epsilon_0}\int_{0}^{t}\int_\Omega\left(\left|p_j^M\right|^2-\left|n_j^M\right|^2-\left(p_j^Mp_D-n_j^Mn_D\right)\right)G\left(M, p_j^M-n_j^M\right) \I x \I y \I s \\
		&+\int_{0}^{t}\int_\Omega\left(\epsilon_+\mu_-\nabla p_D\cdot\nabla p_j^M+\epsilon_-\mu_+\nabla n_D\cdot\nabla n_j^M\right) \I x \I y \I s \\
		&+\frac{\mu_-\mu_+}{2}\int_{0}^{t}\int_\Omega\left(p_D\nabla p_j^M-p_j^M\nabla p_D-\left(n_D\nabla n_j^M-n_j^M\nabla n_D\right)\right)\cdot\nabla\phi^M \I x \I y \I s \\
		&+\int_{0}^t\int_\Omega\left(\mu_-F_1\left(p_j^M,n_j^M,\left|\nabla\phi_j^M\right|\right)\left(p_j^M-p_D\right)+\mu_+F_2\left(p_j^M,n_j^M,\left|\nabla\phi_j^M\right|\right)\left(n_j^M-n_D\right)\right) \I x \I y \I s .
	\end{split}\end{equation}
    {Repeating the derivation  of \eqref{Int_est_4} in the proof of Lemma \ref{Aux_pro_a_priori_est}, we see  that
    \begin{equation}\label{Glk_Int_est}\begin{split}
&b_{\min}\left[\int_\Omega\left(\left|p_j^M(t)\right|^2+\left|n_j^M(t)\right|^2\right) \mathrm{d}x \, \mathrm{d}y +\int_{0}^{t}\int_\Omega\left(\left|\nabla p_j^M\right|^2+\left|\nabla n_j^M\right|^2\right) \I x \I y \I s \right]\\
			\leq&B_1+B_2^Mt
			+B_3^M\int_{0}^{t}\int_\Omega \left(\left|p_j^M\right|^2+\left|n_j^M\right|^2\right) \I x \I y \I s ,
	\end{split}\end{equation}
    with the same parameters $B_1$, $B_2^M,~ B_3^M$ as those for \eqref{Int_est_4}, with $p_0$ and $n_0$ in \eqref{Int_est_4} replaced by $p_{0,j}$ and $n_{0,j}$ respectively.}

    Using \eqref{Glk_Int_est} together with Gr\"{o}nwall's inequality, we obtain $t_j=T$ 
    and obtain \eqref{Glk_apriori_est_pn}.

    We further prove that 
    \begin{equation}\label{Glk_apriori_est_pn_der}\frac{\partial p_j^M}{\partial t},~ \frac{\partial n_j^M}{\partial t}\quad\text{are bounded in}\quad  L^2\left((0, T); H^{-1}(\Omega)\right).\end{equation}
    Indeed, we have 
      \begin{equation}\label{Glk_est_der_pn}
        \begin{split}
        \left|\int_\Omega\left(\frac{\partial p_j^M}{\partial t}(t)w_j\right) 
        \mathrm{d}x \, \mathrm{d}y \right|&\leq \frac{\mu_+}{2}\int_\Omega\left| p_j^M(t) \right|\left|\Delta\phi_j^M(t)\right||w_j| \mathrm{d}x \, \mathrm{d}y +\epsilon_+\int_\Omega\left|\nabla p_j^M(t)\right|\left|\nabla f\right| \mathrm{d}x \, \mathrm{d}y \\
        &+\frac{\mu_+}{2}\int_\Omega\left(|w_j|\left|\nabla p_j^M(t)\right|+\left|p_j^M(t)\right||\nabla w_j|\right)\left|\nabla \phi^M(t)\right| \mathrm{d}x \, \mathrm{d}y \\
        &+\int_\Omega \left(\left|F\left(n_j^M(t), \left|\nabla\phi_j^M(t)\right|\right)\right|+\left|\nabla p_j^M\right|\left|\mathbf{v}(t)\right|\right)|w_j| \mathrm{d}x \, \mathrm{d}y .
    \end{split}
    \end{equation}
    {By \eqref{Sob_emb}-\eqref{Holder-Cauchy} from Proposition \ref{Sob_emb_reltn}, 
    equation \eqref{Glk_ellip_eq}, the definition \eqref{G_def} of $G(M, z)$, 
    and \eqref{aprioriest} from Lemma \ref{Aux_pro_a_priori_est}, 
    we estimate the terms on the right-hand side of \eqref{Glk_est_der_pn} as follows:} 
    \begin{equation}\label{Glk_pn_der_est_1}\begin{split}
        &\int_\Omega\left| p_j^M(t) \right|\left|\Delta\phi_j^M(t)\right||w_j| \, \mathrm{d}x \, \mathrm{d}y +\int_\Omega\left|\nabla p_j^M(t)\right|\left|\nabla w_j\right| \mathrm{d}x \, \mathrm{d}y \\
        \leq&M\left(\int_\Omega\left| p_j^M (t)\right|^2 \mathrm{d}x \, \mathrm{d}y \right)^{1/2}\left(\int_\Omega|w_j|^2 \mathrm{d}x \, \mathrm{d}y \right)^{1/2}
        +\left(\int_\Omega\left|\nabla p_j^M(t) \right|^2 \mathrm{d}x \, \mathrm{d}y \right)^{1/2}\left(\int_\Omega|\nabla w_j|^2 \mathrm{d}x \, \mathrm{d}y \right)^{1/2}\\
        \leq&2M\left(\int_\Omega\left(\left| p_j^M (t)\right|^2+\left|\nabla p_j^M(t) \right|^2\right) \mathrm{d}x \, \mathrm{d}y \right)^{1/2}\|w_j\|_{H^1(\Omega)},
    \end{split}\end{equation}{
    \begin{equation}\label{Glk_pn_der_est_2}\begin{split}
        &\int_\Omega\left(|w_j|\left|\nabla p_j^M(t)\right|+\left|p_j^M(t)\right||\nabla w_j|\right)\left|\nabla \phi_j^M\right| \mathrm{d}x \, \mathrm{d}y \\
        \leq&\esssup_{t\in(0, T_1)}\left(\int_\Omega\left|\nabla\phi_j^M(t)\right|^6 \mathrm{d}x \, \mathrm{d}y \right)^{1/6}\left(\int_\Omega\left|w_j\right|^{3} \mathrm{d}x \, \mathrm{d}y \right)^{1/3}\left(\int_\Omega\left|\nabla p_j^M(t)\right|^2 \mathrm{d}x \, \mathrm{d}y \right)^{1/2}\\
        &+\esssup_{t\in(0, T_1)}\left(\int_\Omega\left|\nabla\phi_j^M(t)\right|^6 \mathrm{d}x \, \mathrm{d}y \right)^{1/6}\left(\int_\Omega\left|p_j(t)\right|^{3} \mathrm{d}x \, \mathrm{d}y \right)^{1/3}\left(\int_\Omega\left|\nabla w_j\right|^2 \mathrm{d}x \, \mathrm{d}y \right)^{1/2}\\
        \leq&2c_0\left(M/\epsilon_0+\|\phi_D\|_{H^2(\Omega)}\right)\left(\int_\Omega\left|\nabla p_j^M(t)\right|^2 \mathrm{d}x \, \mathrm{d}y \right)^{1/2}\|w_j\|_{H^1(\Omega)}
    \end{split}\end{equation}
    and
    \begin{equation}\label{Glk_pn_der_est_3}\begin{split}
        &\int_\Omega \left(\left|F\left(n_j^M(t), \left|\nabla\phi_j^M(t)\right|\right)\right||w_j|+\left|\nabla p_j^M(t)\right|\left|\mathbf{v}(t)\right||w_j|\right) \mathrm{d}x \, \mathrm{d}y \\
        \leq&\mu_-(\alpha_1+\eta_0)\esssup_{t\in(0, T_1)}\left(\int_\Omega\left|\nabla\phi_j^M(t)\right|^6 \mathrm{d}x \, \mathrm{d}y \right)^{1/6}\left(\int_\Omega\left|w_j\right|^{3} \mathrm{d}x \, \mathrm{d}y \right)^{1/3}
        \left(\int_\Omega\left|n_j^M(t)\right|^2 \mathrm{d}x \, \mathrm{d}y \right)^{1/2}\\
        &+\mu_-(\alpha_1+\eta_0)\esssup_{t\in(0, T_1)}\left(\int_\Omega\left|\mathbf{v}(t)\right|^6 \mathrm{d}x \, \mathrm{d}y \right)^{1/6}\left(\int_\Omega\left|w_j\right|^{3} \mathrm{d}x \, \mathrm{d}y \right)^{1/3}
        \left(\int_\Omega\left|\nabla p_j^M(t)\right|^2 \mathrm{d}x \, \mathrm{d}y \right)^{1/2}\\
        \leq&c_0\mu_-(\alpha_1+\eta_0)\left((M/\epsilon_0)+\|\phi_D\|_{H^2(\Omega)}+\esssup_{t\in(0, T_1)}\left\|\mathbf{v}(t)\right\|_{H^2(\Omega)}\right)\\
        &\times\left(\left(\int_\Omega\left|n_j^M(t)\right|^2 \mathrm{d}x \, \mathrm{d}y \right)^{1/2}+\left(\int_\Omega\left|\nabla p_j^M(t)\right|^2 \mathrm{d}x \, \mathrm{d}y \right)^{1/2}\right)\|w_j\|_{H^1(\Omega)}
    \end{split}\end{equation}}
    for all $t\in(0, T)$. Estimates \eqref{Glk_pn_der_est_1}-\eqref{Glk_pn_der_est_3} 
    and the bounds \eqref{Glk_apriori_est_pn} of $p_j^M$ implies ${\partial p_j^M}/{\partial t}$ 
    is bounded in  $L^2\left((0, T); H^{-1}(\Omega)\right)$. 
    The same argument is applied to $\partial n_j^M/\partial t$. 
    This concludes the proof of \eqref{Glk_apriori_est_pn_der}.

    According to estimates \eqref{Glk_apriori_est_pn} and \eqref{Glk_apriori_est_pn_der}, 
    we find that the solution to the elliptic equation \eqref{Glk_ellip_eq} satisfies 
    the following estimate which is similar to \eqref{Aux_ellip_est_wrt_M}:
    \begin{equation}\label{Glk_elliptic_est}\left\|\phi_j^M\right\|_{L^\infty\left((0, T); 
    H^2(\Omega)\right)}+\left\|\frac{\partial\phi_j^M}{\partial t}\right\|_{L^2\left((0, T); 
    H^1(\Omega)\right)}\leq C . \end{equation}
    Analogous to estimate \eqref{Aux_opt_est1} from Corollary \ref{Cor_Aux_der_est}, 
    and by the same reasoning, we have 
    \begin{equation}\label{Glk_opt_est_pn}
        \left\|p_j^M\right\|_{L^\sigma(\Omega\times (0, T))},\quad 
        \left\|n_j^M\right\|_{L^\sigma(\Omega\times (0, T))}\leq C,\quad\sigma\in[2, \infty),
    \end{equation}
    where $C$ is a constant independent of $j$.
    
\medskip

\noindent
\textbf{Step 3. Passage to the Limit and Existence of a Solution to the Auxiliary Problem }

\medskip

We note that the compactness argument in Section \ref{mainthmproof_exist} is 
still valid for $\phi_j^M$, $p^M_j$ and $n_j^M$ which satisfy \eqref{Glk_elliptic_est}, \eqref{Glk_apriori_est_pn}, \eqref{Glk_apriori_est_pn_der} and \eqref{Glk_opt_est_pn}. 

It follows from estimates \eqref{Glk_elliptic_est} for $\phi_j^M$,  \eqref{Glk_apriori_est_pn}, \eqref{Glk_apriori_est_pn_der} and \eqref{Glk_opt_est_pn} for $p_j^M$ and $n_j^M$ that, as $j\to\infty$, the function $\phi^M_j$ is bounded in $L^\infty\left((0, T); H^2(\Omega)\right)$, 
the functions $p^M_j$, $n^M_j$ are bounded in the norm of $L^\infty\left((0, T); L^2(\Omega)\right)\cap L^2\left((0, T); H^1(\Omega)\right)$ and $\partial p_j^M/\partial t$, $\partial n_j^M/\partial t$ are bounded in $L^2\left((0, T); H^{-1}(\Omega)\right).$  

Thus, there exists a subsequence, still denoted by $\left(\phi_j^M, p_j^M, n_j^M\right)$, such that:
\begin{equation}\label{Glk_cvg_norm}
	\begin{split}
    \phi_j^M\to \phi^M \quad&\text{$\text{weakly}^*$  in }
		 L^\infty\left((0, T); H^2(\Omega)\right);\\
    p_j^M \to p^M,\quad n_j^M\to n^M \quad&\text{$\text{weakly}^*$  in }
		 L^\infty\left((0, T); L^2(\Omega)\right);\\
		 p_j^M \to p^M,\quad n_j^M\to n^M \quad&\text{weakly  in }
		 L^2\left((0, T); H^1(\Omega)\right);\\
         p_j^M \to p^M,\quad n_j^M\to n^M \quad&\text{weakly  in }
		 L^\sigma\left(\Omega\times(0, T)\right),\quad \forall~\sigma\in[2, \infty);
	\end{split}
\end{equation}
\begin{equation}\label{Glk_cvg_deriv}
	\frac{\partial p_j^M}{\partial t}\to\frac{\partial p^M}{\partial t},\quad \frac{\partial n_j^M}{\partial t}\to \frac{\partial n^M}{\partial t}\quad\text{weakly in}\quad L^2\left((0, T_1); H^{-1}(\Omega)\right);
\end{equation}
\begin{equation}\label{Glk_cvg_strong}
	p_j^M \to p^M,\quad n_j^M\to n^M \quad \text{strongly  in }
		 L^2\left(\Omega\times(0, T)\right) ;
\end{equation}
as $j\to\infty$ (c.f.  Lemma \ref{comp2} (Aubin-Lions Lemma), \cite{lions1969quelques} (Lemma 1.3 from Sec.1.4 of Chap.1 and  Theorem 5.1 from Sec.5.2 of Chap.1), \cite{lions1983contrôle} and \cite{naumann1995existence}).

We multiply  both sides of \eqref{Glk_ellip_eq} by $\phi_j^M(t)-\phi^M(t)$, 
then integration by parts leads to 
\[\int_\Omega\left(\nabla\phi_j^M(t)\cdot\nabla\left(\phi_j^M(t)-\phi^M(t)\right)\right) 
\I x \I y \I s =\int_\Omega\left(p_j^M(t)-n_j^M(t)\right)
\left(\phi_j^M(t)-\phi^M(t)\right) \I x \I y \I s ,\]
by using the above convergence properties \eqref{Glk_cvg_norm}-\eqref{Glk_cvg_strong}, 
and then we obtain 
\[\nabla\phi_j^M\to\nabla\phi^M \quad \text{strongly  in }\quad
		 L^2\left(\Omega\times(0, T)\right) \quad\text{as}\quad j\to\infty.\]
Analogously, \eqref{Glk_Aprxm_eq_p} implies 
\[\begin{split}
    &\epsilon_+\int_0^{T}\int_\Omega\left|\nabla\left(p_j^M-p^M\right)\right|^2 \I x \I y \I s \\
    \leq&-\int_0^{T}\int_\Omega\left(\frac{\partial p_j^M}{\partial s}\left(p_j^M-p^M\right)\right) \I x \I y \I s -\epsilon_+\int_0^{T}\int_\Omega\nabla p^M\cdot\nabla\left(p_j^M-p^M\right) \I x \I y \I s \\
    &+\int_{0}^{T}\int_\Omega\left(\frac{\mu_+}{2}p_j^M\Delta\phi_j^M+\mu_-\left(\alpha_1+\eta_0\right)|n_j^M||\nabla\phi_j^M|\right)|p_j^M-p^M| \I x \I y \I t \\
&+\frac{\mu_+}{2}\int_{0}^{T}\int_\Omega\left\{\left(|p_j^M-p^M|\nabla p_j^M-p_j^M\nabla (p_j^M-p^M)\right)\cdot\nabla\phi_j^M\right\} \I x \I y \I t \\
\to&~ 0 \quad\text{as}~j\to\infty.
\end{split}\]
According to \eqref{Aux_intform_n}, we obtain by the same reasoning: 
\[\nabla n_j^M\to\nabla n^M\quad\text{strongly in}\quad L^2(\Omega\times(0, T)).\]
 Without loss of generality we can assume that the sequences  $\left\{\phi_j^M\right\}$, 
 $\left\{\nabla\phi_j^M\right\}$, $\left\{p_j^M\right\}$, 
 $\left\{n_j^M\right\}$, $\left\{\nabla p_j^M\right\}$ 
 $\left\{\nabla n_j^M\right\}$ converge a.e. in $\Omega\times(0, T)$ 
 and $\partial\Omega\times(0, T)$. 
 Then the passage to the limit $j\to\infty$ in identities 
 \eqref{Glk_Aprxm_eq_p} and \eqref{Glk_Aprxm_eq_n} gives the weak formulations 
 \eqref{Aux_intform_p} and \eqref{Aux_intform_n} of parabolic equations \eqref{Aux_parab_eq_pos} and \eqref{Aux_parab_eq_neg} respectively. The passage to the limit $j\to\infty$ in equation \eqref{Glk_ellip_eq} leads to equation \eqref{Aux_ellip_eq} in $L^2(\Omega)$. 

From estimates \eqref{Glk_apriori_est_pn}, \eqref{Glk_apriori_est_pn_der} and convergence result \eqref{Glk_cvg_norm}, it follows that 
\[p_j^M(0)\to p^M(0),\quad n_j^M(0)\to n^M(0)\quad\text{weakly in}\quad L^2(\Omega).\]
This together with \eqref{Glk_inival} implies the initial  condition \eqref{Aux_inival}. 
{
The mixed Dirichlet-Neumann boundary conditions \eqref{Aux_Dirichletbdy}-\eqref{Aux_Neumannbdy} follows from the passage to the limit $j\to\infty$ in \eqref{Glk_p_series}, \eqref{Glk_n_series}, \eqref{Glk_Aprxm_eq_p}, \eqref{Glk_Aprxm_eq_n} and \eqref{Glk_ellip_eq}.
}

 This concludes the proof of the existence of a weak solution to the elliptic-parabolic system \eqref{Aux_EPSys},
and the proof of Theorem  \ref{Exist_Aux_Pro} is complete.
%%%%%%%%%%%%%%%%%%%%%%%%%%%%%%%%%%%

	\bibliographystyle{abbrv}
	\bibliography{bibliography}

%%%%%%%%%%%%%%%%%%%%%%%%%%%%%%%%%%%
\end{document}